\newcommand{\reals}{\mathbb{R}}
\newcommand{\N}{\mathbb{N}}
\newcommand{\cA}{{\mathcal{A}}}
\newcommand{\cC}{{\mathcal{C}}}
\newcommand{\cR}{\mathcal{R}}
\newcommand{\cS}{\mathcal{S}}
\newcommand{\R}{\mathbb{R}}
\newcommand{\cB}{\mathcal{B}}
\newcommand{\bu}{\mathbf u}
\newcommand{\bU}{\mathbf U}
\newcommand{\bv}{\mathbf v}
\newcommand{\bq}{\mathbf q}
\newcommand{\bw}{\mathbf w}
\newcommand{\bF}{\mathbf F}
\newcommand{\bD}{\mathbf D}
\newcommand{\be}{\mathbf e}
\newcommand{\bftau}{\boldsymbol{\tau}}
\newcommand{\bfsigma}{\boldsymbol{\sigma}}
\newcommand{\bn}{\boldsymbol{n}}
\newcommand{\bE}{\mathbf{E}}
\newcommand{\grad}{\nabla}
\newcommand{\Om}{\Omega}
\newcommand{\eps}{\epsilon}
\def\ltt{\lesssim}
\newcommand{\dt}{{\Delta t}}
\newcommand{\Th}{{\mathcal{T}_h}}
\DeclareMathOperator{\dist}{dist}
\DeclareMathOperator{\signum}{sgn}
\DeclareMathOperator{\sdist}{Sdist}
\theoremstyle{plain}
\newtheorem{theorem}{Theorem}[section]
\newtheorem{lemma}[theorem]{Lemma}
\newtheorem{proposition}[theorem]{Proposition}
\newtheorem{definition}{Definition}
\theoremstyle{remark}
\newtheorem{remark}{Remark}[section]
\begin{document}

\title{Analysis of a diffuse interface method for the Stokes-Darcy coupled problem}

\author{Martina Buka{\v{c}}}\address{Department of Applied and Computational Mathematics and Statistics, University of Notre Dame, South Bend, IN 46556, USA. Email: \texttt{mbukac@nd.edu}.}

\author{Boris Muha}\address{Department of Mathematics, Faculty of Science, University of Zagreb, Croatia. Email: \texttt{borism@math.hr}.}

\author{Abner J. Salgado}\address{Department of Mathematics, University of Tennessee, Knoxville TN 37996 USA. Email: \texttt{asalgad1@utk.edu}.}


\subjclass{65M12; 65M15.}

\keywords{Stokes-Darcy; diffuse interface method; error analysis; rate of convergence.}

\begin{abstract}
We consider the interaction between a free flowing fluid and a porous medium flow, where the free flowing fluid is described using the time dependent Stokes equations, and the porous medium flow is described using  Darcy's law in the primal formulation. To solve this problem numerically, we use a diffuse interface approach, where the weak form of the coupled problem is written on an extended domain which contains both Stokes and Darcy regions. This is achieved  using a phase-field function which equals one in the Stokes region and zero in the Darcy region, and smoothly transitions between these two values on a diffuse region of width $\mathcal{O}(\epsilon)$ around the Stokes-Darcy interface. We prove  convergence of the diffuse interface formulation to the standard, sharp interface formulation, and derive rates of convergence. This is performed by deriving a priori error estimates for discretizations of the diffuse interface method, and by analyzing the modeling error of the diffuse interface approach at the continuous level.  The convergence rates are also shown computationally in a  numerical example.
\end{abstract}

\maketitle

\section{Introduction}
The interaction between a free flowing fluid and a porous medium flow, commonly formulated as a Stokes-Darcy  coupled system, has been used to describe problems arising in many applications, including environmental sciences, hydrology, petroleum engineering and biomedical engineering. Hence, the development of numerical methods for  Stokes-Darcy problems has been an active area of research.  Most  existing numerical methods are based on a sharp interface approach, in the sense that the interface between the Stokes and Darcy regions is parametrized using an exact specification of its geometry and location, and the nodes in the computational mesh align with the interface. They include both monolithic and partitioned numerical methods. Some of the recently developed monolithic schemes include a two-grid method with backtracking for the stationary monolithic Stokes-Darcy problem  proposed in~\cite{dutwo} and a mortar multiscale finite element method presented  in~\cite{girault2014mortar}. We also mention  the work in~\cite{burman2007unified,yu2020nitsche}  based on the Nitsche's penalty method.  Non-iterative partitioned schemes  based on various time-discretization strategies were developed  in~\cite{layton2012long,kubacki2015analysis,layton2013analysis,layton2012stability,chen2013efficient,gunzburger2018stokes}. A third-order in time implicit-explicit algorithm based on the combination of the Adams–Moulton and the Adams--Bashforth scheme  was proposed in~\cite{chen2016efficient}, and iterative domain decomposition methods based on generalized Robin coupling conditions  were derived  in~\cite{discacciati2007robin,discacciati2018optimized}.

While the sharp interface methods are widely used, the explicit interface parametrization may be difficult to obtain in case of complex geometries.  The exact location is sometimes  not known,  or the  geometry is complicated, making a proper approximation of the integrals error-prone and difficult to automate. This often occurs when geometries are obtained implicitly using imaging data, commonly used in patient-specific biomedical simulations. 
Hence, as one alternative to  sharp interface approaches, diffuse interface methods have been introduced~\cite{bueno2006spectral,ratz2006pde,li2009solving,lervag2009analysis,burger2017analysis,anderson1998diffuse,du2020phase}. They are also known as phase-field, or diffuse domain methods. While they have features in common with the level set method~\cite{chen1991uniqueness,osher1988fronts,osher2003level,pacquaut2012combining}, they are  fundamentally different since the  level set method tracks the exact sharp interface
without introducing any diffuse layers~\cite{du2020phase}.  Other conceptually similar approaches include the fictitious domain method with a spread interface~\cite{ramiere2007fictitious, ramiere2007general}, the immersed boundary method with
interface forcing functions based on Dirac distributions~\cite{peskin2002immersed,griffith2005order} and the fat boundary method~\cite{maury2001fat,bertoluzza2011analysis}, among others.
The diffuse interface method has received strong attention from the applications point of view~\cite{stoter2017diffuse,teigen2009diffuse,elliott2011numerical,aland2010two,teigen2011diffuse,miehe2010phase,borden2012phase,mikelic2015phase,gomez2013three,liu2015liquid,saylor2016diffuse}. However,  many difficult questions remain to be theoretically addressed both
at the {continuous} and the discrete level. One of the fundamental mathematical questions, whether the diffuse interface  converges to a sharp interface  when the width of the diffuse interface tends to zero, remains an open problem for
many phase-field models. 

Theoretically, the diffuse  interface approach has been widely studied for elliptic problems and two-phase flow problems. 
An analysis of the diffuse interface method for elliptic problems has been provided in~\cite{burger2017analysis,burger2015diffuse,schlottbom2016error,franz2012note,li2009solving}. An elliptic problem was also considered in~\cite{nguyen2018diffuse}, where  diffuse formulations of Nitsche's method for imposing Dirichlet boundary conditions on phase-field approximations of sharp domains were studied. A Cahn-Larch\'e phase-field model approximating an elasticity sharp interface problem has been considered in~\cite{abels2015sharp}. Advection--diffusion on evolving diffuse interfaces has been studied in~\cite{elliott2011numerical,teigen2009diffuse}. We also mention the work in~\cite{gao2018decoupled} where the interaction between the two-phase free flow and two-phase porous media flow was considered, but the diffuse interface method was used to separate different phases in each region, while the Stokes-Darcy interface was captured by a mesh aligned with the interface and the two problems were decoupled  using a classical approach.  Diffuse interface models for two-phase flows have been extensively analyzed, see, e.g.,~\cite{abels2009diffuse,abels2014sharp,feireisl2010analysis,abels2017sharp,guo2021diffuse,ray2021discontinuous,yang2019diffuse}. However, only recently a rigorous sharp interface limit convergence result with convergence rates in strong norms has been proved~\cite{abels2018sharp}. 
The main challenges in the analysis of the diffuse interface problems involve proving the convergence of the diffuse interface to the sharp
interface, and the estimation of the modeling error, which can be quite difficult to analyze and
remain open for many problems.

In this work, we consider the  fluid--porous medium interaction  described by the time--dependent Stokes--Darcy coupled problem. We discretize the problem in time using the backward Euler method, and in space using the finite element method. The focus of this paper is  the convergence analysis of the diffuse interface to the sharp interface as the width of the diffuse layer goes to zero. This is performed in two steps. First, we 
derive error estimates and prove the convergence for the finite element approximation of the diffuse interface method. Then, we analyze the convergence of the continuous diffuse interface formulation to the continuous sharp interface formulation, obtaining the convergence rates with respect to the width of the diffuse layer. These rates of convergence are also explored numerically.


In the diffuse interface approach, the integrals contain some form of the phase-field function, which can be thought of being a weight and naturally introduces the framework of weighted Sobolev spaces. 
Hence, the analysis in this work relies on the results  from~\cite{burger2017analysis} on the convergence of the diffuse integrals, and on a trace lemma, which states that the trace operator for weighted Sobolev spaces is uniformly bounded for certain perturbations of the domain. A variety of results for weighted spaces with  Muckenhoupt weights~\cite{MR775568} are used as well. The main challenge comes from the fact that the weights used in~\cite{burger2017analysis} do not belong to a  Muckenhoupt  class, and that the weights from the Muckenhoupt  class used in this work ($A_2$, as defined in Section~\ref{sec:Notation}) are not necessarily Lipschitz. 
Therefore, the phase-field functions used in our analysis had to be carefully constructed. In addition, due to the setting of the problem, a special attention had to be given to extend both the continuous and the discrete inf-sup conditions in order to construct the pressure in the correct space.

The rest of this paper is organized as follows. Section~\ref{sec:Notation} introduces notation and some background information that will prove useful in our developments. The mathematical models, together with their well-posedness, are described in Section~\ref{sec:Models}. The numerical approximation of the diffuse interface formulation is developed in Section~\ref{sec:Discretization}, where we develop and analyze a fully discrete scheme for this problem. The so-called modeling error is the topic of Section~\ref{Sec_ModellinError} where we derive modeling error estimates under two scenarios: the case when the phase-field function remains strictly positive, and that when it vanishes. Finally, several numerical illustrations that not only show our proved error estimates, but also demonstrate the flexibility of our diffuse interface approach are presented in Section~\ref{Sec:numerics}. 

\section{Notation and preliminaries}\label{sec:Notation} 

Let $\Omega \subset \mathbb{R}^d$, with $d=2,3$ be a bounded domain with Lipschitz boundary. When dealing with discretization, we shall also assume that this is a polytope. This will be used to denote the domain where the fluid and porous medium interaction takes place. Let $a\lesssim (\gtrsim) b$ denote that there exists a positive constant $C$, independent of discretization or phase-field parameters, such that
$a \leq (\geq) C b$.

We will mostly adhere to standard notation and terminology with regards to function spaces and their properties. Vector and matrix valued functions will be denoted using boldface. For a Banach space $X$ we denote its dual by $X^*$, and the duality pairing is $\langle \cdot,\cdot \rangle_{X}$. We may depart from standard notation in that we will make use of weighted spaces and their properties.

We say that an almost everywhere positive function $\omega \in L^1_{loc}(\mathbb{R}^d)$ is a weight. Every weight, $\omega$, induces a measure with density $\omega dx$, over the Borel sets of $\mathbb{R}^d$ which, for simplicity, will also be denoted by $\omega$. In other words, for $E \subset \mathbb{R}^d$ a Borel set, we let $\omega(E) = \int_E \omega$.

For $r\in (1, \infty)$, $\omega$ a weight, and $D \subset \mathbb{R}^d$ a bounded domain, we define weighted Lebesgue spaces and their norms
\[
  L^r (D, \omega) = \left\{ \psi : D \rightarrow \mathbb{R} \; : \; |\psi|^r \omega \in L^1(D) \right\},
  \qquad
  \| \psi \|_{L^r(D,\omega)}^r = \int_D |\psi|^r \omega.
\]
These spaces are complete. Associated with weighted $L^r-$spaces, we define the weighted Sobolev spaces:
\[
  W^{k,r}  (D, \omega)= \{ \psi \in L^r (D, \omega) \; : \; D^{\alpha} \psi \in L^r(D, \omega), \; \forall \alpha \in \mathbb{N}_0^d : |\alpha| \leq k\},
  \quad
    \|\phi\|_{W^{k,r}(D,\omega)}^r=\sum_{|\alpha|\leq k}\|D^{\alpha}\psi\|^r_{L^r(D,\omega)}.
\]
As usual, we set $H^k(D,\omega) = W^{k,2}(D,\omega)$ for any $k \in \mathbb{N}_0$. In general, one needs caution when defining weighted Sobolev spaces for general weights. For instance, for some weights, we may not have that $L^r(D,\omega) \subset L^1_{loc}(D)$, so that defining weak derivatives may be an issue. Even when this is possible, the ensuing spaces may not be complete \cite{MR775568}. For this reason, we shall always assume that the weight $\omega$ belongs to the Muckenhoupt class $A_r$ \cite{nochetto2016piecewise}. We will only make use of the case $r=2$, so we only define the $A_2$--class. We say that $\omega \in A_2$ if
\[
  [\omega]_{A_2} = \sup_B \left( \frac1{|B|} \int_B \omega \right)\left( \frac1{|B|} \int_B \frac1{\omega} \right) < \infty,
\]
where the supremum is taken over all balls in $\mathbb{R}^d$. If $\omega \in A_2$, then $H^k(D,\omega)$ enjoys most of the properties that its classical unweighted counterpart has. For instance, it is Hilbert and separable; see \cite{MR775568,nochetto2016piecewise}. Below we list some properties of $H^1(D,\omega)$, with $\omega \in A_2$, that shall be useful to us in our developments.
\begin{enumerate}[$\bullet$]
  \item \emph{Traces}: Let $\Gamma \subset \partial D$ have positive and finite $(d-1)$--Hausdorff measure. Then, there is a continuous trace operator $\gamma_\Gamma \colon H^1(D,\omega) \to L^1(\Gamma)$; see \cite{MR3227013,MR3479981}. Recall that this, in particular, implies that subspaces of $H^1(D,\omega)$ consisting of functions that vanish on a piece of its boundary are closed, and hence Hilbert themselves.
  
  In light of what is said above, it is legitimate to define $H^1_0(D,\omega)$ as the subset of $H^1(D,\omega)$ consisting of functions whose trace, onto $\partial D$, vanishes.
  
  \item \emph{Poincar\'e--Friedrichs--type inequalities}: Let $\Gamma$ be as above. There is a constant $C_P>0$ such that, for all $v \in H^1(D,\omega)$, we have
  \begin{equation}
  \label{eq:WeightedPoincare}
    \| v \|_{H^1(D,\omega)}^2 \leq C_P \left( \left| \int_\Gamma v \right| + \| \nabla v \|_{L^2(D,\omega)^d} \right)^2;
  \end{equation}
  see \cite{nochetto2016piecewise} and references therein.
  
  \item \emph{Bogovski\u\i{} operator}: There is $\beta >0$ such that, for all $\psi \in L^2(D,\omega^{-1})$ with $\int_D \psi = 0$, we have 
  \begin{equation}
  \label{eq:Bogovskii}
    \beta \| \psi \|_{L^2(D,\omega^{-1})} \leq \sup_{\boldsymbol0 \neq \bv \in H^1_0(D,\omega)^d} \frac{\int_D \psi \nabla\cdot \bv}{ \|\nabla \bv \|_{L^2(D,\omega)^{d \times d}}},
  \end{equation}
  see \cite[Lemma 6.1]{duran2020stability}.

  \item \emph{Korn's inequality}: For $\bv \in H^1(D,\omega)^d$ we denote by $\bD(\bv) = \tfrac12 \left( \nabla \bv + \nabla \bv \right)^T$ its symmetric gradient. There is a constant $\bar{C}_K>0$ such that
  \begin{equation}
  \label{eq:wKorn1}
    \| \nabla \bv \|_{L^2(D,\omega)^{d \times d}} \leq \bar{C}_K \left( \| \bv \|_{L^2(D,\omega)^d} + \| \bD(\bv) \|_{L^2(D,\omega)^{d \times d}} \right), \quad \forall \bv \in H^1(D,\omega)^d.
  \end{equation}
  One can easily then mimic classical arguments, for instance \cite[Theorem 7.3.2]{MR969367}, to conclude that there is a constant $C_K>0$ such that for all $\bv \in H^1(D,\omega)^d$ that vanish on $\Gamma$, we have
  \begin{equation}
  \label{eq:weightedKorn}
    \| \bv \|_{H^1(D,\omega)^d}^2 \leq C_K \| \bD(\bv) \|_{L^2(D,\omega)^{d \times d}}^2.
  \end{equation}
\end{enumerate}
The constants in all the statements above depend on $\omega$ only through $[\omega]_{A_2}$.

Finally, we mention a prototypical example of a weight that belongs to $A_2$. It has been shown in \cite{MR3215609} and \cite[Lemma 2.3(vi)]{MR1601373} that if $\Gamma \subset \mathbb{R}^d$ is an embedded hypersurface (i.e., a curve if $d=2$ and a surface if $d=3$), then
\begin{equation}
\label{eq:distisA2}
  \omega(x) = \dist_\Gamma(x)^\alpha \in A_2,
\end{equation}
if and only if $\alpha \in (-1,1)$. Here $\dist_\Gamma$ is the distance function to $\Gamma$. If $\Gamma \subset \partial D$, \cite{MR1258430} has provided a finer characterization of the trace operator $\gamma_\Gamma$; namely,
\begin{equation}
\label{eq:TraceIsL2}
  \gamma_\Gamma \left( H^1(D,\dist_\Gamma^\alpha) \right) \hookrightarrow L^2(\Gamma).
\end{equation}

\section{The mathematical models}\label{sec:Models}

Let us now present our models of interest, and discuss their most important properties. We recall that $\Omega \subset \mathbb{R}^d$ denotes an open bounded domain with Lipschitz boundary. This is the domain where the interaction between fluid and porous medium shall take place. $T>0$ is the final time.

\subsection{The sharp interface model}\label{sub:SharpModel}

We begin by describing the sharp interface model, and studying its well-posedness.
Let  $\Omega_F$ denote the fluid domain and $\Omega_D$ denote the porous medium domain. We assume that $\Omega_F$ and $\Omega_D$ are bounded domains in $\mathbb{R}^d$, with $d=2,3$, that have Lipschitz boundary. Moreover, $\Omega_F \cap \Omega_D = \emptyset$, $\bar\Omega = \bar\Omega_F \cup \bar\Omega_D$, and $\Gamma$ is the common boundary between these domains, i.e., $\partial \bar{\Omega}_F \cap \partial \bar{\Omega}_D=\Gamma$. We assume that $\Gamma$ is sufficiently smooth, and that it has positive and finite $(d-1)$--Hausdorff measure $\mathcal{H}_{d-1}(\Gamma)$.

To model the fluid flow, we use the time--dependent Stokes equations, given as follows:
\begin{align}
  \label{NS}
    \rho \partial_t\bu =\nabla\cdot\bfsigma(\bu, \pi)+\rho \bF, \qquad{\rm in}\; \Omega_F \times (0,T), \\
  \label{Incomp}
    \nabla\cdot\bu=0,\qquad{\rm in}\;\Omega_F\times (0,T),
\end{align}
where $\bf{u}$ is the fluid velocity, $\rho>0$ is the, constant, fluid density, $\bfsigma(\bu,\pi)=2\mu \bD(\bu)-\pi \mathbb{I}$ is the fluid Cauchy stress tensor, $\mu$ is the fluid viscosity, $\pi$ denotes the pressure and $\bF$ is the density of volumetric forces.

The porous medium flow is governed by  Darcy's law, written in the primal formulation as:
\begin{align}
\label{DarcyPrimal}
  c_0\partial_t p-\nabla\cdot(\boldsymbol\kappa\nabla p)=g, \qquad{\rm in}\;\Omega_D\times (0,T),
\end{align}
where $p$ is the Darcy pressure, $c_0$ is the mass storativity and $\boldsymbol \kappa$  is the hydraulic conductivity tensor. 
We assume that $\boldsymbol \kappa$ is uniformly bounded and positive definite, so that
$
0< k_* \leq \lambda \leq k^*,
$
where $ \lambda  \in \lambda (\boldsymbol \kappa)$, and $\lambda(\boldsymbol \kappa)$ is the spectrum of $\boldsymbol \kappa$. We note that the porous medium flow velocity can be computed from $p$ as $\bq = - \boldsymbol\kappa \nabla p$.

\noindent 
\textbf{Coupling conditions:} The Stokes and Darcy problems are coupled using the following interface conditions:
\begin{enumerate}[$\bullet$]
  \item The conservation of mass:
  \[
    \bu\cdot\bn=- \boldsymbol\kappa \nabla p \cdot\bn, \qquad {\rm{on}}\; \Gamma \times(0,T),    
  \]
  where $\bn$ is the unit normal to $\Gamma$ which points towards $\Omega_D$.

  \item The Beavers-Joseph-Saffman condition:
  \[
    \alpha_{BJ}\bu\cdot\bftau_{i}+\bfsigma(\bu,\pi) \bn \cdot\bftau_{i}=0, \qquad  i=1,2, \dots, d-1, \qquad{\rm on}\; \Gamma  \times(0,T),    
  \]
  where, for every point in $x\in \Gamma$, $\boldsymbol{\tau}_{i}$, $i=1,\ldots,d-1$, is an orthonormal basis for the tangent space $T_x(\Gamma)$; and $\alpha_{BJ} >0$ is the Beavers-Joseph-Saffman-Jones coefficient.

  \item The balance of pressure:
  \[
    -\bfsigma(\bu,\pi)\bn \cdot\bn =p, \qquad{\rm on}\; \Gamma \times (0,T).    
  \]
\end{enumerate}

%

%
%

\noindent
\textbf{Boundary conditions:}
We split the boundaries as: $\partial\Omega_F=\Gamma\cup\Gamma_F^1\cup\Gamma_F^2$, $\partial\Omega_D=\Gamma\cup\Gamma_D^1\cup\Gamma_D^2$, and prescribe the following boundary conditions:
\begin{align}
  \bu&=\boldsymbol0\quad {\rm on}\quad \Gamma_F^1\times (0,T),
  &
 \label{StokesBC}
  \bfsigma(\bu,\pi) \bn&=\boldsymbol0\quad{\rm on}\quad \Gamma_F^2\times (0,T),
  \\
  p&=0\quad {\rm on}\quad \Gamma_D^1\times (0,T),
  &
 \label{DarcyBC}
  \partial_n p&=0 \quad {\rm on}\quad \Gamma_D^2\times (0,T).
\end{align}

\begin{remark}[Boundary conditions]
In this paper, for simplicity, we consider only homogeneous boundary conditions. However, inhomogeneous boundary conditions can be easily included in the analysis by using the appropriate extension operators to homogenize the boundary conditions.
\end{remark}

\noindent
\textbf{Initial conditions:} Finally, we supplement the problem with the following initial conditions:
\[
  \bu(\cdot,0) = \bu_0, \quad {\rm in } \; \Omega_F, \qquad p(\cdot,0) = p_0, \quad {\rm in } \; \Omega_D.
\]
Here $u_0$ and $p_0$ are the initial velocity and pressure, respectively.

\subsubsection{Weak formulation}
To accommodate for boundary conditions, we introduce the following function spaces:
\begin{align*}
&\mathcal{V}=\{ \bu \in H^1(\Omega_F)^d\; : \; \bu = \boldsymbol0 \; \textrm{on} \; \Gamma_F^1 \},
&\mathcal{X}=\{ \psi \in H^1(\Omega_D) \; : \; \psi =0 \; \textrm{on} \;  \Gamma_D^1 \}.
\end{align*}
The weak solution  to the sharp interface Stokes-Darcy problem is defined as follows. 

\begin{definition}[Weak solution of the sharp interface problem]\label{WS}
The triple $(\bu,p,\pi)$ such that
\begin{align*}
  \bu &\in L^\infty(0,T; L^2(\Omega_F)) \cap L^2(0,T;\mathcal{V}), &\partial_t\bu &\in  L^2(0,T; \mathcal{V}^*),  \\
  p &\in L^\infty(0,T; L^2(\Omega_D)) \cap L^2(0,T;\mathcal{X}), &\partial_t p & \in L^2(0,T; \mathcal{X}^*), \\
  \pi &\in H^{-1}(0,T;L^2(\Omega_F)),
\end{align*}
is a weak solution to our problem if
\[
  \bu(\cdot,0) = \bu_0, \quad {\rm in } \; L^2(\Omega_F), \qquad p(\cdot,0) = p_0, \quad {\rm in } \; L^2(\Omega_D),
\]
and, in addition, for every $(\bv,\psi,\varphi)\in \mathcal{V} \times \mathcal{X} \times L^2(\Omega_F)$,  the following equality is satisfied in $\mathcal{D}'(0,T)$:
\begin{equation}
\label{weakSI}
  \begin{aligned}
    \frac{d}{dt}\left(
        \rho \langle \bu , \bv \rangle_{\mathcal{V}} + c_0 \langle p, \psi \rangle_{\mathcal{X}}
      \right )
    +2\mu\int_{\Omega_F}\bD(\bu):\bD(\bv)+\int_{\Omega_D} \boldsymbol{\kappa}\nabla p \cdot \nabla \psi
    -\int_{\Omega_F}\pi\nabla\cdot\bv +\int_{\Omega_F}\varphi\nabla\cdot\bu
    \\ 
    -\int_{\Gamma} \psi\bu\cdot\bn +\int_{\Gamma} p \bv \cdot \bn 
    +\alpha_{BJ} \sum_{i=1}^{d-1} \int_{\Gamma}(\bu \cdot\bftau_i)(\bv\cdot\bftau_i)
    =\rho\int_{\Omega_F}\bF\cdot\bv+\int_{\Omega_D}g\psi.
  \end{aligned}
\end{equation}
\end{definition}

We comment that, owing to the Gelfand triple framework we are adopting, the initial conditions are meaningful as stated; see \cite[Lemma 7.3]{Roubicek2013}. To simplify notation, we introduce
\[
  \mathcal{H} = \left\{ \bv \in L^2(\Omega_F)^d \; : \; \nabla \cdot \bv = 0, \; {\rm in } \; \Omega_F, \; \bv \cdot \bn = 0 \; {\rm on} \; \Gamma_F^1 \right\}.
\]
We recall that the theory of weak solutions for the  Stokes-Darcy system is by now well understood, see, e.g., \cite{discacciati2002mathematical} for the stationary case and \cite{cao2010coupled} for the evolutionary case.

\begin{theorem}[Well-posedness]\label{thm:WellPosedSharpInterface}
For every $(\bu_0,p_0,\bF,g) \in \mathcal{H} \times L^2(\Omega_D) \times L^2(\Omega_F)^d \times L^2(\Omega_D)$ there is a unique solution to our problem in the sense of Definition~\ref{WS}. Moreover, this solution satisfies the energy equality
\begin{multline*}
  \frac{1}{2}\frac{d}{dt}\left(
    \rho\|\bu\|^2_{L^2(\Omega_F)^d} +c_0\|p\|^2_{L^2(\Omega_D)}
  \right)
  +2\mu\|\bD(\bu)\|^2_{L^2(\Omega_F)^{{d \times d}}}
  +\alpha_{BJ} \sum_{i=1}^{d-1} \| \bu \cdot\bftau_i \|^2_{L^2(\Gamma)}
  +\|\boldsymbol \kappa^{\frac12} \nabla p\|^2_{L^2(\Omega_D)^{{d}}}= \\
  \rho\int_{\Omega_F} \bF \cdot \bu  + \int_{\Omega_D} g p.
\end{multline*}
\end{theorem}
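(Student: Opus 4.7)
The plan is to recast the problem as an abstract parabolic evolution on the divergence-free subspace, invoke a standard Galerkin/Lions-type argument there, and then recover the Stokes pressure $\pi$ a posteriori via an inf-sup argument. Set $\mathcal{V}_\sigma = \{\bv \in \mathcal{V} : \nabla\cdot\bv = 0 \text{ in } \Omega_F\}$ and form the product space $\mathcal{W} = \mathcal{V}_\sigma \times \mathcal{X}$ with pivot $\mathcal{H}\times L^2(\Omega_D)$, which yields a Gelfand triple. On $\mathcal{W}$ introduce the bilinear form
\begin{equation*}
  a((\bu,p),(\bv,\psi)) = 2\mu\int_{\Omega_F}\bD(\bu):\bD(\bv) + \int_{\Omega_D}\boldsymbol{\kappa}\nabla p\cdot\nabla\psi + \alpha_{BJ}\sum_{i=1}^{d-1}\int_\Gamma (\bu\cdot\bftau_i)(\bv\cdot\bftau_i) - \int_\Gamma \psi\,\bu\cdot\bn + \int_\Gamma p\,\bv\cdot\bn.
\end{equation*}
The crucial algebraic observation is that the two interface coupling terms are skew-symmetric, so they disappear when testing with $(\bu,p)$. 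Continuity of $a$ follows from $H^{1/2}(\Gamma)$ trace inequalities for $H^1(\Omega_F)^d$ and $H^1(\Omega_D)$, while coercivity on $\mathcal{W}$ (modulo a zero-order perturbation absorbed by the mass term) follows from classical Korn's inequality on $\Omega_F$, combined with the hypothesis $\boldsymbol{\kappa} \ge k_\star I$ and a Poincar\'e inequality on $\mathcal{X}$.

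With this abstract setup in place, I would construct a Galerkin sequence using, say, a basis of eigenfunctions of the elliptic operator associated with $a$ on $\mathcal{W}$. Testing with the Galerkin solution itself, the skew-symmetric coupling cancels and gives the a priori estimate
\begin{equation*}
  \tfrac12\tfrac{d}{dt}\bigl(\rho\|\bu\|_{L^2}^2 + c_0\|p\|_{L^2}^2\bigr) + 2\mu\|\bD(\bu)\|_{L^2}^2 + \alpha_{BJ}\sum_i\|\bu\cdot\bftau_i\|_{L^2(\Gamma)}^2 + \|\boldsymbol{\kappa}^{1/2}\nabla p\|_{L^2}^2 = \rho\int_{\Omega_F}\bF\cdot\bu + \int_{\Omega_D} g\,p,
\end{equation*}
which, after Young and Gr\"onwall, gives uniform bounds in $L^\infty(0,T;\mathcal{H}\times L^2(\Omega_D)) \cap L^2(0,T;\mathcal{W})$. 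The bound on time derivatives in $L^2(0,T;\mathcal{W}^*)$ follows directly from the equation. Standard compactness and weak limit passage yield a weak solution in the sense of Definition~\ref{WS} but without the pressure; uniqueness is immediate from linearity and the energy estimate.

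To recover $\pi$, fix $t$ and consider the functional on $\mathcal{V}$ defined by the Stokes equation minus the already-constructed terms; this functional vanishes on $\mathcal{V}_\sigma$ by construction. The continuous inf-sup condition between $\mathcal{V}$ and $L^2(\Omega_F)$ (a standard de Rham/Bogovski\u{\i} type statement, guaranteed by the presence of the Neumann piece $\Gamma_F^2$ which fixes the gauge) then produces a pressure $\pi(t) \in L^2(\Omega_F)$, and tracking the time regularity gives $\pi \in H^{-1}(0,T;L^2(\Omega_F))$. The initial conditions are attained in the $L^2$ sense through the Gelfand-triple continuity $\bu \in C([0,T];\mathcal{H})$, $p \in C([0,T];L^2(\Omega_D))$. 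Finally, the energy equality is obtained by duality-pairing $\partial_t \bu$ with $\bu$ and $\partial_t p$ with $p$, using that the skew coupling terms and the pressure term $\int_{\Omega_F}\pi\,\nabla\cdot\bu$ both vanish. The main obstacle is purely technical: ensuring the interface traces make sense and that the pressure inf-sup is compatible with the mixed Dirichlet/Neumann boundary split, but both are standard once the correct function-space framework is chosen.
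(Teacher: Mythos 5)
Your proposal is correct and follows essentially the same route as the paper: the paper's proof is only a citation to the Stokes--Darcy literature, but the strategy there---and the one the paper itself writes out for the diffuse analogue in Theorem~\ref{WellPosednes}---is exactly yours, namely a Galerkin argument on the divergence-free product space $\mathcal{V}_\sigma\times\mathcal{X}$ exploiting the skew-symmetry of the interface coupling, followed by recovery of $\pi\in H^{-1}(0,T;L^2(\Omega_F))$ via the inf-sup/de Rham argument. No substantive gaps.
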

\begin{proof}
It follows after minor modifications to the arguments of \cite{cao2010coupled}.
\end{proof}

\subsection{The diffuse interface formulation}\label{sub:PF}

\begin{figure}
  \begin{center}
    \begin{tikzpicture}
      \draw[red,very thick] (-5,0) -- (5,0)
        node at (5.25,0) {$\Omega$};
      \draw[red,very thick] (0,-2.5) -- (0,2.5)
        node at (0.2,2.5) {$\Gamma$};
      \draw[red,thick,dotted] (0.5,-1) -- (0.5,2.5)
        node at (0.7,2.5) {$\eps$};
      \draw[red,thick,dotted] (-0.5,-1) -- (-0.5,2.5)
        node at (-0.75,2.5) {$-\eps$};
      \draw[red,thick,dotted] (-0.5,1) --(0.5,1)
        node at (-0.7,1) {$\frac1{2}$};
      \draw[red,thick,dotted] (-0.5,2) --(0.5,2)
        node at (-0.7,2) {$1$};

      \draw[black,very thick] (-5,0) -- (-0.5,0);
      \draw[black,very thick] (0.5,2) -- (5,2)
        node at (5.2,2.2) {$\Phi^\eps$};
      \draw[black,very thick, smooth, domain=-0.5:0] plot ({\x, sqrt(2.*\x + 1 ) });
      \draw[black,very thick, smooth, domain=0:0.5] plot ({\x, 2-sqrt(1-2.*\x ) });
      
      \draw[black,very thick] (-5,2) -- (-0.5,2)
        node at (-5.2,2.2) {$\Psi^\eps$};;
      \draw[black,very thick] (0.5,0) -- (5,0);
      \draw[black,very thick, smooth, domain=0:0.5] plot ({\x, sqrt(-2.*\x + 1 ) });
      \draw[black,very thick, smooth, domain=-0.5:0] plot ({\x, 2-sqrt(1+2.*\x ) });

      \draw[blue,very thick,|-] (0,-0.5) -- (5, -0.5) 
        node at (3,-0.3) {$\Omega_F$};
      \draw[blue,very thick,|-] (-0.5,-1) -- (5, -1) 
        node at (3,-0.8) {$\Omega_F^\eps$};
        
      \draw[blue,very thick, |-|] (0, -1.5) -- (0.5,-1.5)
        node at (0.75,-1.5) {$\ell_F^\eps$};
      \draw[violet,very thick, |-|] (-0.5, -2) -- (0,-2)
        node at (-0.75,-2) {$\ell_D^\eps$};
      
      \draw[violet, very thick, -|] (-5,0.2) -- (0,0.2)
        node at (-3,0.4) {$\Omega_D$};
      \draw[violet, very thick, -|] (-5,0.7) -- (0.5,0.7)
        node at (-3,0.9) {$\Omega_D^\eps$};
    \end{tikzpicture}    
  \end{center}
\caption{Graphical representation of the diffuse interface approach. $\Omega_F$ and $\Omega_F^\eps$ are, respectively, the sharp and diffuse fluid domains. Similarly, $\Omega_D$ and $\Omega_D^\eps$ are the sharp and diffuse Darcy domains. $\Phi^\eps$ and $\Psi^\eps$ are the distance functions that define the diffuse fluid and Darcy domains, respectively. Finally, $\ell_F^\eps$ and $\ell_D^\eps$ are transitional layers.}
\label{fig:DomainsAndLayers}
\end{figure}

We now describe our diffuse interface approach. We begin by defining the function
\begin{equation}
  \cS(t) = \begin{dcases}
           -1, & t \leq -1, \\
           (t+1)^\alpha -1, & t \in (-1,0], \\
           1-(1-t)^\alpha, & t \in (0, 1], \\
           1, & t>1.
         \end{dcases}
\label{eq:SfunctionBurgers}
\end{equation}
The diffuse domains will be defined via (sub)level sets of rescaled shifts of this function. Namely, let $\eps>0$ and we define, with $\alpha \in (0,1)$,
\begin{equation}
  \Phi^\eps(x) = \frac12\left[ 1 + \mathcal{S}\left( \frac{\sdist_\Gamma(x)}\eps\right) \right], \qquad\qquad 
  \Psi^\eps(x) = 1-\Phi^\eps(x),
\label{eq:defofPhiPsi}
\end{equation}
where $\sdist_\Gamma$ denotes the signed distance function to $\Gamma$, which is positive on $\Omega_F$. 
We then have that, for $\eps$ sufficiently small, $\Phi^{\eps}\approx 1$ in $\Omega_F$, $\Phi^{\eps}\approx 0$ in $\Omega_D$, and $\Phi^{\epsilon}$ transitions between these two values on a ``diffuse'' layer of width $\mathcal{O}(\epsilon)$. A similar reasoning applies to $\Psi^\eps$. To quantify the fact that we now have a diffuse interface, we introduce the following domains, see Figure~\ref{fig:DomainsAndLayers},
\begin{equation}
\label{eq:DefOfDiffuseDomains}
  \begin{aligned}
	\Omega^{\eps}_F &=\{ x \in \Omega: \Phi^{\epsilon}(x)>0\},
	&\Omega^{\epsilon}_D &=\{x \in \Omega: \Psi^{\epsilon}(x)>0\}, \\
    \ell_F^\eps &= \left\{x \in \Omega_F^\eps : \Phi^\eps(x) \in (\tfrac1{2},1) \right\},
    &\ell_D^\eps &= \left\{x \in \Omega_D^\eps : \Psi^\eps(x) \in (\tfrac1{2},1) \right\},
    &\ell^\eps &= \ell_F^\eps \cup \Gamma \cup \ell_D^\eps.
  \end{aligned}
\end{equation}
Indeed, since $\Omega_F \subset \Omega_F^\eps$ and $\Omega_D \subset \Omega_D^\eps$, these are diffuse versions of our fluid and porous medium domains, respectively. The domains $\ell_F^\eps$ and $\ell_D^\eps$ are transitional layers. Observe that, for $\eps$ sufficiently small, we have
\begin{equation}
\label{eq:MeasureOfLayer}
  |\ell^\eps| = |\ell_F^\eps| + |\ell_D^\eps| \lesssim \eps \mathcal{H}_{d-1}(\Gamma),
\end{equation}
where the implied constant is independent of $\eps$ and $\mathcal{H}_{d-1}(\Gamma)$.

The reason for this particular construction of a phase field function is the following result.

\begin{proposition}[$\Phi^\eps \in A_2$]
The function $\Phi^\eps$, when restricted to $\Omega_F^\eps$, is such that $\Phi^\eps \in A_2$. Similarly, the restriction of $\Psi^\eps$ to $\Omega_D^\eps$ defines an $A_2$ weight.
\end{proposition}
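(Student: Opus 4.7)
The plan is to reduce the claim to the already-cited fact \eqref{eq:distisA2} by showing that, on $\Omega_F^\eps$, the weight $\Phi^\eps$ is pointwise comparable, up to positive multiplicative constants, to $\dist_{\Gamma^{-\eps}}^\alpha$, where
\[
  \Gamma^{-\eps} := \{ x \in \Omega : \sdist_\Gamma(x) = -\eps \}
\]
denotes the hypersurface parallel to $\Gamma$ at signed distance $-\eps$, i.e., the internal component of $\partial \Omega_F^\eps$ (see Figure~\ref{fig:DomainsAndLayers}). For $\eps$ smaller than the reach of $\Gamma$, standard facts about smooth hypersurfaces guarantee that $\Gamma^{-\eps}$ is itself smooth and embedded, and that $\dist_{\Gamma^{-\eps}}(x) = \eps + \sdist_\Gamma(x)$ on the Darcy-side transition layer.

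Unpacking \eqref{eq:SfunctionBurgers}--\eqref{eq:defofPhiPsi} partitions $\Omega_F^\eps$ into three pieces: the bulk $\{\sdist_\Gamma \geq \eps\}$, where $\Phi^\eps \equiv 1$; the fluid-side transition $\{0 < \sdist_\Gamma < \eps\}$, where $\Phi^\eps = 1 - \tfrac12 (1 - \sdist_\Gamma/\eps)^\alpha \in [\tfrac12, 1)$; and the Darcy-side transition $\{ -\eps < \sdist_\Gamma \leq 0 \}$, on which one has the exact identity
\[
  \Phi^\eps(x) = \tfrac12 \left( 1 + \frac{\sdist_\Gamma(x)}{\eps} \right)^\alpha = \frac{1}{2\eps^\alpha}\, \dist_{\Gamma^{-\eps}}(x)^\alpha.
\]
In the first two pieces both $\Phi^\eps$ and $\dist_{\Gamma^{-\eps}}^\alpha$ are bounded above and below by positive constants, so they are comparable there (with constants depending on $\eps$ and on the geometry of $\Omega$). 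Combining all three pieces delivers positive constants $c_1, c_2$ such that $c_1 \dist_{\Gamma^{-\eps}}^\alpha \leq \Phi^\eps \leq c_2 \dist_{\Gamma^{-\eps}}^\alpha$ throughout $\Omega_F^\eps$.

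The conclusion then follows from \eqref{eq:distisA2}: since $\Gamma^{-\eps}$ is embedded and $\alpha \in (0,1) \subset (-1,1)$, we have $\dist_{\Gamma^{-\eps}}^\alpha \in A_2$, and the Muckenhoupt $A_2$ class is preserved under pointwise equivalence by functions bounded above and below by positive constants, so $\Phi^\eps \in A_2$. The symmetric argument, using the parallel surface $\Gamma^{+\eps} := \{ \sdist_\Gamma = \eps \}$, handles $\Psi^\eps$ on $\Omega_D^\eps$. The main technical subtlety I anticipate is clarifying the meaning of ``$\Phi^\eps \in A_2$'' when the weight is defined only on $\Omega_F^\eps$ rather than on $\mathbb{R}^d$; the natural resolution is either to restrict the supremum in the $A_2$ seminorm to balls meeting $\Omega_F^\eps$, or to extend $\Phi^\eps$ globally by the $A_2$ weight $c\, \dist_{\Gamma^{-\eps}}^\alpha$ outside $\Omega_F^\eps$. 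A secondary point is verifying that $\eps$ is small enough to lie within the reach of $\Gamma$, so the identities between $\sdist_\Gamma$ and $\dist_{\Gamma^{-\eps}}$ are indeed valid; this is a standard hypothesis that is implicitly already in force throughout the paper's diffuse-interface construction.
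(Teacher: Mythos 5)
Your proposal is correct and follows essentially the same route as the paper's proof: the same three-region decomposition of $\Omega_F^\eps$ (bulk, fluid-side layer, Darcy-side layer), the same identification of $\Phi^\eps$ on the Darcy-side layer with a multiple of $\dist_{\Gamma^\eps}^\alpha$ where $\Gamma^\eps=\partial\Omega_F^\eps\cap\Omega$, and the same appeal to \eqref{eq:distisA2} together with a trivial global extension. You merely make explicit the gluing via pointwise comparability (with $\eps$-dependent constants, which is harmless for the statement as given), which the paper leaves implicit.
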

\begin{proof}
We begin by providing an explanation. The way we defined the $A_2$ class, our functions must be defined in the whole $\mathbb{R}^d$, and almost everywhere positive. What we mean here is that there is a, trivial, extension of the given restrictions and that this extension is in the class $A_2$.

Let us now focus on $\Phi^\eps$. The arguments for $\Psi^\eps$ are similar. Notice that this restriction gives $\Phi^\eps \equiv 1$ in $\Omega_F^\eps \setminus \overline{\ell^\eps}$, $\tfrac1{2} \leq \Phi^\eps \leq 1$ in $\ell^\eps_F$, whereas in $\ell^\eps_D$ it is pointwise equivalent to
\[
  \frac1{2} \dist_{\Gamma^\eps}(x)^\alpha , \qquad \Gamma^\eps = \partial \Omega_F^\eps \cap \Omega.
\]
Since $\alpha \in (0,1)$ we see that the power of the distance belongs to $A_2$.
\end{proof}

With the functions $\Phi^\eps$ and $ \Psi^\eps$ at hand, we can present the following heuristics behind our diffuse domain formulation. Notice, first of all, that $\Phi^\eps \equiv 1$ in $\Omega_F\setminus \ell_F^\eps$, and $\Phi^\eps \equiv 0$ in $\Omega_D \setminus \ell_D^\eps$, with a similar behavior for $\Psi^\eps$. Therefore, for $F:\Omega_F \to \reals$ and $G : \Omega_D \to \reals$, we have
\[
  \int_{\Omega_F} F = \int_\Omega F \chi_{\Omega_F} \approx \int_{\Omega_F^\eps} F^\eps \Phi^\eps, \qquad
  \int_{\Omega_D} G = \int_\Omega G \chi_{\Omega_D} \approx \int_{\Omega_D^\eps} G^\eps \Psi^\eps, 
\]
where $F^\eps$ and $G^\eps$ are suitable extensions of $F$ and $G$, respectively, and $\chi_{\Omega_F}, \chi_{\Omega_D}$ are the characteristic functions of $\Omega_F$ and $\Omega_D$. We also need a way to describe quantities that are only defined on $\Gamma$ in a diffuse context. We recall that  $\bn = - \nabla \dist_\Gamma$. Therefore, if $\eps$ is sufficiently small, we have for $\bw : \Gamma \to \reals^d$,
\[
  \int_\Gamma \bw\cdot \bn \approx - \frac1{2\eps} \int_{\ell^\eps } \bw^\eps \cdot \nabla \dist_\Gamma,
\]
where, again, $\bw^\eps$ is a suitable extension of $\bw$. Heuristically $\ell^\eps$ is a layer of thickness $\eps$ on each side of $\Gamma$, hence the factor $\tfrac1{2\eps}$ makes this approximation dimensionally correct. Finally, we must approximate integrals of scalars defined on $\Gamma$. Since $d\Gamma \approx \tfrac1{2\eps}|\nabla \dist_\Gamma| dx$; for $W : \Gamma \to \reals$ we have
\[
  \int_\Gamma W \approx \frac1{2\eps} \int_{\ell^\eps} W^\eps |\nabla \dist_\Gamma|.
\]
As before, $W^\eps$ is a suitable extension. Notice that, in particular, this allows us to write
\[
  \alpha_{BJ} \sum_{i=1}^{d-1} \int_{\Gamma}(\bu \cdot\bftau_i)(\bv\cdot\bftau_i)   \approx
  \frac{ \alpha_{BJ} }{2\eps} \sum_{i=1}^{d-1} \int_{\ell^\eps}(\bu \cdot \tilde{\bftau}_i)(\bv\cdot \tilde{\bftau}_i) |\nabla\dist_\Gamma|,
\]
where the tangent vectors $\tilde{\bftau}_i$ are obtained directly from the phase-field function using the algorithm described in~\cite{martin2000practical,stoter2017diffuse}.

We would like to note that in all approximations listed above, the integrals on the right-hand sides can be equivalently written over the entire domain $\Omega$. In that case, the formulation is more suitable for numerical implementation, as described in Section~\ref{Sec:numerics}. 

\begin{remark}[diffuse interface integrals]
In several references, see for instance \cite{burger2017analysis}, a diffuse normal to the interface is defined by $\bn^\eps = \tfrac{\nabla \Phi^\eps}{|\nabla \Phi^\eps|}$. This, in principle, would only entail some modifications to the way we define the diffuse interface integrals that we have described above. Notice, however, that $\mathcal{S}'$ is not defined when $|t|=1$, and so $\nabla \Phi^\eps$ is not defined either. For this reason, and to more closely follow our numerical implementation, we have kept the distance function, and its gradient, in the diffuse interface integrals.
\end{remark}

%
%

To formulate and analyze our diffuse domain problem, we define, for $k\in\N$, following spaces:
\begin{align*}
  \mathcal{V}^{k,\eps} &= \left\{ \bu \in H^k(\Omega^{\eps}_F, \Phi^{\eps})^d \; : \; \bu = \boldsymbol0 \; \textrm{on} \;  \Gamma_F^1 \right\},  
  &\mathcal{V}^{\eps} &= \mathcal{V}^{1,\eps},
  \\
  \mathcal{X}^{k,\eps} &= \left\{ \psi \in H^k(\Omega_D^{\eps}, \Psi^{\eps}) \; : \; \psi =0 \; \textrm{on} \;  \Gamma_D^1 \right\},
  & \mathcal{X}^{\eps}&=\mathcal{X}^{1,\eps}, \\
  \mathcal{M}^{k,\eps} &= H^k(\Omega_F^\eps, \Phi^\eps), &\mathcal{M}^\eps &= \mathcal{M}^{0,\eps}, \\
  \mathcal{Q}^{k,\eps} &= H^k(\Omega_F^\eps, 1/\Phi^\eps), &\mathcal{Q}^\eps &= \mathcal{Q}^{0,\eps}.
\end{align*}
We comment that, owing to the trace properties of weighted Sobolev spaces we mentioned in Section~\ref{sec:Notation}, these definitions are meaningful and, in addition, all these spaces are Hilbert and separable.
The spaces $\mathcal{V}^{\eps}$, $\mathcal{X}^{\eps}$, and $\mathcal{M}^\eps$ will be associated with the fluid velocity, the porous pressure, and the fluid pressure, respectively. The space $\mathcal{Q}^\eps$ is auxiliary. To handle integrals at the diffuse interface we shall need the following variant of \cite[Theorem 4.2]{burger2017analysis}.

\begin{lemma}[Trace inequality]\label{nablaPhi}
Let $\epsilon_0$ be sufficiently small. Then, there exists a constant $C_{tr}>0$ such that for $0< \epsilon < \epsilon_0$ and for $\bv \in \mathcal{V}^{\eps}$, we have:
\begin{align*}
    \frac1{2\eps} \int_{\ell^\eps} |\bv|^2 |\nabla \dist_\Gamma | \leq C_{tr} \| \bv \|^2_{\mathcal{V}^\eps}.
\end{align*}
A similar estimate hols for $\psi \in \mathcal{X}^\eps$.
\end{lemma}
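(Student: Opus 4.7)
The plan is to use Fermi coordinates in a tubular neighborhood of $\Gamma$ and bound the layer integral along normals to $\Gamma$ via the fundamental theorem of calculus, treating the fluid side (where $\Phi^\eps\geq\tfrac12$) and the Darcy side (where $\Phi^\eps$ degenerates at $\Gamma^\eps$) separately. For $\eps_0$ small enough, depending on the curvature of $\Gamma$, the map $(y,s)\mapsto y+s\bn(y)$ is a bi-Lipschitz diffeomorphism from $\Gamma\times(-\eps,\eps)$ onto $\ell^\eps$, with Jacobian uniformly comparable to $1$ and $|\nabla\dist_\Gamma|=1$ a.e. Using density of smooth functions in $\mathcal{V}^\eps$ (which holds for $A_2$ weights), I assume $\bv$ is smooth and write
\[
  \bv(y+s\bn)=\bv(y)+\int_0^s\nabla\bv(y+t\bn)\bn\,dt,\qquad |\bv(y+s\bn)|^2\leq 2|\bv(y)|^2+2|I_s|^2.
\]

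For the boundary term $\|\bv\|_{L^2(\Gamma)^d}$, I exploit that $\Phi^\eps\equiv 1$ on $\Omega_F\setminus\ell_F^\eps$ and $\Phi^\eps\geq\tfrac12$ on $\ell_F^\eps$, so $\bv|_{\Omega_F}\in H^1(\Omega_F)^d$ with $\|\bv\|_{H^1(\Omega_F)^d}\leq\sqrt{2}\,\|\bv\|_{\mathcal{V}^\eps}$. Since $\Gamma\subset\partial\Omega_F$ is Lipschitz, the standard trace theorem then yields $\|\bv\|_{L^2(\Gamma)^d}\lesssim\|\bv\|_{\mathcal{V}^\eps}$ with constant independent of $\eps$. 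Integrating $2|\bv(y)|^2$ in $(y,s)\in\Gamma\times(-\eps,\eps)$ and dividing by $2\eps$ produces a contribution $\lesssim\|\bv\|_{\mathcal{V}^\eps}^2$.

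For $|I_s|^2$ on the fluid side $s\in(0,\eps)$, since $\Phi^\eps\geq\tfrac12$ on $\ell_F^\eps$, unweighted Cauchy--Schwarz gives $|I_s|^2\leq\eps\int_0^\eps|\nabla\bv|^2\,dt\leq 2\eps\int_0^\eps|\nabla\bv|^2\Phi^\eps\,dt$, and integration in $s$ with Fubini leaves a contribution $\lesssim\eps\|\nabla\bv\|_{L^2(\ell_F^\eps,\Phi^\eps)^{d\times d}}^2$. On the Darcy side $s\in(-\eps,0)$, an explicit computation from \eqref{eq:SfunctionBurgers}--\eqref{eq:defofPhiPsi} gives $\Phi^\eps(y-t\bn)=\tfrac12\bigl((\eps-t)/\eps\bigr)^\alpha$ for $t\in(0,\eps)$, so weighted Cauchy--Schwarz yields
\[
  |I_s|^2\leq\Bigl(\int_0^{|s|}(\Phi^\eps)^{-1}\,dt\Bigr)\Bigl(\int_0^{|s|}|\nabla\bv|^2\Phi^\eps\,dt\Bigr)\leq\frac{2\eps}{1-\alpha}\int_0^{|s|}|\nabla\bv|^2\Phi^\eps\,dt,
\]
because $\alpha\in(0,1)$ makes $2\eps^\alpha\int_0^{|s|}(\eps-t)^{-\alpha}\,dt\leq\tfrac{2\eps}{1-\alpha}$. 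Integration in $s$ and Fubini then give a contribution $\lesssim\eps\|\nabla\bv\|_{L^2(\ell_D^\eps,\Phi^\eps)^{d\times d}}^2$. Summing the three contributions and using $\eps\leq\eps_0$ completes the estimate with an $\eps$-independent constant $C_{tr}$. The bound for $\psi\in\mathcal{X}^\eps$ is proved identically after swapping $\Phi^\eps\leftrightarrow\Psi^\eps$, $\Omega_F\leftrightarrow\Omega_D$, and $\ell_F^\eps\leftrightarrow\ell_D^\eps$. The main obstacle is the degeneracy of the weight on the Darcy side, which is resolved precisely by the condition $\alpha<1$ ensuring integrability of $(\eps-t)^{-\alpha}$.
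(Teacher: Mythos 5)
Your proof is correct, but it takes a genuinely different route from the paper. The paper's proof is a two-line reduction: it invokes \cite[Theorem 4.2]{burger2017analysis} with the Lipschitz profile $S_r(t)=t$, which gives the trace bound with the non-degenerate weight $\omega^\eps=\tfrac12(1+\min\{1,\dist_\Gamma/\eps\})$ on the right-hand side, and then upgrades to $\Phi^\eps$ via the pointwise inequality $\omega^\eps\leq\Phi^\eps$, which follows from $t\leq t^\alpha$ for $t\in(0,1)$ and $\alpha\in(0,1)$. You instead reprove the diffuse trace inequality from scratch: Fermi coordinates, the fundamental theorem of calculus along normals, the classical trace theorem on $\Omega_F$ (where $\Phi^\eps\geq\tfrac12$), and a weighted Cauchy--Schwarz on the degenerate side using the explicit formula $\Phi^\eps=\tfrac12((\eps-t)/\eps)^\alpha$ and the integrability of $(\eps-t)^{-\alpha}$. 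What your approach buys is self-containedness, a transparent identification of where $\alpha<1$ (equivalently, $\Phi^\eps\in A_2$) enters, and a slightly sharper conclusion, since the gradient contributions carry an extra factor of $\eps$; what the paper's approach buys is brevity, delegating all the tubular-neighborhood geometry to the cited reference. Two cosmetic remarks: your labeling of the ``fluid'' and ``Darcy'' sides as $s>0$ and $s<0$ is flipped relative to the paper's convention that $\bn$ points into $\Omega_D$ (harmless, since the argument on each side only depends on the behavior of the weight there), and the density of smooth functions in $\mathcal{V}^\eps$ that you use to justify the pointwise FTC identity is indeed standard for $A_2$ weights, as asserted.
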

\begin{proof}
We begin by applying \cite[Theorem 4.2]{burger2017analysis} with the function $S$ of \cite[Example 3.1(i)]{burger2017analysis}, i.e., $S(t) =t$ for $|t|<1$. In the notation of that result this means that, on $\ell^\eps$,
\[
  \varphi^\eps = \frac1\eps \dist_\Gamma, \qquad \omega^\eps = \frac12 \left( 1 + \varphi^\eps \right),
\]
and 
\[
  \frac1{2\eps} \int_{\ell^\eps} |\bv|^2 |\nabla \dist_\Gamma | = \int_{\Omega} |\bv|^2 |\nabla \omega^\eps |\leq C \int_{\Omega_F^\eps} \left[ |\bv|^2 + |\nabla \bv|^2 \right] \omega^\eps = C\int_{\Omega_F^\eps} \left[ |\bv|^2 + |\nabla \bv|^2 \right] \frac{ 1+ \min\{1, \dist_\Gamma/\eps\}}{2}.
\]
Notice now that, $\dist_\Gamma/\eps \leq 1$ if and only if $\tfrac{ \dist_\Gamma + \eps}{2\eps}\leq 1$. Moreover, since $\alpha \in (0,1)$, we have $t \leq t^\alpha$ whenever $t \in (0,1)$, i.e., $\omega^\eps \leq \Phi^\eps$. In summary,
\[
  \frac1{2\eps} \int_{\ell^\eps} |\bv|^2 |\nabla \dist_\Gamma | \leq C_{tr}\int_{\Omega_F^\eps}\left[ |\bv|^2 + |\nabla \bv|^2 \right] \Phi^\eps,
\]
as claimed.
\end{proof}

We are now ready to introduce the notion of weak solution for the diffuse interface problem. We shall assume that there is $\eps_0>0$ such that, for every $\eps \in (0,\eps_0]$, we have $\bu_0^\eps$, $\bF^\eps$, $\boldsymbol{\kappa}_\eps$, $p_0^\eps$, and $g^\eps$, which are suitable extensions of $u_0$, $\bF$, $\boldsymbol\kappa$, $p_0$, and $g$ to $\Omega_F^\eps$ and $\Omega_D^\eps$, respectively.

As a last preliminary step, we introduce some notation. The bilinear form $\cA_\eps: [\mathcal{V}^{\epsilon}\times\mathcal{X}^{\epsilon}]^2 \to \R$ is
\begin{align*}
  \cA_\eps((\bu,p),(\bv,\psi)) &=
  2\mu\int_{\Omega_F^\eps}\bD(\bu):\bD(\bv)\Phi^\eps
  +\int_{\Omega_D^\eps} \boldsymbol\kappa_\eps\nabla p \cdot \nabla \psi  \Psi^\eps 
  \\
  &+\frac1{2\eps}\int_{\ell^\eps} \psi \bu \cdot \nabla \dist_\Gamma
  -\frac1{2\eps} \int_{\ell^\eps} p \bv \cdot \nabla \dist_\Gamma
  +\frac{ \alpha_{BJ} }{2\eps} \sum_{i=1}^{d-1} \int_{\ell^\eps}(\bu \cdot \tilde{\bftau}_i)(\bv\cdot \tilde{\bftau}_i) |\nabla\dist_\Gamma|.
\end{align*}
Notice that, with the aid of Lemma~\ref{nablaPhi}, all the integrals in this definition make sense. The bilinear forms $\cB_\eps : \mathcal{V}^\eps \times \mathcal{M}^\eps \to \R$ and $\cC_\eps : \mathcal{V}^\eps \times \mathcal{Q}^\eps \to \R$ are, respectively,
\[
  \cB_\eps(\bv,\pi) = -\int_{\Omega_F^\eps} \nabla \cdot \bv \pi \Phi^\eps, \qquad
  \cC_\eps(\bv,\pi) = -\int_{\Omega_F^\eps} \nabla \cdot \bv \pi.
\]
Finally, the linear form $\mathcal{F}_\eps: \mathcal{V}^{\epsilon}\times\mathcal{X}^{\epsilon} \to \R$ is defined as:
\[
  \langle\mathcal{F}_\eps,(\bv,\psi)\rangle = \rho\int_{\Omega_F^\eps}\bF^\eps\cdot\bv\Phi^\eps+\int_{\Omega_D^\eps}g^\eps\psi  \Psi^\eps .
\]

Let us now define our notion of solution.

\begin{definition}[Weak solution of the diffuse interface problem]\label{SDFP_def}
We say that the triple $(\bu^{\eps},p^{\eps},\pi^{\eps})$ with
\begin{align*}
  \bu^{\eps} &\in L^\infty(0,T;L^2(\Omega_F^\eps, \Phi^\eps)) \cap L^2(0,T;\mathcal{V}^{\eps}), &\partial_t \bu^\eps &\in L^2(0,T;(\mathcal{V}^\eps)^* ) ,\\
  p^{\eps} &\in L^\infty(0,T;L^2(\Omega_D^\eps, \Psi^\eps )) \cap L^2(0,T;\mathcal{X}^{\eps}), &\partial_t p^\eps &\in L^2(0,T;(\mathcal{X}^\eps)^* ), \\
  \pi^{\eps} &\in H^{-1}(0,T;\mathcal{M}^\eps),
\end{align*} 
is a diffuse interface weak solution to our problem if
\[
  \bu^\eps(\cdot,0)= \bu_0^\eps, \quad {\rm in} \; L^2(\Omega_F^\eps,\Phi^\eps), \qquad p^\eps(\cdot,0) = p_0^\eps, \quad {\rm in} \; L^2(\Omega_D^\eps, \Psi^\eps ),
\]
and, in addition, for every $(\bv,\psi,\varphi)\in \mathcal{V}^{\eps} \times \mathcal{X}^{\eps} \times L^2(\Omega^{\eps}_F)$,  the following equality is satisfied in $\mathcal{D}'(0,T)$:
\begin{equation}
\label{SDFP_weak}
  \frac{d}{dt}\left(
   \rho \langle \bu^{\eps} , \bv \rangle_{\mathcal{V}^\eps} +c_0\langle p^{\eps}, \psi \rangle_{\mathcal{X}^{\eps}}  
  \right)
  + \cA_\eps( (\bu^\eps,p^\eps), (\bv,\psi) )
  + \cB_\eps(\bv,\pi^\eps) - \cB_\eps( \bu^\eps, \varphi )
  = \langle\mathcal{F}_\eps,(\bv,\psi)\rangle .
\end{equation}
\end{definition}

We observe that the initial conditions are meaningful owing, once again, to the Gelfand triple structure we have adopted.

Our main goal in this paper is to study the convergence of suitable discretizations of \eqref{SDFP_weak} to solutions of our problem in the sharp interface sense, i.e., according to Definition~\ref{WS}. This will be done in two steps. In the first step, in Section \ref{Sec_error}, we fix $\epsilon$ and the phase field functions $\Phi^\eps, \Psi^\eps$, and prove error estimates for a finite element approximation of the diffuse interface formulation~\eqref{SDFP_weak}. In the second step, in Section \ref{Sec_ModellinError}, we analyze the convergence of the continuous diffuse interface  formulation to the continuous sharp interface formulation.

\subsection{Well-posedness}\label{sub:DiffuseDomainWellPosed}

Let us prove the well-posedness for the diffuse interface formulation. Owing to all the existing theory regarding weighted Sobolev spaces, the proof is similar to the well-posedness proof for the Stokes-Darcy system (e.g. \cite{cao2010coupled,DisQuart09} and Theorem~\ref{thm:WellPosedSharpInterface}). Therefore, here we just outline the main steps of the proof. We define the function spaces
\begin{align*}
  \mathcal{H}^\eps_{div} &= \left\{ \bu \in L^2(\Omega_F^\eps, \Phi^\eps)^d\; : \; \nabla\cdot\bu = 0 \; \textrm{in} \; \Omega_F^\eps \;, \; \bu\cdot \bn = 0 \; \textrm{on} \;  \Gamma_F^1 \right\},
\qquad
  \mathcal{V}^{\epsilon}_{div} &=\left\{\bv \in \mathcal{V}^{\eps} : \nabla\cdot\bv=0 \right\}.
\end{align*}
Notice that, by definition, $\bu^\eps \in L^2(0,T;\mathcal{V}^{\epsilon}_{div})$. First we prove the following lemma.

\begin{lemma}[Coercivity]
The  bilinear  form  $\cA_\eps(\cdot, \cdot) $ is  continuous  and  coercive  on $\mathcal{V}^{\epsilon}\times\mathcal{X}^{\epsilon}$.
\label{bilina}
\end{lemma}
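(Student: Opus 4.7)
The plan is to verify the two properties separately, handling first the boundedness of each of the five terms defining $\mathcal{A}_\eps$, and then exploiting a fortunate cancellation to obtain coercivity.

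\textbf{Continuity.} I would bound each of the five contributions individually. For the viscous term, a direct Cauchy--Schwarz inequality in $L^2(\Omega_F^\eps,\Phi^\eps)$ gives
\[
  2\mu\int_{\Omega_F^\eps}\bD(\bu):\bD(\bv)\Phi^\eps \leq 2\mu\,\|\bD(\bu)\|_{L^2(\Omega_F^\eps,\Phi^\eps)^{d\times d}}\|\bD(\bv)\|_{L^2(\Omega_F^\eps,\Phi^\eps)^{d\times d}} \leq 2\mu\,\|\bu\|_{\mathcal{V}^\eps}\|\bv\|_{\mathcal{V}^\eps},
\]
and the Darcy term is treated analogously after invoking the upper bound $k^*$ on the spectrum of $\boldsymbol\kappa_\eps$. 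For the three interface contributions, the factor $|\nabla\dist_\Gamma|$ is bounded a.e.\ by $1$ and Cauchy--Schwarz in $L^2(\ell^\eps,|\nabla\dist_\Gamma|)$ combined with Lemma~\ref{nablaPhi} gives, for instance,
\[
  \frac{1}{2\eps}\int_{\ell^\eps}\psi\,\bu\cdot\nabla\dist_\Gamma \leq \Bigl(\frac{1}{2\eps}\int_{\ell^\eps}|\bu|^2|\nabla\dist_\Gamma|\Bigr)^{1/2}\Bigl(\frac{1}{2\eps}\int_{\ell^\eps}|\psi|^2|\nabla\dist_\Gamma|\Bigr)^{1/2} \leq C_{tr}\,\|\bu\|_{\mathcal{V}^\eps}\|\psi\|_{\mathcal{X}^\eps},
\]
and the Beavers--Joseph--Saffman term is handled identically. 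The constant, by Lemma~\ref{nablaPhi}, is independent of $\eps$.

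\textbf{Coercivity.} This is where I expect the argument to be cleanest. Testing with $(\bv,\psi)=(\bu,p)$, the two mass-exchange terms $\tfrac{1}{2\eps}\int_{\ell^\eps}p\,\bu\cdot\nabla\dist_\Gamma$ cancel exactly (with opposite signs), so
\[
  \mathcal{A}_\eps((\bu,p),(\bu,p)) = 2\mu\int_{\Omega_F^\eps}|\bD(\bu)|^2\Phi^\eps + \int_{\Omega_D^\eps}\boldsymbol\kappa_\eps\nabla p\cdot\nabla p\,\Psi^\eps + \frac{\alpha_{BJ}}{2\eps}\sum_{i=1}^{d-1}\int_{\ell^\eps}|\bu\cdot\tilde{\bftau}_i|^2|\nabla\dist_\Gamma|.
\]
The last term is nonnegative and may be discarded. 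For the viscous term I invoke the weighted Korn inequality \eqref{eq:weightedKorn} applied in $\Omega_F^\eps$ with the weight $\Phi^\eps\in A_2$ (established in the preceding proposition) and with $\Gamma=\Gamma_F^1$, which gives $2\mu\int_{\Omega_F^\eps}|\bD(\bu)|^2\Phi^\eps \geq (2\mu/C_K)\|\bu\|_{\mathcal{V}^\eps}^2$. For the Darcy term I use the spectral bound $k_*\leq\lambda(\boldsymbol\kappa_\eps)$ followed by the weighted Poincar\'e--Friedrichs inequality \eqref{eq:WeightedPoincare} on $\Omega_D^\eps$ with weight $\Psi^\eps\in A_2$ and with $\Gamma=\Gamma_D^1$ (on which $p$ vanishes), obtaining $\int_{\Omega_D^\eps}\boldsymbol\kappa_\eps|\nabla p|^2\Psi^\eps \geq (k_*/C_P)\|p\|_{\mathcal{X}^\eps}^2$. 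Combining, one obtains
\[
  \mathcal{A}_\eps((\bu,p),(\bu,p)) \geq \min\{2\mu/C_K,\,k_*/C_P\}\bigl(\|\bu\|_{\mathcal{V}^\eps}^2 + \|p\|_{\mathcal{X}^\eps}^2\bigr),
\]
which is the desired coercivity.

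\textbf{Where the subtlety lies.} The only non-routine ingredient is making sure that the constants from Korn, Poincar\'e, and the trace lemma are independent of $\eps$. This is precisely what the careful construction of $\Phi^\eps, \Psi^\eps$ via \eqref{eq:SfunctionBurgers}--\eqref{eq:defofPhiPsi} buys us: the proposition preceding the lemma identifies these as $A_2$ weights with $[\Phi^\eps]_{A_2}$, $[\Psi^\eps]_{A_2}$ bounded uniformly in $\eps$, and the remark at the end of Section~\ref{sec:Notation} confirms that the constants in all the weighted inequalities depend on the weights only through the Muckenhoupt seminorm. The cancellation of the cross terms is what allows us to avoid any sign-indefinite contribution and to bypass the kind of Young-inequality absorption that would otherwise be necessary.
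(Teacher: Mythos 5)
Your proposal is correct and follows essentially the same route as the paper: Cauchy--Schwarz together with the trace estimate of Lemma~\ref{nablaPhi} for the continuity of the interface terms, and then the exact cancellation of the two mass-exchange terms upon testing with $(\bu,p)$, followed by the weighted Korn inequality \eqref{eq:weightedKorn} and the weighted Poincar\'e--Friedrichs inequality \eqref{eq:WeightedPoincare}, yielding the same coercivity constant $\min\{2\mu/C_K,\,k_*/C_P\}$. No gaps.
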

\begin{proof}
In order to show continuity, special attention has to be given to the terms arising from the coupling at the diffuse interface. The Cauchy-Schwartz inequality followed by Lemma~\ref{nablaPhi} yield:
\begin{align*}
  \frac1{2\eps}\int_{\ell^\eps} p \bv \cdot \nabla\dist_\Gamma
  & \leq
  \left[  \frac1{2\eps} \int_{\ell^\eps} |p|^2 |\nabla \dist_\Gamma| \right]^{\frac12}
  \left[  \frac1{2\eps}\int_{\ell^\eps} |\bv|^2 |\nabla \dist_\Gamma| \right]^{\frac12}  
  \leq
  C_{tr}  \|p\|_{\mathcal{X}^\eps}
  \|\bv\|_{\mathcal{V}^\eps}.
\end{align*}
Similarly,  we have
\begin{align*}
  \frac1{2\eps} \int_{\ell^\eps} \psi \bu \cdot \nabla\dist_\Gamma
  \leq
  C_{tr}  \|\psi\|_{\mathcal{X}^\eps}
  \|\bu\|_{\mathcal{V}^\eps},
\end{align*}
and
\begin{align*}
  \frac1{2\eps} \sum_{i=1}^{d-1} \int_{\ell^\eps}(\bu \cdot \tilde{\bftau}_i)(\bv\cdot \tilde{\bftau}_i) |\nabla\dist_\Gamma |
  &    \leq 
  \sum_{i=1}^{d-1}
  \left(  \frac1{2\eps} \int_{\ell^\eps} \left|\bu \cdot \tilde{\bftau}_i \right|^2 |\nabla \dist_\Gamma | \right)^{\frac12}
  \left(  \frac1{2\eps} \int_{\ell^\eps} \left|\bv \cdot \tilde{\bftau}_i \right|^2 |\nabla \dist_\Gamma | \right)^{\frac12} 
  \\
  &    \leq
  \sum_{i=1}^{d-1} C_{tr}
  \left\|\bu \cdot \tilde{\bftau}_i \right\|_{H^1(\Omega_F^\eps, \Phi^\eps)}
  \left\|\bv \cdot \tilde{\bftau}_i \right\|_{H^1(\Omega_F^\eps, \Phi^\eps)}
  \leq C_{tr}
  \left\|\bu \right\|_{\mathcal{V}^\eps}
  \left\|\bv \right\|_{\mathcal{V}^\eps}.
\end{align*}
Other terms can be bounded in a standard way. Therefore, we obtain the following estimate:
\begin{align*}
  \cA_\eps((\bu,p),(\bv,\psi))\leq  &  
    2\mu \|\bD(\bu)\|_{L^2(\Omega_F^\eps, \Phi^\eps)} \|\bD(\bv)\|_{L^2(\Omega_F^\eps, \Phi^\eps)} 
  +\|\boldsymbol\kappa_\eps^{1/2} \nabla p \|_{L^2(\Omega_D^\eps,  \Psi^\eps )} \| \boldsymbol\kappa_\eps^{1/2}\nabla \psi \|_{L^2(\Omega_D^\eps,  \Psi^\eps )} 
  \\
  &  
  + C_{tr} \|p\|_{\mathcal{X}^\eps} \|\bv\|_{\mathcal{V}^\eps}
  + C_{tr} \|\psi\|_{\mathcal{X}^\eps} \|\bu\|_{\mathcal{V}^\eps}
  +\alpha_{BJ} C_{tr} \left\|\bu \right\|_{\mathcal{V}^\eps} \left\|\bv \right\|_{\mathcal{V}^\eps}
  \\
  \leq  & 
  D_1  \left\|(\bu,p) \right\|_{\mathcal{V}^{\epsilon}\times\mathcal{X}^{\epsilon}}
  \left\|(\bv,\psi) \right\|_{\mathcal{V}^{\epsilon} \times\mathcal{X}^{\epsilon}},
\end{align*}
where
\[
  D_1=\max\{ \alpha_{BJ} C_{tr}, 2 \mu, k^{*}, C_{tr} \}.
\]
To show coercivity, we set $(\bv,\psi) =(\bu,p) $ to obtain:
\begin{align*}
  \cA_\eps((\bu,p), (\bu,p)) = & 2\mu\int_{\Omega_F^\eps}|\bD(\bu)|^2 \Phi^\eps
  +\int_{\Omega_D^\eps} \boldsymbol\kappa_\eps |\nabla p |^2  \Psi^\eps 
  +\frac{\alpha_{BJ}}{2\eps} \sum_{i=1}^{d-1} \int_{\ell^\eps}(\bu \cdot \tilde{\bftau}_i)^2  |\nabla\dist_\Gamma|
  \\
  \geq &
  2\mu \|\bD(\bu)\|^2_{L^2(\Omega_F^\eps, \Phi^\eps)}
  + \|\boldsymbol\kappa_\eps^{\frac12} \nabla p \|^2_{L^2(\Omega_D^\eps,  \Psi^\eps )}.
\end{align*}
Using the weighted Korn inequality given in \eqref{eq:weightedKorn} and the weighted Poincar\'e-Friedrichs-type inequality of \eqref{eq:WeightedPoincare}, we have
\[
  \cA_\eps((\bu,p), (\bu,p)) 
  \geq
  D_2  \left\|(\bu,p)\right\|^2_{\mathcal{V}^{\epsilon}\times\mathcal{X}^{\epsilon}},
  \qquad
  D_2=\min\left\{ \frac{2\mu}{C_K}, \frac{k_*}{C_P}\right\},
\]
which completes the proof.
\end{proof}

We can now prove well-posedness.

\begin{theorem}[Well-posedness]\label{WellPosednes}
There is $\eps_0>0$ such that, for every $\eps \in (0, \eps_0]$, problem \eqref{SDFP_weak} is well-posed in the following sense. Let $\boldsymbol\kappa_\eps \in L^\infty(\Omega_D^\eps)$ be uniformly bounded and positive definite. Then, for every $\bu_0^\eps \in \mathcal{H}^\eps_{div}$, $p_0^\eps \in L^2(\Omega_D^\eps, \Psi^\eps )$, $\bF^\eps \in L^2(\Omega_F^\eps,\frac1{\Phi^\eps})$ and $g^\eps \in L^2(\Omega_D^\eps,\frac1{ \Psi^\eps })$, problem \eqref{SDFP_weak} has a unique solution. Moreover, this solution satisfies the energy identity
\begin{multline*}
  \frac{1}{2}\frac{d}{dt}\left(
    \rho\|\bu^\eps\|^2_{L^2(\Omega_F^\eps,\Phi^\eps)^d} +c_0\|p^\eps\|^2_{L^2(\Omega_D^\eps,   \Psi^\eps )}
  \right)
  +2\mu\|\bD(\bu^\eps)\|^2_{L^2(\Omega_F^\eps,\Phi^\eps)^{{d \times d}}}
  +\frac{\alpha_{BJ}}{2\eps} \sum_{i=1}^{d-1} \int_{\ell^\eps} | \bu^\eps \cdot\tilde \bftau_i |^2 |\nabla \dist_\Gamma| \\
  +\|\boldsymbol \kappa_\eps^{\frac12} \nabla p^\eps\|^2_{L^2(\Omega_D^\eps, \Psi^\eps )^{{d}}}= 
  \rho\int_{\Omega_F^\eps} \bF^\eps \cdot \bu^\eps \Phi^\eps+ \int_{\Omega_D^\eps} g^\eps p^\eps  \Psi^\eps .
\end{multline*}
\end{theorem}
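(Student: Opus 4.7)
The plan is to follow the classical saddle-point parabolic framework, adapted to the weighted Sobolev setting. First I would eliminate the pressure $\pi^{\eps}$ by restricting the test functions in \eqref{SDFP_weak} to $\mathcal{V}^{\eps}_{div} \times \mathcal{X}^{\eps}$, leaving a linear evolution problem for $(\bu^{\eps},p^{\eps})$ governed entirely by $\cA_\eps$. Since $\mathcal{V}^{\eps}_{div}$ and $\mathcal{X}^{\eps}$ are separable Hilbert spaces, I can choose Galerkin bases and construct finite-dimensional approximations that solve a linear ODE system. The continuity and coercivity of $\cA_\eps$ established in Lemma~\ref{bilina} guarantee that these ODEs are well-posed and that the initial data (projected onto the finite-dimensional subspaces) can be accommodated.

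For the a priori bounds I would test the Galerkin equation with the approximation itself. Coercivity yields control of $\|\bu^{\eps}\|_{\mathcal{V}^{\eps}}$, $\|p^{\eps}\|_{\mathcal{X}^{\eps}}$, and the tangential boundary term on $\ell^{\eps}$, while the right-hand side is controlled by pairing $L^2(\Omega_F^{\eps},\Phi^{\eps})$ against $L^2(\Omega_F^{\eps},1/\Phi^{\eps})$ (and analogously for $\Psi^{\eps}$), which is precisely why the hypothesis on $\bF^\eps$ and $g^\eps$ is stated with the inverse weights. A standard Gr\"onwall argument yields uniform bounds in the regularity classes declared in Definition~\ref{SDFP_def}. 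From these bounds and the weak formulation, I extract bounds on $\partial_t \bu^{\eps}$ and $\partial_t p^{\eps}$ in $(\mathcal{V}^\eps)^*$ and $(\mathcal{X}^\eps)^*$, respectively, and pass to the limit using Banach-Alaoglu and an Aubin-Lions argument; the compactness of the embeddings $\mathcal{V}^{\eps} \hookrightarrow L^2(\Omega_F^{\eps},\Phi^{\eps})$ and $\mathcal{X}^{\eps}\hookrightarrow L^2(\Omega_D^{\eps},\Psi^{\eps})$ is available in the $A_2$-weighted setting.

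The step I expect to be the main obstacle is recovering the pressure $\pi^{\eps} \in H^{-1}(0,T;\mathcal{M}^{\eps})$. The candidate functional defining $\pi^{\eps}$ through $\cB_\eps$ lives on $\mathcal{V}^{\eps}$ and vanishes on $\mathcal{V}^{\eps}_{div}$; to invert it into $\mathcal{M}^{\eps}=L^2(\Omega_F^{\eps},\Phi^{\eps})$, I apply the weighted Bogovski\u\i{} estimate \eqref{eq:Bogovskii} with weight $\omega = \Phi^{\eps}$, which is admissible because $\Phi^{\eps} \in A_2$ by the proposition preceding Lemma~\ref{nablaPhi}. This produces a distribution-in-time pressure in the correct weighted space and yields validity of \eqref{SDFP_weak} against all test triples. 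The asserted time regularity then follows by reading off $\cB_\eps(\bv,\pi^{\eps})$ from the remaining terms of \eqref{SDFP_weak}, all of which are already controlled in $H^{-1}(0,T)$.

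Uniqueness and the energy identity are then standard. For uniqueness, the difference of two solutions satisfies the homogeneous equation; testing with itself and using coercivity of $\cA_\eps$ together with Gr\"onwall forces both components to vanish. For the energy identity, I set $(\bv,\psi,\varphi)=(\bu^{\eps},p^{\eps},\pi^{\eps})$ in \eqref{SDFP_weak}, noting that the two pressure terms $\cB_\eps(\bu^{\eps},\pi^{\eps})$ cancel, and that the interface coupling contributions
\[
\frac{1}{2\eps}\int_{\ell^{\eps}} p^{\eps}\,\bu^{\eps}\cdot\nabla\dist_\Gamma \quad \text{and} \quad -\frac{1}{2\eps}\int_{\ell^{\eps}} p^{\eps}\,\bu^{\eps}\cdot\nabla\dist_\Gamma
\]
cancel by antisymmetry of $\cA_\eps$ in its cross terms, leaving exactly the dissipation and forcing displayed in the theorem. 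The justification for using $(\bu^{\eps},p^{\eps})$ as a test pair in time relies on the Gelfand-triple structure, as in \cite[Lemma 7.3]{Roubicek2013}.
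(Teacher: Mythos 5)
Your overall architecture---Galerkin on $\mathcal{V}^\eps_{div}\times\mathcal{X}^\eps$, energy estimates from coercivity of $\cA_\eps$ (Lemma~\ref{bilina}), recovery of the pressure afterwards, then uniqueness and the energy identity by testing with the solution---is the same as the paper's, and those parts are fine (the Aubin--Lions compactness you invoke is unnecessary for a linear problem, but harmless).

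The genuine gap is in the step you yourself single out as the main obstacle: the recovery of $\pi^\eps$. You propose to ``apply the weighted Bogovski\u\i{} estimate \eqref{eq:Bogovskii} with weight $\omega=\Phi^\eps$,'' but \eqref{eq:Bogovskii} does not apply to the functional you need to represent, for two reasons that the paper explicitly flags. First, \eqref{eq:Bogovskii} is an inf--sup inequality with velocities ranging over $H^1_0(\Omega_F^\eps,\Phi^\eps)^d$, i.e., fields vanishing on the whole boundary, whereas the residual functional defining the pressure lives on the larger space $\mathcal{V}^\eps$ (vanishing only on $\Gamma_F^1$) and is annihilated only by $\mathcal{V}^\eps_{div}$; the surjectivity of the divergence must therefore be established with test fields in $\mathcal{V}^\eps$. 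Second, \eqref{eq:Bogovskii} only controls mean-zero $\psi$, while here, because of the traction condition on $\Gamma_F^2$, the pressure is determined absolutely and need not have zero average. The paper closes both holes with a separate argument: it decomposes $\psi=\psi_1+\psi_2$ into a mean-zero part (handled by \eqref{eq:Bogovskii}) and a constant part (handled by constructing $\bw\in\mathcal{V}^\eps$ with nonvanishing flux through $\Gamma_\eps=\partial\Omega_F^\eps\setminus\Gamma_F^1$, which relies on the trace embedding \eqref{eq:TraceIsL2}), yielding the new inf--sup condition \eqref{eq:inf-sup-no-Phi}. A further wrinkle you skip: that inf--sup is proved for the \emph{unweighted} pairing $\cC_\eps$ against $\mathcal{Q}^\eps=L^2(\Omega_F^\eps,1/\Phi^\eps)$, so one first obtains $r^\eps\in H^{-1}(0,T;\mathcal{Q}^\eps)$ and only then sets $\pi^\eps=r^\eps/\Phi^\eps\in H^{-1}(0,T;\mathcal{M}^\eps)$ via the isometry $r\mapsto r\omega^{-1}$ between $L^2(D,\omega^{-1})$ and $L^2(D,\omega)$. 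Without some version of this construction, your pressure step does not close.
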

\begin{proof}
From  estimates analogous to the ones used in Lemma~\ref{bilina}, it follows that $\mathcal{F}_\eps \in \left(\mathcal{V}_{div}^{\epsilon}\times\mathcal{X}^{\epsilon}\right)^*$. Therefore, by using standard methods (e.g., Galerkin method), one can easily prove that there exists a unique solution $(\bu^\eps,p^\eps)$ to the following evolution problem in $\mathcal{D}'(0,T)$:
\begin{align}\label{VProblem}
  \frac{d}{dt}\left(
    \rho  \langle \bu^\eps , \bv \rangle_{\mathcal{V}^\eps} +c_0 \langle p, \psi\rangle_{\mathcal{X}^\eps}
  \right)
  +\cA_\eps((\bu^\eps,p^\eps),(\bv,\psi))=\langle\mathcal{F}_\eps,(\bv,\psi)\rangle,
  \quad (\bv,\psi)\in \mathcal{V}^{\epsilon}_{div}\times\mathcal{X}^{\epsilon}.
\end{align}
Moreover, this solution satisfies the claimed energy estimate.

To finish the proof it remains to construct the pressure $\pi^\eps \in H^{-1}(0,T;\mathcal{M}^\eps)$ corresponding to the solution $(\bu^\eps,p^\eps)$ of \eqref{VProblem}. To begin, assume that there is a constant $B>0$ such that, for all $\psi \in \mathcal{Q}^\eps$,
\begin{equation}
\label{eq:inf-sup-no-Phi}
  B\| \psi \|_{\mathcal{Q}^\eps} \leq \sup_{\boldsymbol0 \neq \bv \in \mathcal{V}^\eps} \frac{\cC_\eps(\bv,\psi)}{ \|\bv \|_{\mathcal{V}^\eps}}.   
\end{equation}
In this case, an argument similar to that of the Navier-Stokes equations (see, e.g., \cite[Proposition III.1.1.]{TemamBook}) shows that there is a unique $r^\eps \in H^{-1}(0,T;\mathcal{Q}^\eps)$ such that \eqref{VProblem} can be equivalently rewritten as
\[
  \frac{d}{dt}\left(
    \rho  \langle \bu^\eps , \bv \rangle_{\mathcal{V}^\eps} +c_0 \langle p, \psi\rangle_{\mathcal{X}^\eps}
  \right)
  +\cA_\eps((\bu^\eps,p^\eps),(\bv,\psi))- \langle\mathcal{F}_\eps,(\bv,\psi)\rangle = 
  -\cC_\eps(\bv,r^\eps),
  \quad (\bv,\psi)\in \mathcal{V}^{\epsilon}\times\mathcal{X}^{\epsilon},  
\]
where the test function $\bv$ is not assumed solenoidal anymore. Notice now that, for any domain $D$ and any weight $\omega \in A_2$, the mapping
\[
  L^2(D,\omega^{-1}) \ni r \mapsto r \omega^{-1} \in L^2(D,\omega)
\]
is an isometric isomorphism between these spaces. We then define $\pi^\eps = r^\eps/\Phi^\eps \in H^{-1}(0,T;\mathcal{M}^\eps)$ to conclude that this is the, necessarily unique, function that satisfies
\[
  \frac{d}{dt}\left(
    \rho  \langle \bu^\eps , \bv \rangle_{\mathcal{V}^\eps} +c_0 \langle p, \psi\rangle_{\mathcal{X}^\eps}
  \right)
  +\cA_\eps((\bu^\eps,p^\eps),(\bv,\psi))- \langle\mathcal{F}_\eps,(\bv,\psi)\rangle = 
  -\cB_\eps(\bv,\pi^\eps),
  \quad (\bv,\psi)\in \mathcal{V}^{\epsilon}\times\mathcal{X}^{\epsilon}.  
\]
Adding to this the easy identity
\[
  -\cB_\eps(\bu^\eps,\varphi)
  = 0,
\]
which is valid for almost every time, and every $\varphi \in \mathcal{M}^\eps$, we obtain that the triple $(\bu^\eps,p^\eps,\pi^\eps)$ is the weak solution in the sense of Definition~\ref{SDFP_def}.

It remains then to prove \eqref{eq:inf-sup-no-Phi}. Note that we may not simply invoke \eqref{eq:Bogovskii} as the function $\psi$ does not have zero average, and the functions $\bv$ do not have the correct boundary values. Instead, we employ an argument inspired by \cite[Lemma 3.1]{NochettoVaca}. We proceed then in several steps.
\begin{enumerate}[1.]
  \item Let $\psi \in \mathcal{Q}^\eps$. Decompose it as
  \[
    \psi = \psi_1 + \psi_2, \qquad \psi_2 = \int_{\Omega_F^\eps} \psi.
  \]
  Notice that
  \[
    \int_{\Omega_F^\eps} \psi_1 = 0, \quad \int_{\Omega_F^\eps} \psi_1 \psi_2 = 0, 
    \quad \| \psi_i \|_{\mathcal{Q}^\eps} \leq C_i \| \psi \|_{\mathcal{Q}^\eps},
  \]
  where the constants $C_i$ depend only on $[\Phi^\eps]_{A_2}$ and $|\Omega_F^\eps|$.
  
  \item Notice now that, owing to \eqref{eq:Bogovskii}, there is $\bv_1 \in H^1_0(\Omega_F^\eps,\Phi^\eps)^d$ such that
  \[
    \int_{\Omega_F^\eps} \nabla \cdot \bv_1 \psi_1 = \| \psi_1 \|_{\mathcal{Q}^\eps}^2, \qquad
    \| \nabla \bv_1 \|_{L^2(\Omega_F^\eps, \Phi^\eps)^{d \times d}} \leq \frac1\beta \| \psi_1 \|_{\mathcal{Q}^\eps}.
  \]
  
  \item \label{it:StepW} Denote by $\Gamma_\eps = \partial\Omega_F^\eps \setminus \Gamma_F^1$. Recall that $\psi_2$ is a constant. Now, choose $\bw \in \mathcal{V}^\eps$ such that, if $\bn_{\Gamma_\eps}$ denotes the unit normal to $\Gamma_\eps$,
  \[
    \int_{\Omega_F^\eps} \nabla \cdot \bw = \int_{\Gamma_\eps} \bw \cdot \bn_{\Gamma_\eps} = a_\bw \neq 0, \qquad 
    \| \nabla \bw \|_{L^2(\Omega_F^\eps,\Phi^\eps)^{d \times d}} = b_\bw \neq 0.
  \]
  This is possible because, owing to \eqref{eq:TraceIsL2}, we have $\bw\cdot\bn_{\Gamma_\eps} \in L^2(\Gamma_\eps)$.
  
  Define
  \[
    \bv_2 = \frac1{a_\bw} \left( \int_{\Omega_F^\eps} \frac1{\Phi^\eps} \right)^{1/2} \signum \psi_2 \| \psi_2 \|_{\mathcal{Q}^\eps}\bw
  \]
  to get that
  \[
    \| \nabla \bv_2 \|_{L^2(\Omega_F^\eps,\Phi^\eps)^{d \times d}} = \frac{b_\bw}{a_\bw}\left( \int_{\Omega_F^\eps} \frac1{\Phi^\eps} \right)^{1/2}  \| \psi_2 \|_{\mathcal{Q}^\eps} = \frac1\gamma \| \psi_2 \|_{\mathcal{Q}^\eps},
  \]
 where $\gamma=\displaystyle\frac{a_\bw}{b_\bw}  \left( \int_{\Omega_F^\eps} \frac1{\Phi^\eps} \right)^{-1/2}$. More importantly, since $\psi_2$ is a constant,
  \[
    \int_{\Omega_F^{\eps}} \psi_2 \nabla \cdot \bv_2 = \psi_2 \int_{\Gamma_\eps} \bv_2\cdot\bn = \| \psi_2 \|_{\mathcal{Q}^\eps}^2.
  \]
  
  \item Let $\bv=\bv_1 + \gamma^2 \bv_2 \in \mathcal{V}^\eps$. Notice then than
  \[
    \| \nabla \bv \|_{L^2(\Omega_F^\eps,\Phi^\eps)^{d \times d}} \leq \left( \frac{C_1}\beta + C_2\gamma \right)\| \psi \|_{\mathcal{Q}^\eps}.
  \]
  In addition,
  \begin{align*}
    \int_{\Omega_F^\eps} \psi \nabla \cdot \bv &= \int_{\Omega_F^\eps} \psi_1 \nabla \cdot \bv_1 + \gamma^2 \int_{\Omega_F^\eps} \psi_1 \nabla \cdot \bv_2 + \psi_2 \int_{\Omega_F^\eps}  \nabla \cdot \bv_1 +\gamma^2 \int_{\Omega_F^\eps} \psi_2 \nabla \cdot \bv_2 \\
    &= \| \psi_1 \|_{\mathcal{Q}^\eps}^2 + \gamma^2 \| \psi_2 \|_{\mathcal{Q}^\eps}^2 + \gamma^2 \int_{\Omega_F^\eps} \psi_1 \nabla \cdot \bv_2 + \psi_2 \int_{\Gamma_\eps}  \bv_1 \cdot \bn \\
    &= \| \psi_1 \|_{\mathcal{Q}^\eps}^2 + \gamma^2 \| \psi_2 \|_{\mathcal{Q}^\eps}^2 + \gamma^2 \int_{\Omega_F^\eps} \psi_1 \nabla \cdot \bv_2,
  \end{align*}
  where we used that $\bv_1 \in H^1_0(\Omega_F^\eps, \Phi^\eps )^d$. Now, a combination of Young's inequality and the norm estimate on $\bv_2$ gives
  \[
    \gamma^2 \int_{\Omega_F^\eps} \psi_1 \nabla \cdot \bv_2 \geq 
    - \frac12 \| \psi_1 \|_{\mathcal{Q}^\eps}^2 - \frac{\gamma^2}2 \| \psi_2 \|_{\mathcal{Q}^\eps}^2,
  \]
  so that
  \[
    \int_{\Omega_F^\eps} \psi \nabla \cdot \bv \geq \frac12 \min\{1,\gamma^2\} \left( \| \psi_1 \|_{\mathcal{Q}^\eps}^2 + \| \psi_2 \|_{\mathcal{Q}^\eps}^2 \right) \geq \frac14 \min\{1,\gamma^2\} \| \psi \|_{\mathcal{Q}^\eps}^2.
  \]

  \item The previous estimates show that \eqref{eq:inf-sup-no-Phi} holds with
  \[
    B \geq \frac{\frac14 \min\{1,\gamma^2\}}{ \frac{C_1}\beta + C_2\gamma }. 
  \]
\end{enumerate}
This concludes the proof.
\end{proof}

\section{Discretization}\label{sec:Discretization}

Having obtained the well posedness of our diffuse interface problem, in this section we proceed with its discretization. We shall provide a numerical scheme, and an error analysis for it.

\subsection{Time discretization}\label{sub:TimeDiscr}
For time discretization we will employ the backward Euler method. 
Let $N \in \mathbb{N}$ be the number of time steps, and $\dt = T/N$ the timestep. We define, for $n = 0, \ldots, N$, the discrete times $t^n = n \dt$. Let $X$ be a normed space. For a given function $W :[0,T] \to X$, we will compute sequences $W^{\dt} = \{ W^n\}_{n=0}^N$ so that $W^n \approx W(t^n)$. The discrete time derivative is defined as
\[
  d_t W^{n+1} = \frac{W^{n+1}-W^n}{\Delta t}.                                                                                                                                                                                                                                                                                   
\]
Over such sequences, we define the following norms
\[
  \| W^\dt \|_{L^2_\dt(0,T;X)} = \left( \dt\sum_{ n=0}^{N-1} \| W^{n+1} \|_X^2 \right)^{1/2},
  \qquad
  \| W^\dt \|_{L^\infty_\dt(0,T;X)} = \max_{0\leq n \leq N} \| W^n \|_X.
\]

\subsection{Space discretization}\label{sub:FEM}
To discretize in space, we use the finite element method. Since we have assumed that $\Om$ is a polytope, it can be meshed exactly. Let $\{\Th\}_{h>0}$ be a family of conforming, simplicial triangulations of $\Om$ that is quasiuniform in the usual finite element sense \cite{ciarlet1978finite,MR851383}. The parameter $h>0$ denotes the mesh size. We assume that, for each $h>0$, we have at hand spaces
\[
  \mathcal{V}_h^\eps \subset W^{1,\infty}(\Omega_F^\eps) \subset \mathcal{V}^\eps, \qquad
  \mathcal{X}_h^\eps \subset W^{1,\infty}(\Omega_D^\eps) \subset \mathcal{X}^\eps, \qquad
  \mathcal{Q}_h^\eps \subset L^\infty(\Omega_F^\eps) \subset \mathcal{Q}^\eps,
\]
which consist of piecewise polynomials subordinate to the mesh $\Th$. These spaces will be used to approximate the fluid velocity, Darcy pressure, and a quantity related to the fluid pressure, respectively. These spaces are assumed to possess suitable approximation properties. This is quantified by the existence of operators
\[
  I_{\mathcal{V}} : \mathcal{V}^\eps \to \mathcal{V}^\eps_h, \qquad
  I_{\mathcal{X}} : \mathcal{X}^\eps \to \mathcal{X}^\eps_h, \qquad
  I_{\mathcal{Q}} : \mathcal{Q}^\eps \to \mathcal{Q}_h^\eps,
\]
that are stable, i.e., there is $C>0$ independent of $h$ such that, for all $(\bv,q,\theta) \in \mathcal{V}^\eps \times \mathcal{X}^\eps \times \mathcal{Q}^\eps$,
\begin{equation}
  \| I_{\mathcal{V}} \bv \|_{\mathcal{V}^\eps} + \| I_{\mathcal{X}} q \|_{\mathcal{X}^\eps} + \| I_{\mathcal{Q}} \theta \|_{\mathcal{Q}^\eps}
    \leq 
  C \left( \| \bv \|_{\mathcal{V}^\eps} + \|  q \|_{\mathcal{X}^\eps} + \| \theta \|_{\mathcal{Q}^\eps} \right);
\label{eq:interpolApprox}
\end{equation}
and, in addition, yield optimal approximation. This is expressed by the existence of $k \in \N$ such that, if $(\bv,q,\theta) \in \mathcal{V}^{k+1,\eps} \times \mathcal{X}^{k+1,\eps} \times \mathcal{Q}^{k,\eps}$
\[
  \| \bv-I_{\mathcal{V}} \bv \|_{\mathcal{V}^\eps} + \| q-I_{\mathcal{X}} q \|_{\mathcal{X}^\eps} + \| \theta-I_{\mathcal{Q}} \theta \|_{\mathcal{Q}^\eps}
    \leq 
  C h^k \left( \| \bv \|_{\mathcal{V}^{k+1,\eps}} + \|  q \|_{\mathcal{X}^{k+1,\eps}} + \| \theta \|_{\mathcal{Q}^{k,\eps}} \right).
\]
We comment that, owing to the fact that $\Phi^\eps, \Psi^\eps \in A_2$, reference \cite{nochetto2016piecewise} has shown that standard, piecewise polynomial, finite element spaces possess all the aforementioned properties. This reference has also provided an explicit construction of the requisite operators.

We, finally, must require that a discrete version of \eqref{eq:inf-sup-no-Phi} holds uniformly in $h$. Namely, there is $b>0$ such that, for all $h>0$,
\begin{equation}
\label{eq:DiscrInfSupWeirdBCs}
    b\| q_h \|_{\mathcal{Q}^\eps} \leq \sup_{\boldsymbol0 \neq \bv_h \in \mathcal{V}^\eps_h} \frac{ \cC_\eps(\bv_h,q_h)}{ \| \bv_h \|_{\mathcal{V}^\eps}}, \qquad \forall q_h \in \mathcal{Q}_h^\eps.   
\end{equation}
To achieve this, we recall that \cite{duran2020stability} has shown that, essentially, any classically inf--sup stable pair $(\mathcal{V}_h^\eps,\mathcal{Q}_h^\eps)$ satisfies a discrete version of \eqref{eq:Bogovskii}. This, in particular means that there is a mapping $\tilde{P}_h : H^1_0(\Omega_F^\eps,\Phi^\eps)^d \to \mathcal{V}_h^\eps \cap H^1_0(\Omega_F^\eps,\Phi^\eps)^d$ such that, for all $\bv \in H^1_0(\Omega_F^\eps,\Phi^\eps)^d$
\begin{equation}
\label{eq:discrBogovskii}
  \int_{\Omega_F^\eps} q_h \nabla\cdot (\bv - \tilde{P}_h \bv ) = 0, \quad \forall q_h \in \mathcal{Q}_h^\eps, \ \int_{\Omega_F^\eps} q_h =0, \qquad \| \grad \tilde{P}_h \bv \|_{L^2(\Omega_F^\eps,\Phi^\eps)^{d \times d}} \leq \frac1{\tilde{\beta}}\| \grad \bv \|_{L^2(\Omega_F^\eps,\Phi^\eps)^{d \times d}}.
\end{equation}
Let us show that, under this assumption, our finite element spaces are also compatible in the sense that is needed for our purposes.

\begin{lemma}[Compatibility]
Assume that the pair of finite element spaces $(\mathcal{V}_h^\eps,\mathcal{Q}_h^\eps)$ satisfies \eqref{eq:discrBogovskii}. If $h$ is sufficiently small, then \eqref{eq:DiscrInfSupWeirdBCs} holds. This, in particular, implies that there is an operator
$
  P_h : \mathcal{V}^\eps \to \mathcal{V}_h^\eps
$
which is a projection, i.e., $P_h^2 = P_h$, it is stable,  has optimal approximation properties, meaning,
\[
  \| P_h \bv \|_{\mathcal{V}^\eps} \leq \frac1b\| \bv \|_{\mathcal{V}^\eps},
  \qquad
  \| \bv - P_h \bv \|_{\mathcal{V}^\eps} \leq \left( 1 + \frac1b \right)\| \bv - I_{\mathcal{V}} \bv \|_{\mathcal{V}^\eps},
\]
and it satisfies
\[
  \int_{\Omega_F^\eps} q_h \nabla\cdot (\bv - {P}_h \bv ) = 0, \quad \forall q_h \in \mathcal{Q}_h^\eps.
\]
\end{lemma}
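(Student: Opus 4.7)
The statement has two parts: establishing the discrete inf--sup \eqref{eq:DiscrInfSupWeirdBCs}, and then constructing the projection $P_h$. For the first part we mimic the five-step continuous argument used in the proof of Theorem~\ref{WellPosednes}, replacing each continuous object by a discrete counterpart and exploiting \eqref{eq:discrBogovskii}. For the second part we follow the classical Fortin construction built from the discrete inf--sup and the stable interpolant $I_{\mathcal{V}}$.

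For the inf--sup, fix $q_h \in \mathcal{Q}_h^\eps$ and decompose $q_h = q_{h,1} + q_{h,2}$, where $q_{h,2} = \frac{1}{|\Omega_F^\eps|}\int_{\Omega_F^\eps} q_h$ is constant (hence in $\mathcal{Q}_h^\eps$) and $q_{h,1} = q_h - q_{h,2}$ has zero mean. As in step~1 of the continuous argument, $\Phi^\eps \in A_2$ gives $\|q_{h,i}\|_{\mathcal{Q}^\eps} \lesssim \|q_h\|_{\mathcal{Q}^\eps}$. For the zero-mean piece $q_{h,1}$, apply the continuous Bogovski\u\i{} estimate \eqref{eq:Bogovskii} to obtain $\bv_1 \in H^1_0(\Omega_F^\eps,\Phi^\eps)^d$ with $\cC_\eps(\bv_1,q_{h,1}) = \|q_{h,1}\|_{\mathcal{Q}^\eps}^2$ and the appropriate norm bound, and then set $\bv_{h,1} = \tilde P_h \bv_1 \in \mathcal{V}_h^\eps$; since $\int_{\Omega_F^\eps} q_{h,1}=0$, the first identity of \eqref{eq:discrBogovskii} gives $\cC_\eps(\bv_{h,1},q_{h,1}) = \cC_\eps(\bv_1,q_{h,1}) = \|q_{h,1}\|_{\mathcal{Q}^\eps}^2$, while the second yields the companion stability bound.

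For the constant piece $q_{h,2}$, fix once and for all a reference $\bw \in \mathcal{V}^\eps$ with $a_\bw = \int_{\Gamma_\eps} \bw\cdot\bn_{\Gamma_\eps} \neq 0$, just as in step~\ref{it:StepW} of the continuous proof, and set $\bw_h = I_{\mathcal{V}} \bw \in \mathcal{V}_h^\eps$. The approximation properties of $I_{\mathcal{V}}$ together with the divergence theorem give $\bigl|\int_{\Omega_F^\eps} \nabla\cdot(\bw-\bw_h)\bigr| \lesssim \|\bw - I_{\mathcal{V}}\bw\|_{\mathcal{V}^\eps} \to 0$ as $h \to 0$, so for $h$ sufficiently small $a_{\bw_h} := \int_{\Omega_F^\eps} \nabla\cdot \bw_h$ is bounded away from zero uniformly in $h$. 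Scaling $\bw_h$ exactly as $\bv_2$ is built in the continuous proof yields $\bv_{h,2} \in \mathcal{V}_h^\eps$ with $\cC_\eps(\bv_{h,2},q_{h,2}) = \|q_{h,2}\|_{\mathcal{Q}^\eps}^2$ and a controlled norm. Setting $\bv_h = \bv_{h,1} + \gamma_h^2 \bv_{h,2}$ for a suitable $\gamma_h>0$ and repeating the Young-inequality computation of step~4 yields \eqref{eq:DiscrInfSupWeirdBCs} with $b>0$ independent of $h$.

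For the projection, define $P_h \bv = I_{\mathcal{V}} \bv + \bfxi_h$, where $\bfxi_h \in \mathcal{V}_h^\eps$ is the (essentially) unique element satisfying $\cC_\eps(\bfxi_h,q_h) = \cC_\eps(\bv - I_{\mathcal{V}}\bv, q_h)$ for every $q_h \in \mathcal{Q}_h^\eps$, with minimal $\mathcal{V}^\eps$--norm among all such elements. Existence and the bound $\|\bfxi_h\|_{\mathcal{V}^\eps} \leq \tfrac1b \|\bv - I_{\mathcal{V}}\bv\|_{\mathcal{V}^\eps}$ follow from \eqref{eq:DiscrInfSupWeirdBCs} via classical saddle-point theory. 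Since $I_{\mathcal{V}}$ is itself a projection onto $\mathcal{V}_h^\eps$, one has $\bfxi_h = \boldsymbol0$ whenever $\bv \in \mathcal{V}_h^\eps$, so that $P_h^2 = P_h$. The divergence-preserving identity $\cC_\eps(\bv - P_h\bv,q_h) = 0$ is built into the construction, and the stated stability and approximation bounds then follow from the triangle inequality together with \eqref{eq:interpolApprox}.

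\textbf{Main obstacle.} The delicate step is the construction of the constant-mode surrogate $\bv_{h,2}$: we must produce a discrete function whose signed flux through $\Gamma_\eps$ is uniformly bounded below, which is exactly what forces the smallness threshold on $h$. All other components of the continuous argument transfer directly, provided one keeps careful track of the dependencies of the constants on $[\Phi^\eps]_{A_2}$ and on $|\Omega_F^\eps|$ so that, with $\eps$ fixed, the resulting $b$ is independent of $h$.
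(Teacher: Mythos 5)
Your proposal is correct and follows essentially the same route as the paper: decompose $q_h$ into a zero-mean part handled by \eqref{eq:discrBogovskii} and a constant part handled by $\bw_h = I_{\mathcal{V}}\bw$ with a smallness condition on $h$, then obtain $P_h$ via the Fortin criterion. The only cosmetic differences are that you control $a_{\bw_h}$ through the divergence theorem and the interpolation error rather than through the trace inequality on $\Gamma_\eps$ as the paper does, and that you spell out the Fortin construction explicitly where the paper simply cites it.
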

\begin{proof}
To begin, we remark that the approximation properties of $P_h$, if it exists, immediately follow from its stability and the fact that it is a projection. Next, we recall that the existence of $P_h$ as a consequence of \eqref{eq:DiscrInfSupWeirdBCs} is known in the literature as the Fortin criterion \cite{MR851383}.

It remains then to prove \eqref{eq:DiscrInfSupWeirdBCs}. To achieve this, we will follow the same ideas that allowed us to obtain \eqref{eq:inf-sup-no-Phi}. Indeed, we can employ \eqref{eq:discrBogovskii} instead of \eqref{eq:Bogovskii}. The only subtle point is that, in Step~\ref{it:StepW}, one may be tempted to choose an arbitrary $\bw_h \in \mathcal{V}_h^\eps$. However, this may lead the involved constants to be dependent of $h$. We then, instead, need to set $\bw_h = I_{\mathcal{V}} \bw$, where $\bw \in \mathcal{V}^\eps$ is the function in that proof. Then, for small enough $h$, we must have
\[
  \frac12 b_\bw \leq \| \bw \|_{\mathcal{V}^\eps} - \| \bw - \bw_h \|_{\mathcal{V}^\eps} \leq \| \bw_h \|_{\mathcal{V}^\eps} \leq C\| \bw \|_{\mathcal{V}^\eps} = Cb_\bw
\]
and 
\[
  \left| \int_{\Gamma_\eps} \bw_h \cdot \bn - a_\bw \right| \leq |\Gamma_\eps|^{1/2} \| \bw - \bw_h \|_{L^2(\Gamma_\eps)} \leq 
  C_{tr}|\Gamma_\eps|^{1/2} \| \bw - \bw_h \|_{\mathcal{V}^\eps} \leq \frac14 a_\bw,
\]
where we used the trace inequality of \cite{MR1258430}.

This concludes the proof.
\end{proof}

\subsection{The scheme}\label{sub:TheNumScheme}
We are now ready to present the scheme and discuss its basic properties. We begin by setting
\[
  (\bu_h^0 , p_h^0 ) = (P_h\bu_0^\eps , I_{\mathcal{X}} p_0^\eps )\in \mathcal{V}^\eps_h \times \mathcal{X}^\eps_h.
\]
Then, for $n = 0, \ldots, N-1$, we seek for $(\bu_h^{n+1}, p_h^{n+1}, \theta_h^{n+1} )\in \mathcal{V}^\eps_h \times \mathcal{X}^\eps_h \times \mathcal{Q}_h^\eps$ such that, for every $(\bv_{h},\psi_h,\varphi_h) \in \mathcal{V}_{h}^{\eps} \times \mathcal{X}^{\eps}_{F,h} \times \mathcal{Q}_h^\eps$, we have:
\begin{multline}
\label{approxWF}
  \rho \int_{\Omega_F^\eps} d_t \bu_{h}^{n+1} \cdot\bv_{h} \Phi^\eps +c_0\int_{\Omega_D^\eps} d_t p_{h}^{n+1} \psi_{h}  \Psi^\eps 
  + \cA_\eps( (\bu_h^{n+1},p_h^{n+1}), (\bv_h,\psi_h) ) \\
  + \cC_\eps(\bv_h,\theta_h^{n+1}) -\cC_\eps(\bu_h^{n+1},\varphi_h)  =
  \langle \mathcal{F}_\eps, (\bv_h,\psi_h) \rangle
\end{multline}
For details on implementation, the reader is referred to see Section~\ref{Sec:numerics}.

\begin{remark}[fluid pressure]
Notice that, in scheme \eqref{approxWF}, the terms involving incompressibility and the fluid pressure are missing the weight $\Phi^\eps$. This is due to the fact that we do have the compatibility condition \eqref{eq:DiscrInfSupWeirdBCs}, but not a similar one involving the weight. Nevertheless, if one wishes to compute an approximate pressure, this can be obtained a posteriori via
\[
  \pi_h^\dt = \frac1{\Phi^\eps} \theta_h^\dt.
\]
\end{remark}

Owing to the structure of the scheme we immediately have the following existence, uniqueness, and stability result for \eqref{approxWF}.

\begin{theorem}[Discrete stability]\label{thm:DiscrStability}
In the setting of Theorem~\ref{WellPosednes} we have that, for every $\dt$ and $h$, scheme \eqref{approxWF} has a unique solution. Moreover, this solution satisfies the following energy identity 
\begin{multline*}
  \frac{\dt}{2}\left[
    \rho d_t \|\bu_h^{n+1}\|^2_{L^2(\Omega_F^\eps,\Phi^\eps)^d} + \rho \dt \| d_t \bu_h^{n+1} \|^2_{L^2(\Omega_F^\eps,\Phi^\eps)^d}
    +c_0 d_t \|p_h^{n+1}\|^2_{L^2(\Omega_D^\eps, \Psi^\eps )} + c_0 \dt \|d_t p_h^{n+1}\|^2_{L^2(\Omega_D^\eps, \Psi^\eps )}
  \right] \\
  +2\mu \dt\|\bD(\bu_h^{n+1})\|^2_{L^2(\Omega_F^\eps,\Phi^\eps)^{{d \times d}}}
  +\frac{\alpha_{BJ}}{2\eps} \dt \sum_{i=1}^{d-1} \int_{\ell^\eps} | \bu_h^{n+1} \cdot\tilde \bftau_i |^2 |\nabla \dist_\Gamma|
  +\dt \|\boldsymbol \kappa_\eps^{\frac12} \nabla p^\eps\|^2_{L^2(\Omega_D^\eps, \Psi^\eps )^{{d}}}= \\
  \rho\dt\int_{\Omega_F^\eps} \bF^\eps \cdot \bu_h^{n+1} \Phi^\eps + \dt \int_{\Omega_D^\eps} g^\eps p_h^{n+1}  \Psi^\eps .
\end{multline*}
\end{theorem}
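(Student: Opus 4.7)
The plan is to proceed in two steps: first establish existence and uniqueness of the fully discrete iterates by a standard saddle-point argument, and then derive the energy identity by testing with the solution itself and invoking the backward-Euler product rule in each weighted inner product.

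For existence and uniqueness, I would observe that, for given $(\bu_h^n, p_h^n)$ and data, scheme \eqref{approxWF} is a square linear system for the unknown $(\bu_h^{n+1}, p_h^{n+1}, \theta_h^{n+1})$ in the finite-dimensional space $\mathcal{V}_h^\eps \times \mathcal{X}_h^\eps \times \mathcal{Q}_h^\eps$. Since the system is square, it suffices to prove uniqueness. Setting the data and previous iterate to zero and testing with $(\bv_h,\psi_h,\varphi_h) = (\bu_h^{n+1}, p_h^{n+1}, \theta_h^{n+1})$, the pressure pairings $\cC_\eps(\bu_h^{n+1},\theta_h^{n+1}) - \cC_\eps(\bu_h^{n+1},\theta_h^{n+1})$ cancel. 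Then the coercivity of $\cA_\eps$ from Lemma~\ref{bilina}, together with the nonnegative mass contributions from $d_t$, forces $\bu_h^{n+1} = \boldsymbol0$ and $p_h^{n+1} = 0$. With these vanishing, the equation reduces to $\cC_\eps(\bv_h, \theta_h^{n+1}) = 0$ for all $\bv_h \in \mathcal{V}_h^\eps$, and the discrete inf-sup condition \eqref{eq:DiscrInfSupWeirdBCs} then yields $\theta_h^{n+1} = 0$.

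For the energy identity, I would test \eqref{approxWF} with $(\bv_h,\psi_h,\varphi_h)=(\bu_h^{n+1}, p_h^{n+1}, \theta_h^{n+1})$. The two pressure terms cancel as above. Inside $\cA_\eps((\bu_h^{n+1}, p_h^{n+1}), (\bu_h^{n+1}, p_h^{n+1}))$, the antisymmetric interface coupling contributions $\tfrac1{2\eps}\int_{\ell^\eps} p_h^{n+1}\,\bu_h^{n+1}\cdot\nabla\dist_\Gamma$ and $-\tfrac1{2\eps}\int_{\ell^\eps} p_h^{n+1}\,\bu_h^{n+1}\cdot\nabla\dist_\Gamma$ also cancel, leaving only the dissipative viscous, Darcy, and Beavers--Joseph--Saffman terms appearing on the left of the claimed identity. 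For each of the two temporal contributions I would apply the elementary identity
\[
  \dt \int_D d_t w^{n+1}\, w^{n+1}\, \omega = \tfrac12\bigl(\|w^{n+1}\|_{L^2(D,\omega)}^2 - \|w^n\|_{L^2(D,\omega)}^2\bigr) + \tfrac{\dt^2}{2}\|d_t w^{n+1}\|_{L^2(D,\omega)}^2,
\]
which follows directly from $(a-b)\,a = \tfrac12\bigl(a^2-b^2+(a-b)^2\bigr)$. Rewriting $\tfrac12(\|w^{n+1}\|^2 - \|w^n\|^2) = \tfrac{\dt}{2}\, d_t\|w^{n+1}\|^2$, and using this with weight $\omega=\Phi^\eps$ for the velocity term and $\omega=\Psi^\eps$ for the Darcy pressure term, produces the two bracketed expressions in the statement. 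Multiplying the whole tested equation by $\dt$ and collecting terms yields the claimed energy identity.

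I do not expect a substantive obstacle here: all ingredients — coercivity of $\cA_\eps$ from Lemma~\ref{bilina}, the antisymmetry of the diffuse interface coupling, the discrete inf-sup condition \eqref{eq:DiscrInfSupWeirdBCs}, and standard backward-Euler energy bookkeeping — are already in place. The only minor subtlety is keeping careful track of the two distinct weights $\Phi^\eps$ and $\Psi^\eps$, but since both belong to $A_2$ this is routine and the argument mirrors the continuous energy estimate of Theorem~\ref{WellPosednes}.
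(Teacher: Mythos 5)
Your proposal is correct and follows essentially the same route as the paper's proof: the paper likewise invokes the coercivity of $\cA_\eps$ restricted to the discrete spaces together with the discrete inf-sup condition \eqref{eq:DiscrInfSupWeirdBCs} for existence and uniqueness, and obtains the energy identity by testing with $\dt\,(\bu_h^{n+1},p_h^{n+1},\theta_h^{n+1})$ and applying the polarization identity. You simply spell out the details (the square-system reduction to uniqueness and the cancellation of the skew-symmetric interface and pressure pairings) that the paper leaves implicit.
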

\begin{proof}
The proof mimics, with discrete arguments, that of Theorem~\ref{WellPosednes}. Indeed, coercivity of the bilinear form $\cA_\eps$ is transferred to $\mathcal{V}_h^\eps \times \mathcal{X}_h^\eps$. In addition, the compatibility condition \eqref{eq:DiscrInfSupWeirdBCs} yields the existence and uniqueness of $\theta_h^{k+1}$.

Finally, to obtain the energy identity, it suffices to set $(\bv_h,\psi_h,\varphi_h) = \dt (\bu_h^{n+1},p_h^{n+1},\theta_h^{n+1})$ and apply the classical polarization identity.
\end{proof}

\subsection{Error analysis}\label{Sec_error}

We now proceed to carry out an error analysis for scheme \eqref{approxWF}. We will do so under the assumption that, for some $k \in \mathbb{N}$, we have
\begin{equation}
\label{eq:SolIsSmooth}
  \bu^\eps \in C^2([0,T]; \mathcal{V}^{k+1,\eps}), \qquad p^\eps \in C^2([0,T]; \mathcal{X}^{k+1,\eps}), \qquad \pi^\eps \in C^1([0,T]; \mathcal{M}^{k,\eps}).
\end{equation}
These smoothness assumptions are just made for simplicity. Similar results can be obtained under less stringent conditions. We begin by constructing sequences $\bu^\dt = \{ \bu^{n} = \bu^\eps(t^n) \}_{n=0}^N$, $p^\dt = \{ p^{n} = p^\eps(t^n) \}_{n=0}^N$, and $\pi^\dt = \{ \pi^{n} = \pi^\eps(t^n) \}_{n=0}^N$.
Notice that, owing to the assumed smoothness, we have
\begin{equation*}
  \dt \sum_{n=1}^{N-1} \| d_t \bu^{n+1} \|^2_{L^2(\Omega_F^\eps,\Phi^\eps)^d} \lesssim \| \partial_t \bu^\eps \|^2_{L^2(0,T; L^2(\Omega_F^\eps,\Phi^\eps)^d)},
\end{equation*}
with a similar estimate for $p^\dt$. The main result of this section is an error analysis for \eqref{approxWF}.

We begin by observing that, upon defining $\theta = \pi^\eps \Phi^\eps \in L^2(0,T;\mathcal{Q}^\eps)$, problem \eqref{SDFP_weak} can be rewritten as
\begin{equation}
    \frac{d}{dt}\left(
   \rho \langle \bu^{\eps} , \bv \rangle_{\mathcal{V}^\eps} +c_0\langle p^{\eps}, \psi \rangle_{\mathcal{X}^{\eps}}  
  \right)
  + \cA_\eps( (\bu^\eps,\psi^\eps), (\bv,\psi) )
  + \cC_\eps(\bv,\theta) - \cC_\eps( \bu^\eps, \varphi )
  = \langle\mathcal{F}_\eps,(\bv,\psi)\rangle ,
\label{eq:NewWeakDiffuse}
\end{equation}
where now $\varphi \in \mathcal{Q}^\eps$. It is this form of our problem that can be compared to the scheme \eqref{approxWF}. We begin with a time-consistency estimate.

\begin{lemma}[Consistency]\label{lem:ConsistencyDerivatives}
For $n = 0, \ldots, N-1$ define $\cR_\eps^{n+1} \in ( \mathcal{V}^\eps \times \mathcal{X}^\eps )^*$ via
\[
  \langle \cR_\eps^{n+1}, (\bv,\psi) \rangle = \rho \int_{\Omega_F^\eps} \left( d_t\bu - \partial_t \bu \right)^{n+1} \cdot \bv \Phi^\eps + c_0 \int_{\Omega_D^\eps} \left( d_t p - \partial_t p \right)^{n+1} \psi  \Psi^\eps .
\]
Then, assuming \eqref{eq:SolIsSmooth}, we have
\[
  \| \cR_\eps^\dt \|_{L^2_\dt(0,T;( \mathcal{V}^\eps \times \mathcal{X}^\eps )^*)} \lesssim \dt,
\]
where the hidden constants depend on $(\bu,p)$ and $T$, but not on the discretization parameters.
\end{lemma}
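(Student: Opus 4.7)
The plan is to write the difference $d_t\bu^{n+1}-\partial_t\bu^{n+1}$ (and its pressure analogue) in integral form via a Taylor expansion with remainder, then control the resulting dual norm using Cauchy--Schwarz with respect to both the spatial weighted measure and the temporal integration.

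First, I would exploit the assumed regularity $\bu^\eps \in C^2([0,T];\mathcal{V}^{k+1,\eps}) \hookrightarrow C^2([0,T];L^2(\Omega_F^\eps,\Phi^\eps)^d)$ to apply the Taylor formula with integral remainder. Namely, for every $x \in \Omega_F^\eps$,
\[
 \bu^{n}(x) = \bu^{n+1}(x) - \dt\,\partial_t\bu^{n+1}(x) - \int_{t^n}^{t^{n+1}} (t^n-s)\,\partial_{tt}\bu(x,s)\,ds ,
\]
which rearranges to
\[
 d_t \bu^{n+1}(x) - \partial_t \bu^{n+1}(x) = -\frac{1}{\dt}\int_{t^n}^{t^{n+1}} (s-t^n)\,\partial_{tt}\bu(x,s)\,ds.
\]
An identical identity holds for the scalar $p^\eps$.

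Next, I would estimate the weighted $L^2$ norm of this remainder. Applying Cauchy--Schwarz in the temporal variable and then integrating against the weight $\Phi^\eps$ over $\Omega_F^\eps$ yields
\[
 \|d_t \bu^{n+1} - \partial_t \bu^{n+1}\|_{L^2(\Omega_F^\eps,\Phi^\eps)^d}^2
  \leq \frac{\dt}{3}\int_{t^n}^{t^{n+1}} \|\partial_{tt}\bu(\cdot,s)\|_{L^2(\Omega_F^\eps,\Phi^\eps)^d}^2\,ds,
\]
and the analogous bound holds for $p^\eps$ with weight $\Psi^\eps$. Then, for any test pair $(\bv,\psi)\in \mathcal{V}^\eps\times\mathcal{X}^\eps$, the Cauchy--Schwarz inequality applied directly to the definition of $\cR_\eps^{n+1}$ gives
\[
 |\langle \cR_\eps^{n+1},(\bv,\psi)\rangle| \lesssim \bigl(\|d_t\bu^{n+1}-\partial_t\bu^{n+1}\|_{L^2(\Omega_F^\eps,\Phi^\eps)^d} + \|d_t p^{n+1}-\partial_t p^{n+1}\|_{L^2(\Omega_D^\eps,\Psi^\eps)}\bigr) \|(\bv,\psi)\|_{\mathcal{V}^\eps\times\mathcal{X}^\eps},
\]
so the dual norm of $\cR_\eps^{n+1}$ is controlled by the sum of those two weighted $L^2$ norms.

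Finally, I would square, multiply by $\dt$, and sum from $n=0$ to $N-1$. The temporal integrals over $[t^n,t^{n+1}]$ telescope into an integral over $[0,T]$, producing
\[
 \dt\sum_{n=0}^{N-1}\|\cR_\eps^{n+1}\|_{(\mathcal{V}^\eps\times\mathcal{X}^\eps)^*}^2 \lesssim \dt^2\bigl(\|\partial_{tt}\bu^\eps\|_{L^2(0,T;L^2(\Omega_F^\eps,\Phi^\eps)^d)}^2 + \|\partial_{tt}p^\eps\|_{L^2(0,T;L^2(\Omega_D^\eps,\Psi^\eps))}^2\bigr),
\]
and the right-hand side is bounded under the smoothness assumption \eqref{eq:SolIsSmooth}. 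Taking square roots yields the claimed $\mathcal{O}(\dt)$ bound. I do not anticipate any significant obstacle here: the argument is a standard Peano/Taylor remainder estimate, with the only mild subtlety being that one carries the weights $\Phi^\eps$ and $\Psi^\eps$ throughout, which is harmless since Fubini's theorem and Cauchy--Schwarz apply to the weighted measures exactly as in the unweighted case.
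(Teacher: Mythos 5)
Your proposal is correct and follows exactly the route the paper indicates: the paper's proof is the one-line remark that the result ``is standard and follows from a Taylor expansion,'' and your Taylor expansion with integral remainder, temporal Cauchy--Schwarz, and summation over time steps is precisely the standard argument being referenced, with the weights $\Phi^\eps$ and $\Psi^\eps$ carried through harmlessly as you note.
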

\begin{proof}
Using \eqref{eq:SolIsSmooth} the proof is standard and follows from a Taylor expansion.
\end{proof}

We can now proceed with our main error estimate.

\begin{theorem}[Error estimate]\label{MainThm}
Let $(\bu^\eps,p^\eps,\pi^\eps)$ solve \eqref{SDFP_weak} and satisfy \eqref{eq:SolIsSmooth}. Let $(\bu_h^\dt,p_h^\dt,\theta_h^\dt)$ solve \eqref{approxWF} with initial data
\[
  (\bu^{0}_h, p^0_h ) = (P_h \bu_{0}^\eps, I_{\mathcal X} p_0^{\eps}).
\]
The following error estimate holds
\begin{equation}
  \begin{aligned}
    \| \bu^\dt - \bu_h^\dt \|_{L^\infty_\dt(0,T;L^2(\Omega_F^\eps,\Phi^\eps)^d)} + \| p^\dt - p_h^\dt \|_{L^\infty_\dt(0,T;L^2(\Omega_D^\eps,  \Psi^\eps ))} &\lesssim \dt + h^{k+1}, \\
    \| \bu^\dt - \bu_h^\dt \|_{L^2_\dt(0,T;\mathcal{V}^\eps)} + \| p^\dt - p_h^\dt \|_{L^2_\dt(0,T;\mathcal{X}^\eps)} &\lesssim \dt + h^{k},
  \end{aligned}
\end{equation}
where the hidden constants depend on the material parameters, higher order smoothness norms on the solution, $T$, $\Omega$, $[\Phi^\eps]_{A_2}$, and $[\Psi^\eps]_{A_2}$. However, they are independent of $\dt$ and $h$.
\end{theorem}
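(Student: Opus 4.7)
The plan is to follow a standard, but weighted, finite element energy argument based on splitting the errors and exploiting the special projection property of $P_h$. First, I introduce the decompositions
\[
  \bu^{n+1}-\bu_h^{n+1} = \eta_u^{n+1} + \xi_u^{n+1}, \qquad \eta_u^{n+1}=\bu^{n+1}-P_h\bu^{n+1}, \quad \xi_u^{n+1}=P_h\bu^{n+1}-\bu_h^{n+1},
\]
and similarly for $p^{n+1}-p_h^{n+1}=\eta_p^{n+1}+\xi_p^{n+1}$ using $I_{\mathcal X}$ and for $\theta^{n+1}-\theta_h^{n+1}=\eta_\theta^{n+1}+\xi_\theta^{n+1}$ using $I_{\mathcal Q}$. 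Subtracting scheme \eqref{approxWF} from \eqref{eq:NewWeakDiffuse} at $t^{n+1}$ (after inserting the consistency residual $\cR_\eps^{n+1}$ from Lemma~\ref{lem:ConsistencyDerivatives}) and testing with arbitrary $(\bv_h,\psi_h,\varphi_h)\in\mathcal{V}_h^\eps\times\mathcal{X}_h^\eps\times\mathcal{Q}_h^\eps$ gives an error equation involving $d_t\xi_u^{n+1}$, $d_t\xi_p^{n+1}$, $\cA_\eps$ of the $\xi$'s, and the two $\cC_\eps$ terms. Then I choose $(\bv_h,\psi_h,\varphi_h)=(\xi_u^{n+1},\xi_p^{n+1},\xi_\theta^{n+1})$, which makes the diagonal $\cC_\eps(\xi_u^{n+1},\xi_\theta^{n+1})$ contributions cancel.

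The key structural observation is that, by the projection identity in the compatibility lemma, $\cC_\eps(\eta_u^{n+1},\xi_\theta^{n+1})=-\int_{\Omega_F^\eps}\xi_\theta^{n+1}\,\nabla\cdot\eta_u^{n+1}=0$ since $\xi_\theta^{n+1}\in\mathcal{Q}_h^\eps$. So the right-hand side of the error identity only contains (i) the interpolation time-derivative terms $\int d_t\eta_u^{n+1}\xi_u^{n+1}\Phi^\eps$ and $\int d_t\eta_p^{n+1}\xi_p^{n+1}\Psi^\eps$, (ii) the bilinear form term $\cA_\eps((\eta_u^{n+1},\eta_p^{n+1}),(\xi_u^{n+1},\xi_p^{n+1}))$, (iii) the single remaining pressure-approximation term $\cC_\eps(\xi_u^{n+1},\eta_\theta^{n+1})$, and (iv) the consistency residual $\langle \cR_\eps^{n+1},(\xi_u^{n+1},\xi_p^{n+1})\rangle$. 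The left-hand side is bounded below using the polarization identity $\int d_t\xi_u^{n+1}\cdot\xi_u^{n+1}\Phi^\eps=\tfrac12 d_t\|\xi_u^{n+1}\|^2_{L^2(\Omega_F^\eps,\Phi^\eps)}+\tfrac{\dt}{2}\|d_t\xi_u^{n+1}\|^2_{L^2(\Omega_F^\eps,\Phi^\eps)}$ (and its $\psi$ counterpart), plus $D_2\|(\xi_u^{n+1},\xi_p^{n+1})\|_{\mathcal{V}^\eps\times\mathcal{X}^\eps}^2$ from the coercivity proved in Lemma~\ref{bilina}.

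Each right-hand side term is then estimated by Cauchy--Schwarz plus Young's inequality, in every case choosing the Young parameter small enough so that the corresponding $\|(\xi_u,\xi_p)\|_{\mathcal{V}^\eps\times\mathcal{X}^\eps}^2$ or $\|\xi_u\|^2_{L^2(\Omega_F^\eps,\Phi^\eps)}+\|\xi_p\|^2_{L^2(\Omega_D^\eps,\Psi^\eps)}$ term is absorbed on the left. The bound on (iii) exploits the natural duality between $\mathcal{Q}^\eps=H^0(\Omega_F^\eps,1/\Phi^\eps)$ and the velocity norm through the weight splitting $\int\nabla\cdot\xi_u^{n+1}\eta_\theta^{n+1}=\int(\Phi^\eps)^{1/2}\nabla\cdot\xi_u^{n+1}\cdot(\Phi^\eps)^{-1/2}\eta_\theta^{n+1}\leq \|\xi_u^{n+1}\|_{\mathcal{V}^\eps}\|\eta_\theta^{n+1}\|_{\mathcal{Q}^\eps}$. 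The $\cA_\eps$ contribution is handled by Lemma~\ref{bilina} and the trace lemma for the interface terms. Multiplying by $\dt$, summing from $n=0$ to $N-1$, using $\xi_u^0=0$, $\xi_p^0=I_{\mathcal X}p_0^\eps-p_h^0=0$, and applying the discrete Gr\"onwall inequality yields control of $\|\xi_u^\dt\|_{L^\infty_\dt L^2_{\Phi^\eps}}+\|\xi_p^\dt\|_{L^\infty_\dt L^2_{\Psi^\eps}}+\|(\xi_u^\dt,\xi_p^\dt)\|_{L^2_\dt(\mathcal{V}^\eps\times\mathcal{X}^\eps)}$ by $\dt+h^k$ times data-dependent constants (using that the $L^\infty_\dt L^2$-part also receives an $h^{k+1}$ interpolation contribution from $\eta_u,\eta_p$). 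A triangle inequality combining these $\xi$-bounds with the approximation errors for $\eta_u,\eta_p$ then delivers the claimed rates.

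The main obstacle I anticipate is the bookkeeping around the pressure/incompressibility terms: because the discrete problem uses the unweighted form $\cC_\eps$ while the test space $\mathcal{Q}^\eps$ uses the reciprocal weight $1/\Phi^\eps$, care is required when invoking the projection property of $P_h$ (which removes $\cC_\eps(\eta_u,\xi_\theta)$) and when bounding the complementary term $\cC_\eps(\xi_u,\eta_\theta)$ in the correct dual pairing. Once this is set up with $\theta=\pi^\eps\Phi^\eps\in\mathcal{Q}^\eps$ as in \eqref{eq:NewWeakDiffuse}, and the interpolation estimates for $I_{\mathcal V},I_{\mathcal X},I_{\mathcal Q}$ from \cite{nochetto2016piecewise} are invoked with constants depending only on $[\Phi^\eps]_{A_2},[\Psi^\eps]_{A_2}$, the rest of the argument is routine.
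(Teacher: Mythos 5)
Your setup (error equation, cancellation of $\cC_\eps(\xi_u^{n+1},\xi_\theta^{n+1})$, vanishing of $\cC_\eps(\eta_u^{n+1},\xi_\theta^{n+1})$ via the Fortin property of $P_h$, coercivity plus Young plus discrete Gr\"onwall) is sound and does deliver the second, energy-norm estimate at rate $\dt+h^k$. The genuine gap is in the first estimate. Because you decompose with the Fortin operator and the interpolants $I_{\mathcal X}, I_{\mathcal Q}$, the terms $\cA_\eps((\eta_u^{n+1},\eta_p^{n+1}),(\xi_u^{n+1},\xi_p^{n+1}))$ and $\cC_\eps(\xi_u^{n+1},\eta_\theta^{n+1})$ survive on the right-hand side. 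After Young's inequality and absorption into the coercive left-hand side they leave residuals of size $\|\eta_u^{n+1}\|^2_{\mathcal{V}^\eps}+\|\eta_p^{n+1}\|^2_{\mathcal{X}^\eps}+\|\eta_\theta^{n+1}\|^2_{\mathcal{Q}^\eps}\lesssim h^{2k}$, and these enter the Gr\"onwall bound for $\max_n\bigl(\|\xi_u^{n}\|^2_{L^2(\Omega_F^\eps,\Phi^\eps)^d}+\|\xi_p^{n}\|^2_{L^2(\Omega_D^\eps,\Psi^\eps)}\bigr)$. Hence your argument only yields $\|\xi_u^\dt\|_{L^\infty_\dt(0,T;L^2(\Omega_F^\eps,\Phi^\eps)^d)}\lesssim \dt+h^{k}$, and the triangle inequality then gives $\dt+h^k$ for the $L^\infty_\dt L^2$ error --- one order short of the claimed $\dt+h^{k+1}$. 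Your own closing sentence (``control \ldots by $\dt+h^k$'') concedes exactly this; the $h^{k+1}$ contribution of $\eta_u,\eta_p$ cannot repair a $\xi$-part that is only $O(h^k)$.

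The paper avoids this by the classical elliptic-projection (Wheeler) device, adapted to the coupled weighted setting: it defines $(\bU_h^{n+1},P_h^{n+1},\Theta_h^{n+1})$ as the $\Phi^\eps$-weighted Stokes projection of $(\bu^{n+1},\theta^{n+1})$ together with the $\Psi^\eps$-weighted Ritz projection of $p^{n+1}$, characterized by
\begin{equation*}
  \cA_\eps\bigl((\bU_h^{n+1}-\bu^{n+1},P_h^{n+1}-p^{n+1}),(\bv_h,\psi_h)\bigr)
  +\cC_\eps(\bv_h,\Theta_h^{n+1}-\theta^{n+1})-\cC_\eps(\bU_h^{n+1}-\bu^{n+1},\varphi_h)=0 ,
\end{equation*}
so that \emph{all} spatial terms involving the projection error drop out of the error equation. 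What remains on the right-hand side is only the consistency residual and $\rho\int_{\Omega_F^\eps} d_t(\bu-\bU_h)^{n+1}\cdot\bv_h\,\Phi^\eps + c_0\int_{\Omega_D^\eps} d_t(p-P_h)^{n+1}\psi_h\,\Psi^\eps$, which pair with the test functions in weighted $L^2$ and are of size $h^{k+1}$ by the optimal $L^2$ approximation of these weighted projections (cited from the stability results for the weighted Stokes and Ritz projections). This is what produces the superconvergent $L^\infty_\dt L^2$ rate. To repair your proof, replace $P_h$ and $I_{\mathcal Q}$ by this coupled projection (or, equivalently, supplement your argument with a duality step delivering $L^2$-optimality); as written, your proposal proves only the second displayed estimate of the theorem.
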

\begin{proof}
Owing to the linearity of the problem, the assumed smoothness of the data, the compatibility condition \eqref{eq:DiscrInfSupWeirdBCs}, and the approximation properties of finite elements on weighted spaces, the proof is rather standard and proceeds, for instance, like the proof of error estimates for the time dependent Stokes problem; see, for instance \cite[Chapter 6]{MR2050138}.

Define $\be_\bu^\dt = \bu^\dt - \bu_h^\dt$, $e_p^\dt  = p^\dt - p_h^\dt$, and $e_\theta^\dt = \theta^\dt - \theta_h^\dt$. Upon restricting, in \eqref{eq:NewWeakDiffuse}, the test functions to lie on the corresponding finite element spaces, we get
\begin{multline*}
  \rho \int_{\Omega_F^\eps} d_t \be_{\bu}^{n+1} \cdot\bv_{h} \Phi^\eps +c_0\int_{\Omega_D^\eps} d_t e_{p}^{n+1} \psi_{h}  \Psi^\eps 
  + \cA_\eps( (\be_{\bu}^{n+1},e_p^{n+1}), (\bv_h,\psi_h) ) \\
  + \cC_\eps(\bv_h,e_\theta^{n+1}) -\cC_\eps(\be_\bu^{n+1},\varphi_h)  =
  \langle \mathcal{R}_\eps^{n+1}, (\bv_h,\psi_h) \rangle,
\end{multline*}
where $\cR^\dt_\eps$ was defined in Lemma~\ref{lem:ConsistencyDerivatives}.

Let now $(\bU_h^\dt,\Theta_h^\dt) \subset \mathcal{V}_h^\eps \times \mathcal{Q}_h^\eps$ denote the $\Phi^\eps$--weighted Stokes projection of $(\bu^\dt,\theta^\dt)$. Similarly, $P_h^\dt \subset \mathcal{X}_h^\eps$ is the $ \Psi^\eps $--weighted Ritz projection of $p^\dt$. For these quantities we have, for all $n = 0, \ldots, N-1$,
\[
  \cA_\eps( (\bU_h^{n+1} - \bu^{n+1},P_h^{n+1} - p^{n+1}), (\bv_h,\psi_h) ) 
  + \cC_\eps(\bv_h,\Theta_h^{n+1} - \theta^{n+1}) -\cC_\eps(\bU_h^{n+1} - \bu^{n+1},\varphi_h)  = 0,
\]
where $(\bv_h,\psi_h) \in \mathcal{V}_h^\eps \times \mathcal{X}_h^\eps$ but are, otherwise, arbitrary. We now, as it is by now standard, further decompose the error as
\begin{align*}
  \be_{\bu}^\dt &= \bu^\dt - \bU_h^\dt + \bE_{\bu,h}^\dt, &\bE_{\bu,h}^\dt &= \bU_h^\dt - \bu_h^\dt, \\
  e_p^\dt  &= p^\dt - P_h^\dt + E_{p,h}^\dt, &E_{p,h}^\dt &= P_h^\dt - p_h^\dt, \\
  e_\theta^\dt &= \theta^\dt - \Theta_h^\dt + E_{\theta,h}^\dt, &E_{\theta,h}^\dt &= \Theta_h^\dt - \theta_h^\dt.
\end{align*}
This decomposition then yields
\begin{multline*}
  \rho \int_{\Omega_F^\eps} d_t \bE_{\bu,h}^{n+1} \cdot\bv_{h} \Phi^\eps +c_0\int_{\Omega_D^\eps} d_t E_{p,h}^{n+1} \psi_{h}  \Psi^\eps 
  + \cA_\eps( (\bE_{\bu,h}^{n+1},E_{p,h}^{n+1}), (\bv_h,\psi_h) ) \\
  + \cC_\eps(\bv_h,E_{\theta,h}^{n+1}) -\cC_\eps(\bE_{\bu,h}^{n+1},\varphi_h)  =
  \langle \mathcal{R}_\eps^{n+1}, (\bv_h,\psi_h) \rangle -  \\
  \rho \int_{\Omega_F^\eps} d_t \left( \bu - \bU_h\right)^{n+1} \cdot\bv_{h} \Phi^\eps -c_0\int_{\Omega_D^\eps} d_t \left( p - P_h \right)^{n+1} \psi_{h}  \Psi^\eps .
\end{multline*}

It is now time to recall that, owing to \cite{duran2020stability}, the Stokes projection $(\bU_h^\dt,\Theta_h^\dt)$ is stable, and thus possesses optimal approximation properties on weighted spaces. Similarly, \cite[Section 4]{MR4265062}, has shown that the Ritz projection $P_h^\dt$ is stable and possesses optimal approximation properties on weighted spaces.

The rest of the proof proceeds then in a standard way. We invoke the discrete stability of Theorem~\ref{thm:DiscrStability}, the interpolation estimates of \eqref{eq:interpolApprox}, the consistency error of Lemma~\ref{lem:ConsistencyDerivatives}, and the triangle inequality.
\end{proof}

\section{Modeling error}\label{Sec_ModellinError}
In this section, we estimate the difference between the continuous solutions of the diffuse domain  and sharp interface formulations in terms of the parameter $\eps$ describing the diffuse interface width. 

We will prove two types of convergence results. First, we will assume that the weights are strictly positive and Lipschitz. Note that this assumption is not satisfied by the weights used in Section~\ref{sub:PF}. However, in numerical computation we need to introduce a regularization $\delta>0$ which makes the weights strictly positive on the whole domain and Lipschitz. Such regularization does not affect the results from the previous sections. Indeed, with strictly positive Lipschitz weights the proofs of the results in Section~\ref{sub:PF} are classical and do not use weighted spaces. However, to see that all estimates are uniform in terms of the regularization parameter $\delta>0$, one must carry out an analysis without the regularization parameter as it was done in Section~\ref{sub:PF}. Numerical experiments confirm our analysis and show that our error estimates do not depend on $\delta$.
Another reason why we start the modeling error analysis with regularized weights is to present the main ideas in a less technical way. Here we will rely on results from \cite[Section 5]{burger2017analysis} to estimate difference between sharp and diffuse integrals.

The second set of results concerns weights without regularization, as introduced in Section~\ref{sub:PF}. We note that the results from \cite[Section 5]{burger2017analysis} are true also for this case, but with appropriately adjusted convergence rate. Therefore, we can prove this stronger result in an analogous way with slight technical complications.

\subsection{Positive Lipschitz weights}
Let us now construct the regularized phase field functions. Let $\cS_r$ be a function that satisfies conditions (S1)--(S3) from \cite[Section 3]{burger2017analysis}. An example of such function is given by
\[
  \cS_r(t) = \begin{dcases}
               t, & |t| \leq 1, \\
               \frac{t}{|t|}, & |t| > 1.
             \end{dcases}
\]
The phase field functions $\Phi^\eps$ and $\Psi^\eps$ are given as in \eqref{eq:defofPhiPsi}, but with $\cS$ replaced by $\cS_r$. Let $\delta>0$ be a small regularization parameter. We define a regularized phase field function in the following way:
\begin{align}
  \Phi^{\eps,\delta} = (1-2 \delta) \Phi^{\eps} + \delta =\Phi^{\eps}+\delta(1-2\Phi^{\eps}),
  \qquad
  \Psi^{\eps,\delta} = 1- \Phi^{\eps,\delta}.
\label{regularizacija}
\end{align}
Notice that since both $\Phi^{\eps,\delta}$ and $\Psi^{\eps,\delta}$ are strictly positive they satisfy all conditions from Sections \ref{sub:DiffuseDomainWellPosed} and \ref{sec:Discretization} with $\Omega_F^{\eps}=\Omega_D^{\eps}=\Omega$.
To make the exposition more clear, we slightly change the notation and introduce subscripts ``\emph{s}'' and ``\emph{d}'' to denote the solutions to the sharp interface and the diffuse interface formulations, respectively. 

The weak solution, $(\bu_s,p_s),$ to the sharp interface problem satisfies the following weak formulation (see Definition \ref{WS}):
\begin{equation}
\label{weakSI2}
  \begin{aligned}
	\rho \int_{\Omega_F}\partial_t \bu_s \cdot\bv
	+c_0\int_{\Omega_D}\partial_t p_s \psi  
	+\int_{\Omega_F}\bfsigma(\nabla\bu_s,\pi_s):\nabla \bv
	+\int_{\Omega_D} \boldsymbol \kappa\nabla p_s \cdot \nabla \psi 
	\\
	+\int_{\Gamma} p_s \bv \cdot \bn
	-\int_{\Gamma} \psi \bu_s \cdot \bn
	+\alpha_{BJ} \sum_{i=0}^{d-1}\int_{\Gamma}(\bu_s \cdot {\bftau}_i)(\bv\cdot {\bftau}_i)
	= \rho \int_{\Omega_F}\bF\cdot\bv+\int_{\Omega_D}g\psi,
	\quad (\bv,\psi)\in \mathcal{V}_{div}\times\mathcal{X}.
  \end{aligned}
\end{equation}
Here the subscript $div$ in $\mathcal{V}_{div}$ denotes the space of divergence free functions. The continuous diffuse interface solution, with the  diffuse layer of width $\mathcal{O}(\epsilon)$ and regularization $\delta,$ is denoted by $(\bu^{\epsilon,\delta}_d,p^{\epsilon,\delta}_d)$, and satisfies the following weak formulation (Definition \ref{SDFP_def}):
\begin{equation}
  \begin{aligned}
	\rho\int_{\Omega}\partial_t \bu^{\epsilon,\delta}_d \cdot\bv\Phi^{\epsilon,\delta}
	+c_0\int_{\Omega} \partial_t p^{\epsilon,\delta}_d \psi \Psi^{\epsilon,\delta}  
	+\int_{\Omega} \bfsigma(\nabla\bu^{\epsilon,\delta}_d,\pi^{\epsilon,\delta}_d):\nabla \bv \Phi^{\epsilon,\delta}
	+\int_{\Omega} \boldsymbol\kappa\nabla p^{\epsilon,\delta}_d \cdot \nabla \psi \Psi^{\epsilon,\delta}
	\\ 
	-\int_{\Omega} p^{\epsilon,\delta}_d \bv \cdot \nabla\Phi^{\epsilon,\delta}
	+\int_{\Omega} \psi \bu^{\epsilon,\delta}_d \cdot \nabla\Phi^{\epsilon,\delta}
	+\alpha_{BJ} \sum_{i=0}^{d-1}\int_{\Omega}(\bu^{\epsilon,\delta}_d\cdot \tilde{\bftau}_i)(\bv\cdot \tilde{\bftau}_i) |\nabla\Phi^{\epsilon,\delta}| 
	= \rho \int_{\Omega}\bF\cdot\bv\Phi^{\epsilon,\delta}+\int_{\Omega}g\psi \Psi^{\eps,\delta},    
  \end{aligned}
\label{weakDI}
\end{equation}
for all test functions $(\bv,\psi)\in \mathcal{V}^{\eps,\delta}_{div}\times\mathcal{X}^{\eps,\delta}$, where the superscripts on the definition of the corresponding function spaces carry the obvious meaning.
Our goal is to estimate $\|(\bu_s,p_s)-(\bu^{\epsilon,\delta}_d,p^{\epsilon,\delta}_d)\|$ in a suitable norm, in terms of powers of $\epsilon$ and $\delta$. This will be done by subtracting the weak form of the diffuse interface formulation from the weak form of the sharp interface formulation, taking suitable test functions, and using the convergence properties of the diffuse integrals.

\begin{theorem}[Modeling error I]\label{ModellinError}
Let $(\bu_s, p_s, \pi_s)$ be the solution of the sharp interface Stokes-Darcy problem in the sense of Definition \ref{WS}, and $(\bu^{\epsilon,\delta}_d,  p^{\epsilon,\delta}_d, \pi^{\eps,\delta}_d)$ be the solution of the diffuse interface problem in sense of Definition \ref{SDFP_def}. Furthermore,  assume $\epsilon>0$, $\delta\geq 0$, $\bF\in H^1(\Omega)^d$, $g\in H^1(\Omega)$ and
\begin{align*}
  (\bu_s,\pi_s) &\in \left(H^1(0,T;W^{1,\infty}(\Omega)^d)\cap L^2(0,T;W^{3,\infty}(\Omega)^d)\right)  \times L^2(0,T;W^{2,\infty}(\Omega)), \\
  p_s &\in H^1(0,T;W^{1,\infty}(\Omega))\cap L^2(0,T;W^{3,\infty}(\Omega)).
\end{align*}
Then, there exists a constant $C$ independent of $\eps$ and $\delta$ such that the following estimate holds:
\begin{equation}   
  \begin{aligned}
	\rho\|\bu_s-\bu_d^{\epsilon,\delta}\|_{L^{\infty}(0,T;L^2(\Omega,\Phi^{\epsilon,\delta})^d)}^2
	+c_0\|p_s-p^{\epsilon,\delta}_d\|_{L^{\infty}(0,T;L^2(\Omega,\Psi^{\epsilon,\delta}))}^2
	+\nu \|\bD(\bu_s-\bu_d^{\epsilon,\delta})\|^2_{L^2(0,T;L^2(\Omega,\Phi^{\epsilon,\delta})^{d\times d})}
	\\
	+\frac{\boldsymbol \kappa}{2}\|\nabla(p_s-p^{\epsilon,\delta}_d)\|^2_{L^2(0,T;L^2(\Omega,\Psi^{\epsilon,\delta} )^d)}
	+\alpha_{BJ}
	\sum_{i=0}^{d-1}\int_0^T\int_{\Omega}\left |(\bu_s-\bu_d^{\epsilon,\delta})\cdot \tilde{\bftau}_i \right |^2 |\nabla \Phi^{\epsilon,\delta}|
	\\
	\leq C (\epsilon^3+\delta)
	\left (
	\|\partial_t\bu_s\|^2_{L^2(0,T;W^{1,\infty}(\Omega)^d)}
	+\|\partial_t p_s\|^2_{L^2(0,T;W^{1,\infty}(\Omega))}
	\right .
	\\    
	\left .
	+\|\bu_s\|^2_{L^2(0,T;W^{3,\infty}(\Omega))}+\|\pi_s\|^2_{L^2(0,T;W^{2,\infty}(\Omega))}+\|p_s\|^2_{L^2(0,T;W^{3,\infty}(\Omega))}
	\right ).    
  \end{aligned}
\label{ModellinEstimate}
\end{equation}
\end{theorem}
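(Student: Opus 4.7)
The strategy is to rewrite the sharp interface weak formulation \eqref{weakSI2} on the whole domain $\Omega$ using characteristic functions as weights, then treat the regularized diffuse formulation \eqref{weakDI} as a perturbation with quantifiable residuals, subtract the two equations, and close a Grönwall estimate using coercivity of $\cA_\eps$.

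First, I would construct extensions $\tilde{\bu}_s$, $\tilde{p}_s$, $\tilde{\pi}_s$ of the sharp interface solution from $\Omega_F$, $\Omega_D$ to all of $\Omega$, preserving the regularity hypothesized in the statement. Because $\Gamma$ is smooth and the solution lies in spaces like $W^{3,\infty}$, a Stein-type extension suffices; this makes $\tilde{\bu}_s$ defined and smooth on $\Omega$, though no longer divergence free outside $\Omega_F$. Then I rewrite \eqref{weakSI2} as an identity on $\Omega$ by inserting $\chi_{\Omega_F}$ and $\chi_{\Omega_D}$ and, by integration by parts applied to the normal flux and pressure balance conditions at $\Gamma$, converting the surface integrals on $\Gamma$ into volume residuals plus terms of the form $-\int_\Omega p_s \bv\cdot\nabla\chi_{\Omega_F}$. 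Adding and subtracting $\Phi^{\epsilon,\delta}$ and $\Psi^{\epsilon,\delta}$ for $\chi_{\Omega_F}$ and $\chi_{\Omega_D}$, and $\nabla\Phi^{\epsilon,\delta}$ for the (distributional) $\nabla\chi_{\Omega_F}$, produces a modified equation of the form of \eqref{weakDI} plus a modeling residual $\langle \cR_s, (\bv,\psi)\rangle$.

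Second, the key analytic input is quantifying $\cR_s$. For each bulk term I would bound
\[
\left| \int_\Omega \Xi\,(\Phi^{\eps,\delta} - \chi_{\Omega_F}) \right|\lesssim \|\Xi\|_{L^\infty}\,(|\ell^\eps|+\delta |\Omega|),
\]
and for interface terms I would apply the convergence estimates of \cite[Section 5]{burger2017analysis}, which under the regularity of the integrand give rate $\eps^{3/2}$ in $L^2$-type norms. Squaring yields the $\eps^3+\delta$ factor. The Lipschitz smoothness and the positivity of $\Phi^{\eps,\delta}$, $\Psi^{\eps,\delta}$ ensured by \eqref{regularizacija} make this a straightforward application of those estimates.

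Third, I would subtract the modified sharp formulation from \eqref{weakDI} and test with $(\bv,\psi) = (\tilde{\bu}_s-\bu_d^{\eps,\delta},\, \tilde p_s - p_d^{\eps,\delta})$. The pressure/divergence pairing is handled as in the proof of Theorem~\ref{WellPosednes}: the test velocity need not be solenoidal, but by the weighted inf-sup condition \eqref{eq:inf-sup-no-Phi} the pressure pairings $\cB_\eps$ drop out once one accounts for the non-solenoidality of $\tilde{\bu}_s$ through an additional residual (also controlled by $\eps^3+\delta$, since $\nabla\cdot\tilde{\bu}_s=0$ on $\Omega_F$ and the mismatch lives in $\Omega\setminus\Omega_F$, whose weighted measure is $\lesssim \eps+\delta$). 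Coercivity of $\cA_\eps$ from Lemma~\ref{bilina}, Cauchy–Schwarz on the residuals, Young's inequality to absorb $\bD$ and $\nabla p$ terms, and the weighted Korn/Poincaré inequalities \eqref{eq:weightedKorn}–\eqref{eq:WeightedPoincare} yield a differential inequality for the squared weighted $L^2$ norms of the error, to which Grönwall's lemma applied on $[0,T]$ delivers \eqref{ModellinEstimate}.

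The main obstacle is the incompressibility mismatch: $\tilde{\bu}_s$ is not divergence free on $\Omega\setminus\Omega_F$, so one must either construct a divergence-corrected extension using a weighted Bogovski\u\i{} operator on the layer $\ell^\eps\cup\Omega_D^\eps$, or carry the pressure error term through explicitly and bound it via the weighted inf-sup condition. Either route requires confirming that the correction has size $O(\eps^{3/2}+\delta^{1/2})$ in $\mathcal{V}^{\eps,\delta}$, which uses the bound $|\ell^\eps|\lesssim \eps\,\mathcal{H}_{d-1}(\Gamma)$ from \eqref{eq:MeasureOfLayer} together with the smoothness of $\bu_s$ across $\Gamma$ in the extended sense.
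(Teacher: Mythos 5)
Your overall architecture matches the paper's: extend the sharp solution to $\Omega$, subtract the two weak formulations, test with the error, estimate the residuals via the diffuse-integral results of \cite[Section 5]{burger2017analysis}, and integrate in time. But there are two concrete gaps in the quantitative core. First, your bulk-term estimate
\[
\left| \int_\Omega \Xi\,(\Phi^{\eps,\delta} - \chi_{\Omega_F}) \right|\lesssim \|\Xi\|_{L^\infty}\,\bigl(|\ell^\eps|+\delta |\Omega|\bigr)
\]
cannot deliver the claimed rate. The integrand necessarily contains the test function, i.e.\ the error $\bu_s-\bu_d^{\eps,\delta}$ or $p_s-p_d^{\eps,\delta}$, which is not controlled in $L^\infty$; and even granting such a bound, $|\ell^\eps|\lesssim\eps$ gives only $\eps$ per term, hence $\eps^2$ after squaring, not $\eps^3$. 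The paper instead applies \cite[Theorem 5.2]{burger2017analysis} to \emph{every} bulk residual ($I_1$, the volume parts of $I_2$ and $I_3$, and $I_6$), obtaining bounds of the form $\eps^{3/2}\,\|\text{data}\|_{W^{1,\infty}}\,\|\nabla(\text{error})\|_{L^2(\Omega,\text{weight})}$; the error factor is then absorbed into the dissipation by Young's inequality, and the $W^{1,\infty}$ (rather than $L^\infty$) hypotheses in the statement exist precisely to feed this refined estimate.

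Second, the coupling conditions never enter your argument, yet they are the linchpin. After integrating $I_2$ and $I_3$ by parts, the sharp interface integrals over $\Gamma$ cancel exactly, and what survives are diffuse interface residuals with integrands $(\bu_s+\boldsymbol\kappa\nabla p_s)\cdot\nabla\Phi^{\eps,\delta}$, $\bigl(-p_s-\bfsigma\bn\cdot\bn\bigr)(\bv\cdot\nabla\Phi^{\eps,\delta})$ and $\bigl(\alpha_{BJ}\,\bu_s\cdot\tilde\bftau_i-\bfsigma\tilde\bftau_i\cdot\tilde\bftau_i\bigr)(\bv\cdot\tilde\bftau_i)|\nabla\Phi^{\eps,\delta}|$. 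These are $O(1)$ in general; they are $O(\eps^{3/2})$ only because conservation of mass, balance of pressure and the Beavers--Joseph--Saffman condition force each integrand to vanish on $\Gamma$, which is what lets \cite[Theorem 5.6]{burger2017analysis} apply. Your phrase about ``integration by parts applied to the normal flux and pressure balance conditions'' gestures at this, but without exhibiting the vanishing of these specific combinations the interface residuals do not close. A lesser point: the paper takes a divergence-free extension of $\bu_s$, so the pressure pairing drops out identically when testing with the error and no inf-sup or Bogovski\u\i{} correction is needed; your $O(\eps^{3/2}+\delta^{1/2})$ bound for the non-solenoidal mismatch is asserted rather than proved, and the complication is best avoided by choosing the extension divergence-free from the start.
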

\begin{proof}
To simplify notation we set $\chi = \chi_{\Omega_F}$.
We begin by subtracting \eqref{weakDI} from \eqref{weakSI2}. Using a divergence-free extension operator to extend the sharp interface solution to the whole domain, we obtain the following equation:
\begin{align*}
  \rho  \int_{\Omega}\partial_t (\bu_s-\bu^{\epsilon,\delta}_d)\cdot\bv\Phi^{\epsilon,\delta}
  +  \rho \int_{\Omega}\partial_t\bu_s\cdot \bv (\chi-\Phi^{\epsilon,\delta})
  +c_0\int_{\Omega}\partial_t (p_s-p^{\epsilon,\delta}_d)\psi\Psi^{\epsilon,\delta}
  +c_0\int_{\Omega}\partial_t p_s\psi (\Phi^{\epsilon,\delta}-\chi)
  \\
  +\int_{\Omega}\bfsigma\left(\nabla(\bu_s-\bu_d^{\epsilon,\delta}),\pi_s-\pi_d^{\epsilon,\delta}\right):\nabla \bv\Phi^{\epsilon,\delta}
  +\int_{\Omega}\bfsigma(\nabla\bu_s,\pi_s):\nabla \bv (\chi-\Phi^{\epsilon,\delta})
  +\boldsymbol\kappa\int_{\Omega}\nabla(p_s-p^{\epsilon,\delta}_d)\cdot \nabla\psi \Psi^{\epsilon,\delta}
  \\
  +\boldsymbol\kappa\int_{\Omega}\nabla p_s\cdot\nabla\psi (\Phi^{\epsilon,\delta}-\chi)
  +\int_{\Gamma} p_s \bv \cdot \bn
  -\int_{\Gamma} \psi \bu_s \cdot \bn
  +\alpha_{BJ} \sum_{i=0}^{d-1}\int_{\Gamma}(\bu_s \cdot {\bftau}_i)(\bv\cdot {\bftau}_i)
  \\
  +\int_{\Omega} p^{\epsilon,\delta}_d \bv \cdot \nabla\Phi^{\epsilon,\delta}
  -\int_{\Omega} \psi \bu^{\epsilon,\delta}_d \cdot \nabla\Phi^{\epsilon,\delta}
  - \alpha_{BJ} \sum_{i=0}^{d-1}\int_{\Omega}(\bu^{\epsilon,\delta}_d\cdot \tilde{\bftau}_i)(\bv\cdot \tilde{\bftau}_i) |\nabla\Phi^{\epsilon,\delta}| 
  =\int_{\Omega}\left (\rho\bF\cdot\bv-g\psi\right )(\chi-\Phi^{\eps,\delta}).
\end{align*}
Taking $(\bv,\psi)=(\bu_s-\bu_d^{\epsilon,\delta},p_s-p_d^{\epsilon,\delta})$, we get:
\begin{align}
  \frac{1}{2}\frac{d}{dt}\left (     
  \rho \|\bu_s-\bu^{\epsilon,\delta}_d\|^2_{L^2(\Omega, \Phi^{\eps,\delta})^d}
  + c_0 \|p_s-p^{\epsilon,\delta}_d \|^2_{L^2(\Omega, \Psi^{\eps,\delta})}
  \right)\notag
  \\
  +2\mu \|\bD(\bu_s-\bu_d^{\epsilon,\delta})\|^2_{L^2(\Omega, \Phi^{\eps,\delta})^{d \times d}}
  +\boldsymbol\kappa \|\nabla(p_s-p^{\epsilon,\delta}_d)\|^2_{L^2(\Omega, \Psi^{\eps,\delta})^d}
  \notag
  \\
  =
  \underbrace{-\rho\int_{\Omega}\partial_t\bu_s\cdot \bv (\chi-\Phi^{\epsilon,\delta})
	  -c_0\int_{\Omega}\partial_t p_s \psi (\Phi^{\epsilon,\delta}-\chi)}_{I_1}
  \underbrace{-\int_{\Omega}\bfsigma(\nabla\bu_s,\pi_s):\nabla \bv(\chi-\Phi^{\epsilon,\delta})}_{I_2}
  \notag
  \\
  \underbrace{-\boldsymbol\kappa\int_{\Omega}\nabla p_s\cdot\nabla\psi (\Phi^{\epsilon,\delta}-\chi)}_{I_3}
  \underbrace{-\int_{\Gamma} p_s \bv \cdot \bn
	  +\int_{\Gamma} \psi \bu_s \cdot \bn
	  -\int_{\Omega} p^{\epsilon,\delta}_d \bv\cdot \nabla\Phi^{\epsilon,\delta}
	  +\int_{\Omega} \psi \bu^{\epsilon,\delta}_d \cdot \nabla\Phi^{\epsilon,\delta}}_{I_4}
  \notag
  \\
  \underbrace{ -\alpha_{BJ} \sum_{i=0}^{d-1} \left (\int_{\Gamma}(\bu_s \cdot\bftau_i)(\bv\cdot\bftau_i)-
	  \int_{\Omega}(\bu^{\epsilon,\delta}_d\cdot \tilde{\bftau}_i)(\bv\cdot \tilde{\bftau}_i) |\nabla\Phi^{\epsilon,\delta}| \right )}_{I_5}
  +\underbrace{\int_{\Omega}\left (\rho\bF\cdot\bv-g\psi \right )(\chi-\Phi^{\eps,\delta})}_{I_6}\label{moderror}	
\end{align}
Here we used that $\bu_s$ and $\bu_d^{\epsilon,\delta}$ are divergence free, and therefore:
\[
  \int_{\Omega}\bfsigma(\nabla(\bu_s-\bu_d^{\epsilon,\delta}),\pi_s-\pi_d^{\epsilon,\delta}):\nabla(\bu_s-\bu_d^{\epsilon,\delta})\Phi^{\epsilon,\delta}
  =2\mu\int_{\Omega}|\bD(\bu_s-\bu_d^{\epsilon,\delta})|^2\Phi^{\epsilon,\delta}.
\]

Our goal is to estimate the right-hand side of~\eqref{moderror} in terms of $\epsilon$ and the norms of the sharp interface solution. This will be done by  using the convergence results for the diffuse integrals from \cite[Section 5]{burger2017analysis}, and the following estimates which are consequences of the Poincare's and Korn's inequalities
\[
  \|\bv\|^2_{H^1(\Omega,\Phi^{\epsilon,\delta})^d}\ltt \int_{\Omega}|{\bD}(\bu_s-\bu_d^{\epsilon,\delta})|^2\Phi^{\epsilon,\delta},
  \qquad
  \|\psi\|^2_{H^1(\Omega,\Psi^{\epsilon,\delta})}\ltt \int_{\Omega}|\nabla(p_s-p^{\epsilon,\delta}_d)|^2\Psi^{\epsilon,\delta}.
\]
Notice that because of \eqref{regularizacija} we have $\|\cdot \|_{L^2(\Omega,\Phi^{\eps})}\ltt \|\cdot \|_{L^2(\Omega,\Phi^{\eps,\delta})}$. The first integral is estimated using \eqref{regularizacija} and \cite[Theorem 5.2]{burger2017analysis} for $p=2$:
\begin{align*}
  |I_1| &\leq \left |\rho\int_{\Omega}\partial_t\bu_s\cdot \bv (\chi-\Phi^{\epsilon})
  +c_0\int_{\Omega}\partial_t p_s \psi (\Phi^{\epsilon}-\chi)\right |  
  +\delta\left |\rho\int_{\Omega}\partial_t\bu_s\cdot \bv (-1+2\Phi^{\epsilon})
  +c_0\int_{\Omega}\partial_t p_s \psi (1-2\Phi^{\epsilon})
  \right |
  \\
  &\ltt \epsilon^{3/2}\left (                      
  \rho  \|\partial_t\bu_s\|_{W^{1,\infty}(\Omega)^d}\|\bD(\bu_s-\bu_d^{\epsilon,\delta})\|_{L^2(\Omega,\Phi^{\epsilon})^{d \times d}}
  +  \|\partial_t p_s \|_{W^{1,\infty}(\Omega)}\|\nabla(p_s-p^{\epsilon,\delta}_d)\|_{L^2(\Omega,\Psi^{\epsilon})^d}
  \right )+I_1^{\delta}.
\end{align*}
We estimate $I_1^{\delta}$ as follows:
\begin{align*}
  |I_1^{\delta}| &\ltt \left(\|\partial_t \bu_s \|_{L^{\infty}(\Omega)^d}+\|\partial_t p_s\|_{L^{\infty}(\Omega)}\right )\int_{\Omega}\delta(|\bv|+|\psi|) \\
  &\ltt  \left (\|\partial_t \bu_s\|_{L^{\infty}(\Omega)^d}+\|\partial_t p_s\|_{L^{\infty}(\Omega)}\right )\sqrt{\delta}
  \left ( \|\bv\|_{L^2(\Omega,\Phi^{\eps,\delta})^d}
  + \|\psi\|_{L^2(\Omega,\Psi^{\eps,\delta})} \right  )  
\end{align*}
We note that all other  terms in volume integrals that have a factor of $\delta$ can be estimated analogously. Finally, by using $\|\cdot\|_{L^2(\Omega,\Phi^{\eps})}\ltt \|\cdot\|_{L^2(\Omega,\Phi^{\eps,\delta})}$ and combining previous estimates we get:
\begin{align*}
  |I_1| &\ltt (\eps^{3/2}+\sqrt{\delta})
  \left (\|\partial_t\bu_s\|_{W^{1,\infty}(\Omega)^d}+ \|\partial_t p_s \|_{W^{1,\infty}(\Omega)}\right )
  \left (\|\bD(\bu_s-\bu_d^{\epsilon,\delta})\|_{L^2(\Omega,\Phi^{\epsilon,\delta})^{d \times d}} \right. \\
  &+ \left. \|\nabla(p_s-p^{\epsilon,\delta}_d)\|_{L^2(\Omega,\Psi^{\epsilon,\delta})^d}\right ).
\end{align*}
Similarly,
\begin{align*}
  |I_6|\ltt (\epsilon^{3/2}+\sqrt{\delta})\left (\|\bF\|_{H^1(\Omega)^d}+\|g\|_{H^1(\Omega)}\right )
  \left (\|\bD(\bu_s-\bu_d^{\epsilon,\delta})\|_{L^2(\Omega,\Phi^{\epsilon,\delta})^{d \times d}}
  +\|\nabla(p_s-p^{\epsilon,\delta}_d)\|_{L^2(\Omega,\Psi^{\epsilon,\delta})^d}\right ).
\end{align*}

Notice that by \eqref{regularizacija} $\nabla\Phi^{\eps}$ and $\nabla\Phi^{\eps,\delta}$ are colinear and therefore we can define tangents $\{\tilde{\bftau}_i\}_{i=1}^{d-1}$ using the regularized phase field functions in the same way as before. To estimate $I_2$, we integrate by parts:
\begin{align*}
  I_2&=\int_{\Omega}\nabla\cdot\bfsigma(\nabla\bu_s,\pi_s)\cdot\bv(\chi-\Phi^{\epsilon,\delta})
  -\int_{\Omega}\bfsigma(\nabla\bu_s,\pi_s)\nabla\Phi^{\epsilon,\delta}\cdot\bv
  -\int_{\Gamma}\bfsigma(\nabla\bu_s,\pi_s)\bn\cdot\bv
  \\
  &=\int_{\Omega}\nabla\cdot\bfsigma(\nabla\bu_s,\pi_s)\cdot\bv(\chi-\Phi^{\epsilon,\delta})
  -\int_{\Omega} \left(\bfsigma(\nabla\bu_s,\pi_s)\frac{\nabla\Phi^{\epsilon,\delta}}{|\nabla\Phi^{\epsilon,\delta}|}\cdot\frac{\nabla\Phi^{\epsilon,\delta}}{|\nabla\Phi^{\epsilon,\delta}|} \right)(\bv\cdot\nabla\Phi^{\epsilon,\delta}) 
  \\
  &-\sum_{i=1}^{d-1} \int_{\Omega} \left(\bfsigma(\nabla\bu_s,\pi_s)\tilde{\bftau}_i \cdot\tilde{\bftau}_i \right)(\bv\cdot \tilde{\bftau}_i ) |\nabla\Phi^{\epsilon,\delta}|
  +\int_{\Gamma}p_s\bv\cdot\bn
  +\alpha_{BJ} \sum_{i=1}^{d-1}\int_{\Gamma}(\bu_s\cdot\bftau_i)(\bv\cdot\bftau_i),
\end{align*}
In a similar way, we have:
\begin{align*}
  I_3= \boldsymbol\kappa\int_{\Omega}\Delta p_s\psi(\chi-\Phi^{\epsilon,\delta})
  +\boldsymbol\kappa\int_{\Omega}\psi\nabla p_s\cdot \nabla\Phi^{\epsilon,\delta}
  +\int_{\Gamma} \psi\underbrace{\boldsymbol\kappa \nabla p_s \cdot \bn}_{-\bu_s\cdot\bn},
\end{align*}
\begin{align*}
  I_4&=\underbrace{\int_{\Omega}(p_s-p_d^{\varepsilon})\bv\cdot\nabla\Phi^{\epsilon,\delta}
  -\int_{\Omega}\psi(\bu_s-\bu_d^{\epsilon,\delta})\cdot\nabla\Phi^{\epsilon,\delta}}_{0}
  \\
  &-\int_{\Omega}p_s\bv\cdot\nabla\Phi^{\epsilon,\delta}
  -\int_{\Gamma} p_s \bv\cdot \bn
  +\int_{\Omega}\psi\bu_s\cdot\nabla\Phi^{\epsilon,\delta}
  +\int_{\Gamma} \psi \bu_s \cdot \bn,
\end{align*}
and
\begin{align*}
  I_5=-\alpha_{BJ}  \sum_{i=1}^{d-1} \int_{\Omega}\left |(\bu_s-\bu_d^{\epsilon,\delta})\cdot \tilde{\bftau}_i \right |^2 |\nabla \Phi^{\epsilon,\delta}|
  +\alpha_{BJ}\sum_{i=1}^{d-1} \int_{\Omega}(\bu_s\cdot \tilde{\bftau}_i )(\bv\cdot \tilde{\bftau}_i ) |\nabla\Phi^{\epsilon,\delta}|
  -\alpha_{BJ}  \sum_{i=1}^{d-1} \int_{\Gamma}(\bu_s \cdot\bftau_i)(\bv\cdot\bftau_i).
\end{align*}
Adding the estimates for $I_2, I_3, I_4$ and $I_5$ together, we obtain:
\begin{align*}
\sum_{k=2}^5 I_k &=
-\alpha_{BJ}  \sum_{i=1}^{d-1} \int_{\Omega}\left |(\bu_s-\bu_d^{\epsilon,\delta})\cdot \tilde{\bftau}_i \right |^2 |\nabla \Phi^{\epsilon,\delta}|
+
\int_{\Omega}\left (\nabla\cdot\bfsigma(\nabla\bu_s,\pi_s)\cdot\bv+ \boldsymbol\kappa\Delta p_s\psi\right )(\chi-\Phi^{\epsilon,\delta})
\\
&+\int_{\Omega}\psi\left (\bu_s+\boldsymbol\kappa\nabla p_s   \right )\cdot\nabla\Phi^{\epsilon,\delta}
+\int_{\Omega}\left (-p_s-\bfsigma(\nabla\bu_s,\pi_s)\frac{\nabla\Phi^{\epsilon,\delta}}{|\nabla\Phi^{\epsilon,\delta}|}\cdot\frac{\nabla\Phi^{\epsilon,\delta}}{|\nabla\Phi^{\epsilon,\delta}|}   \right )(\bv\cdot\nabla\Phi^{\epsilon,\delta})
\\
&+ \sum_{i=1}^{d-1} \int_{\Omega}\left (\alpha_{BJ} \bu_s \cdot \tilde{\bftau}_i-\bfsigma(\nabla\bu_s,\pi_s)\tilde{\bftau}_i \cdot\tilde{\bftau}_i\right )(\bv\cdot \tilde{\bftau}_i ) |\nabla\Phi^{\epsilon,\delta}|.
\end{align*}

The first term has a negative sign and will be combined with the terms on  the left-hand side. The second term can be again estimated by \cite[Theorem 5.2]{burger2017analysis}:
\begin{multline*}
  \left |\int_{\Omega}\left (\nabla\cdot\bfsigma(\nabla\bu_s,\pi_s)\cdot\bv+\boldsymbol\kappa\Delta p_s \psi\right )(\chi-\Phi^{\epsilon,\delta})|
  \right |
  \ltt \\ (\epsilon^{3/2}+\sqrt{\delta})
  \left (                      
  \|\nabla\cdot\bfsigma(\nabla\bu_s,\pi_s)\|_{W^{1,\infty}(\Omega)^d}\|\bD(\bu_s-\bu_d^{\epsilon,\delta})\|_{L^2(\Omega,\Phi^{\epsilon,\delta})^{d \times d}} \right. \\
  \left. +\|\Delta p_s\|_{W^{1,\infty}(\Omega)}\|\nabla(p_s-p^{\epsilon,\delta}_d)\|_{L^2(\Omega,\Psi^{\epsilon,\delta})^d}
  \right ).
\end{multline*}

To estimate the last three terms first we notice that in the tubular neighborhood of $\Gamma$ we have $\bn=-\displaystyle\frac{\nabla\Phi^{\epsilon,\delta}}{|\nabla\Phi^{\epsilon,\delta}|}=-\displaystyle\frac{\nabla\Phi^{\epsilon}}{|\nabla\Phi^{\epsilon}|}$ and 
$\bftau_i={ -\tilde{\bftau}_i}, i=1,2,\ldots, d-1$. Therefore, using the coupling conditions, the following equalities hold on $\Gamma$:
\[
\left (\bu_s+\kappa\nabla p_s   \right )\cdot\nabla\Phi^{\epsilon,\delta}=0,
\
\left (-p_s-\bfsigma(\nabla\bu_s,\pi_s)\frac{\nabla\Phi^{\epsilon,\delta}}{|\nabla\Phi^{\epsilon,\delta}|}\cdot\frac{\nabla\Phi^{\epsilon,\delta}}{|\nabla\Phi^{\epsilon,\delta}|}   \right )=0, \
\left ( \alpha_{BJ}  \bu_s\cdot \tilde{\bftau}_i-\bfsigma(\nabla\bu_s,\pi_s)\tilde{\bftau}_i \cdot\tilde{\bftau}_i\right )=0.
\]
Now from \eqref{regularizacija} we have $\nabla\Phi^{\eps,\delta}=(1-2\delta)\nabla\Phi^{\eps}$.
Hence, we can use \cite[Theorem 5.6]{burger2017analysis} to bound the terms that are multiplied by $\nabla \Phi^\eps$,
and the trace inequality of Lemma \ref{nablaPhi}, to bound terms that have as a factor $\delta \nabla \Phi^\eps$. Combining these estimates, last three terms are bounded, up to a constant, by:
\[
(\epsilon^{3/2}+\delta)
\left (
\|\bu_s\|_{H^3(\Omega)^d}+\|\pi_s\|_{H^2(\Omega)}+\|p_s\|_{H^3(\Omega)}
\right )
\left (
\|\bD(\bu_s-\bu_d^{\epsilon,\delta})\|_{L^2(\Omega,\Phi^{\epsilon,\delta})^{d \times d}}
+\|\nabla(p_s-p^{\epsilon,\delta}_d)\|_{L^2(\Omega,\Psi^{\epsilon,\delta})^d}
\right ).
\]
Combining all the obtained estimates and using Young's inequality, we get the following error estimate:
\begin{multline*}
\frac{1}{2}\frac{d}{dt}\left ( 
\rho\|\bu_s-\bu_d^{\epsilon,\delta}\|_{L^2(\Omega,\Phi^{\epsilon,\delta})^d}^2
+c_0\|p_s-p^{\epsilon,\delta}_d)\|_{L^2(\Omega,\Psi^{\epsilon,\delta})}^2
\right )
+\mu \|\bD(\bu_s-\bu_d^{\epsilon,\delta})\|_{L^2(\Omega,\Phi^{\epsilon,\delta})^{d \times d}}^2
\\
+ \frac{\boldsymbol \kappa}{2}\|\nabla(p_s-p^{\epsilon,\delta}_d)\|_{L^2(\Omega,\Psi^{\epsilon,\delta})^d}^2
+\alpha_{BJ} \sum_{i=1}^{d-1} \int_{\Omega}\left |(\bu_s-\bu_d^{\epsilon,\delta})\cdot \tilde{\bftau}_i \right |^2 |\nabla \Phi^{\epsilon,\delta}|
\\
\ltt (\epsilon^3+\delta)
\left (
\|\partial_t\bu_s\|^2_{W^{1,\infty}(\Omega)^d}
+\|\partial_t p_s\|^2_{W^{1,\infty}(\Omega)}
+\|\bu_s\|^2_{W^{3,\infty}(\Omega)^d}+\|\pi_s\|^2_{W^{2,\infty}(\Omega)}+\|p_s\|^2_{W^{3,\infty}(\Omega)} \right. \\
\left.
+\|\bF\|_{H^1(\Omega)^d}+\|g\|_{H^1(\Omega)}
\right ).
\end{multline*}
The final result \eqref{ModellinEstimate} is obtained by integrating the estimate above  from   0 to $t$.
\end{proof}

\begin{remark}[Sharpness]
Notice that the order of convergence with respect to $\delta$ obtained in Theorem~\ref{ModellinError} is sharp. Namely, since we arbitrarily extended the sharp interface solution to the whole domain $\Omega$, we cannot expect that this extension will be close to the diffuse interface solution. Therefore the error estimates in the ``wrong'' part of domain are of order $\delta$. More precisely, we have
\[
  \|\bu_s-\bu_d^{\eps,\delta}\|_{L^2(\Omega,\Phi^{\eps,\delta})^d}^2
  \geq  \int_{\Omega\setminus\Omega^{\eps}_F}\delta (\bu_s-\bu_d^{\eps,\delta})^2
  \approx\delta.
\]
From a computational point of view this is fine since we typically take $\delta \ll \eps$. Moreover, note that Theorem \ref{ModellinError} also holds for $\delta=0$. However, $\Phi^{\eps,0}$ does not satisfy the assumptions used in Section~\ref{sub:PF}.
\end{remark}

\begin{remark}[Extensions]
At first sight it might seem counter-intuitive that the conclusion of Theorem \ref{ModellinError} holds for an arbitrary extension of the sharp interface solution. However, the constant in estimate \eqref{ModellinEstimate} depends on this extension. Since the same extension is chosen for every $\eps$ and $\delta$, the estimate presented in \eqref{ModellinError} is uniform in $\delta$ and $\epsilon$, and depends just on norms of the sharp interface solution.
\end{remark}


%

\subsection{Muckenhoupt weights}
 Here we estimate the difference between the continuous solutions of the diffuse domain  and sharp interface formulations in terms of the parameter $\eps$ describing the diffuse interface width, under the sole assumption on the phase field function that $\Phi^\eps \in A_2$. 


We begin with some preliminary considerations.

\begin{lemma}[Embeddings]\label{lem:EmbeddingsForModelError}
For every $\eps >0$ we have that
\begin{enumerate}[1.]
  \item The embedding $L^2(\Omega_F^\eps) \subset L^2(\Omega_F^\eps,\Phi^\eps)$ is continuous, independently of $\eps$.
  
  \item If $w \in L^2(\Omega_F^\eps,\Phi^\eps)$ then the restriction $w_{|\Omega_F} \in L^2(\Omega_F)$ with an embedding constant independent of $\eps$.
  
  \item Let $\Gamma_F^\eps = \partial\Omega_F^\eps \cap \Omega$, i.e., the diffuse boundary of the fluid domain. If $w \in H^1(\Omega_F^\eps,\Phi^\eps)$ then $w_{|\Gamma_F^\eps } \in L^2(\Gamma_F^\eps )$.
\end{enumerate}
Similar embeddings hold for $L^2(\Omega_D^\eps, \Psi^\eps )$.
\end{lemma}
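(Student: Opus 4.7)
The plan is to deduce each item from an elementary pointwise comparison between the weight $\Phi^\eps$ and either the constant $1$ or a power of the distance to the degenerate boundary $\Gamma_F^\eps$, after which the corresponding classical or weighted result recalled in Section~\ref{sec:Notation} can be invoked directly.

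For item 1, the construction in \eqref{eq:SfunctionBurgers}--\eqref{eq:defofPhiPsi} immediately gives $\Phi^\eps\in[0,1]$ pointwise, so the inclusion holds with embedding constant equal to one. For item 2, the key observation will be that on $\Omega_F$ one has $\sdist_\Gamma\ge 0$, hence $\cS(\sdist_\Gamma/\eps)\ge \cS(0)=0$ and therefore $\Phi^\eps\ge\tfrac12$ uniformly on $\Omega_F$; this produces the embedding with a constant (namely $\sqrt{2}$) independent of $\eps$, since the integral of $|w|^2$ over $\Omega_F$ is then controlled by twice the integral of $|w|^2\Phi^\eps$ over $\Omega_F^\eps$.

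The main obstacle is item 3, which is a trace statement onto the portion of the boundary where the weight degenerates to zero. The plan is to reuse the identification that already appears in the proof of the $A_2$ proposition in Section~\ref{sub:PF}: in the Darcy-side transitional layer $\ell_D^\eps\cap\Omega_F^\eps$ the weight $\Phi^\eps$ is pointwise equivalent, up to an $\eps$-dependent multiplicative factor, to $\dist_{\Gamma_F^\eps}^\alpha$, while on the remainder of $\Omega_F^\eps$ both $\Phi^\eps$ and $\dist_{\Gamma_F^\eps}^\alpha$ are bounded below by positive constants. Combining the two regimes yields a continuous inclusion $H^1(\Omega_F^\eps,\Phi^\eps)\hookrightarrow H^1(\Omega_F^\eps,\dist_{\Gamma_F^\eps}^\alpha)$, and the conclusion will then follow by applying the sharp trace embedding \eqref{eq:TraceIsL2} with $\Gamma$ there taken to be $\Gamma_F^\eps$.

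The corresponding statements involving $\Psi^\eps$ and $\Omega_D^\eps$ will follow by the same argument after swapping the roles of the fluid and porous regions and exploiting $\Psi^\eps=1-\Phi^\eps$. The one place where some care is needed is verifying the pointwise comparability between $\Phi^\eps$ and $\dist_{\Gamma_F^\eps}^\alpha$ near $\Gamma_F^\eps$; this is a direct computation from the explicit form of $\cS$ together with the fact that, for $\eps$ sufficiently small and $\Gamma$ smooth, the level set $\{\sdist_\Gamma=-\eps\}$ is itself smooth with $\dist_{\Gamma_F^\eps}(x)\approx \sdist_\Gamma(x)+\eps$.
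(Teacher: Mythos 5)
Your proposal is correct and follows essentially the same route as the paper: items 1 and 2 use the identical pointwise bounds $\Phi^\eps\le 1$ and $\Phi^\eps\ge\tfrac12$ on $\overline{\Omega_F}$, and item 3 reduces to the trace embedding \eqref{eq:TraceIsL2}, which is all the paper itself does (its proof of item 3 is a one-line citation). Your intermediate step for item 3 --- the comparison $\dist_{\Gamma_F^\eps}^\alpha\lesssim\Phi^\eps$ obtained from the explicit form of $\cS$ on the layer $\ell_D^\eps$ together with the lower bound $\Phi^\eps\ge\tfrac12$ elsewhere --- is a correct and welcome elaboration of what the paper leaves implicit.
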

\begin{proof}
We prove each statement separately.
\begin{enumerate}[1.]
  \item Let us recall that $\alpha >0$. As a consequence, we have that $\Phi^\eps \in L^\infty(\Omega_F^\eps)$ and, if $w \in L^2(\Omega_F^\eps)$ we can write
  \[
    \int_{\Omega_F^\eps} |w|^2 \Phi^\eps \leq \| \Phi^\eps \|_{L^\infty(\Omega_F^\eps)} \int_{\Omega_F^\eps} |w|^2.
  \]
  
  \item To show this result, we recall that, if $x \in \overline{\Omega_F}$, then $\Phi^\eps(x) \geq \tfrac1{2}$; see Figure~\ref{fig:DomainsAndLayers}. Thus, if $w \in L^2(\Omega_F^\eps,\Phi^\eps)$,
  \[
    \int_{\Omega_F} |w|^2 \leq 2 \int_{\Omega_F} |w|^2 \Phi^\eps \leq 2 \int_{\Omega_F^\eps} |w|^2 \Phi^\eps.
  \]

  \item This has already been mentioned before; see, \eqref{eq:TraceIsL2}.
\end{enumerate}
\end{proof}

The idea behind establishing a bound for the model error, i.e., the difference between the solution to the sharp interface and diffuse domain solutions is simple. We test one problem with the solution of the other and compare. The previous result gives us a framework that justifies this approach. Then, if we compare the differences over $\Omega_F$, these are controlled by integrals over small domain layers. Using \cite[Section 5]{burger2017analysis} as inspiration, we can establish an estimate by invoking smoothness and convergence of diffuse domain integrals.

We now implement this scheme. Let us begin by assuming that $\boldsymbol\kappa$ is constant, that there are stable extensions of all sharp interface functions to the corresponding diffuse domains, and that
\begin{align*}
  \partial_t \bu_s &\in L^2(0,T;L^2(\Omega_F)^d), & \partial_t \bu_d^\eps &\in L^2(0,T;L^2(\Omega_F^\eps,\Phi^\eps)^d), \\
  \partial_t p_s &\in L^2(0,T;L^2(\Omega_D)), & \partial_t p_d^\eps &\in L^2(0,T;L^2(\Omega_D^\eps, \Psi^\eps )).
\end{align*}
Owing to Lemma~\ref{lem:EmbeddingsForModelError} it is legitimate to set $(\bv,\psi,\varphi) = (\be_\bu,e_p,e_\pi) = (\bu_s-\bu_d^\eps,p_s-p_d^\eps,\pi_s-\pi_d^\eps)$ in \eqref{weakSI} and \eqref{SDFP_weak}, where we implicitly extend the sharp interface functions to the diffuse domain. We subtract the ensuing identities, and use that $\nabla\cdot \be_\bu = 0$ almost everywhere, to obtain
\begin{multline}
  \frac12 \frac{d}{dt} \left[ \rho \int_{\Omega_F} |\be_\bu|^2 \Phi^\eps + c_0 \int_{\Omega_D} |e_p|^2 \Psi^\eps  \right] + 2\mu \int_{\Omega_F} |\bD(\be_\bu)|^2 \Phi^\eps + \boldsymbol{\kappa}\int_{\Omega_D} |\nabla e_p|^2  \Psi^\eps  \\
  + \frac{\alpha_{BJ}}{2\eps} \sum_{i=1}^{d-1} \int_{\ell^\eps} (\be_\bu\cdot\tilde{\bftau}_i)^2 |\nabla \dist_\Gamma| 
  =  
  \sum_{j=1}^4 \cR_j^\bu +\sum_{j=1}^4 \cR_j^p + \sum_{j=1}^3 \cR_j^\Gamma + \sum_{j=1}^2 \cR_j^\bF + \sum_{j=1}^2 \cR_j^g,
  \label{eq:ModelErrorEst}
\end{multline}
where
\begin{align*}
  \cR_1^\bu &= \rho \int_{\ell_D^\eps} \partial_t \bu_s \cdot \be_\bu \Phi^\eps, & 
  \cR_2^\bu &= \rho \int_{\ell_F^\eps} \partial_t \bu_s \cdot \be_\bu (\Phi^\eps-1), \\
  \cR_3^\bu &= 2\mu \int_{\ell_D^\eps} \bD(\bu_s):\bD(\be_\bu) \Phi^\eps, & 
  \cR_4^\bu &= 2\mu\int_{\ell_F^\eps} \bD(\bu_s):\bD(\be_\bu)(\Phi^\eps-1), \\
  \cR_1^p &= c_0 \int_{\ell_F^\eps} \partial_t p_s e_p  \Psi^\eps , & 
  \cR_2^p &= c_0 \int_{\ell_D^\eps} \partial_t p_s e_p {(\Psi^\eps-1)}, \\
  \cR_3^p &= {\boldsymbol\kappa} \int_{\ell_F^\eps} \nabla p_s \cdot \nabla e_p  \Psi^\eps , & 
  \cR_4^p &= {\boldsymbol\kappa} \int_{\ell_D^\eps} \nabla p_s \cdot \nabla e_p {(\Psi^\eps-1)}, \\
  \cR_1^\Gamma &= \int_\Gamma e_p (\bu_s\cdot\bn) - \frac1{2\eps}\int_{\ell^\eps} e_p \bu_s \cdot  \nabla \dist_\Gamma, &
  \cR_2^\Gamma &= \frac1{2\eps} \int_{\ell^\eps} p_s \be_\bu\cdot \nabla \dist_\Gamma -\int_\Gamma p_s (\be_\bu\cdot\bn), \\
  \cR_3^\Gamma &= \alpha_{BJ} \sum_{i=1}^{d-1} \cR_{3,i}^\Gamma, &
  \cR_{3,i}^\Gamma &= \frac1{2\eps} \int_{\ell^\eps} (\bu_s\cdot\tilde\bftau_i)(\be_\bu\cdot\tilde\bftau_i)|\nabla\dist_\Gamma| - \int_\Gamma (\bu_s\cdot \bftau_i)(\be_\bu\cdot\bftau_i) \\
  \cR_1^\bF &= -\rho\int_{\ell_D^\eps} \bF^\eps \cdot \be_\bu \Phi^\eps, &
  \cR_2^\bF &= \rho\int_{\ell_F^\eps} \bF\cdot \be_\bu (1-\Phi^\eps), \\
  \cR_1^g &= -\int_{\ell_F^\eps} g^\eps e_p  \Psi^\eps , &
  \cR_2^g &= \int_{\ell_D^\eps} g e_p {(1-\Psi^\eps)},
\end{align*}
and we used that $\Phi^\eps \equiv 1$ on $\Omega_F \setminus \ell_F^\eps$, and $\Phi^\eps \equiv 0$ on $\Omega_D \setminus \ell_D^\eps$, and similar values for $\Psi^\eps$. 

It is now a matter of, using regularity assumptions, estimate the residual terms given above.

\begin{theorem}[Modeling error II]\label{thm:ModelError}
Assume that
\begin{align*}
  \partial_t \bu_s &\in L^\infty(0,T;L^\infty(\Omega_F)^d), &\bD(\bu_s) &\in L^\infty(0,T;L^\infty(\Omega_F)^{d \times d}), \\
  \partial_t p_s &\in L^\infty(0,T;L^\infty(\Omega_D)), &\nabla p_s &\in L^\infty(0,T;L^\infty(\Omega_D)^{d}), \\
  \bF &\in L^\infty(0,T;L^\infty(\Omega_F)^d), &g &\in L^\infty(0,T;L^\infty(\Omega_D)),
\end{align*}
along with their extensions to the corresponding diffuse domains. Assume, in addition, that both $\{\bftau_i\}_{i=1}^{d-1}$ and $\{\tilde{\bftau}_i\}_{i=1}^{d-1}$ have suitable extensions onto $\Omega_F^\eps$ that belong to $C^1(\overline{\Omega_F^\eps})$. Then, we have
\[
  \frac12 \frac{d}{dt} \left[ \rho \int_{\Omega_F} |\be_\bu|^2 \Phi^\eps + c_0 \int_{\Omega_D} |e_p|^2  \Psi^\eps  \right] + 2\mu \int_{\Omega_F} |\bD(\be_\bu)|^2 \Phi^\eps + \boldsymbol{\kappa}\int_{\Omega_D} |\nabla e_p|^2  \Psi^\eps  \lesssim \eps^{1/2},
\]
where the implicit constant depends on the smoothness assumptions, but not on $\eps$.
\end{theorem}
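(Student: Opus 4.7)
The plan is to start from the error identity \eqref{eq:ModelErrorEst} and show that each of the eleven residual terms is $\lesssim \eps^{1/2}$, with implicit constants depending on the $L^\infty$ regularity of the sharp interface data and on a priori bounds for $\be_\bu$ and $e_p$. Those a priori bounds come from applying Theorem~\ref{thm:WellPosedSharpInterface} and Theorem~\ref{WellPosednes} separately, which yield $\eps$--uniform control of $\be_\bu$ and $e_p$ in $L^\infty(0,T;L^2(\Omega_F^\eps,\Phi^\eps)^d) \cap L^2(0,T;\mathcal{V}^\eps)$ and the analogous Darcy-side space. These bounds will allow us to close the estimate pointwise in time.

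First I would handle the eight volume residuals $\cR_j^\bu, \cR_j^p, \cR_j^\bF, \cR_j^g$. Each is an integral of a product of one smooth factor (data) against the error over a thin layer $\ell_F^\eps$ or $\ell_D^\eps$. A Cauchy--Schwarz step gives
\[
 |\cR_1^\bu| \leq \rho \| \partial_t \bu_s\|_{L^\infty(\Omega_F^\eps)} \left( \int_{\ell_D^\eps} \Phi^\eps \right)^{1/2} \| \be_\bu \|_{L^2(\Omega_F^\eps,\Phi^\eps)^d},
\]
and because $\Phi^\eps(x) \approx \frac1{2}\dist_{\Gamma_\eps}(x)^\alpha$ on $\ell_D^\eps$ with $\alpha \in (0,1)$ and $|\ell_D^\eps| \lesssim \eps$, one obtains $(\int_{\ell_D^\eps} \Phi^\eps)^{1/2} \lesssim \eps^{(1+\alpha)/2} \lesssim \eps^{1/2}$. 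The same idea works for $\cR_2^\bu$, with $1-\Phi^\eps$ playing the role of $\Phi^\eps$, and for $\cR_3^\bu,\cR_4^\bu$ after using the $L^\infty$ bound on $\bD(\bu_s)$ together with the Poincar\'e--Korn estimate \eqref{eq:weightedKorn} to control $\|\be_\bu\|_{\mathcal{V}^\eps}$. The four residuals involving $p_s$, $\bF$, and $g$ are bounded in an entirely symmetric fashion using $\Psi^\eps$, giving each one a bound of the form $\eps^{1/2}$ times an a priori-bounded error norm.

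The interface residuals $\cR_1^\Gamma,\cR_2^\Gamma,\cR_3^\Gamma$ are the real difficulty and should be treated next. Here we compare a sharp surface integral on $\Gamma$ to its diffuse counterpart weighted by $\tfrac1{2\eps}|\nabla \dist_\Gamma|$. For each, I would split the error as (smooth data) $\times$ (rough error) and apply \cite[Theorem~5.6]{burger2017analysis} after localizing in the tubular neighborhood of $\Gamma$ via the nearest-point projection $x\mapsto \bar x$. Writing, for instance,
\[
 \cR_2^\Gamma = \frac1{2\eps}\int_{\ell^\eps} [p_s(x) - p_s(\bar x)]\, \be_\bu(x)\cdot \nabla \dist_\Gamma(x)
   + \left[\frac1{2\eps}\int_{\ell^\eps} p_s(\bar x)\, \be_\bu \cdot \nabla\dist_\Gamma -\int_\Gamma p_s(\be_\bu\cdot\bn)\right],
\]
the first term is $O(\eps)$ by Lipschitz continuity of $p_s$ plus Lemma~\ref{nablaPhi}, while the second is handled by the coarea formula and Taylor expansion of the Jacobian of the parametrization, yielding an estimate $\lesssim \eps^{1/2}\|\be_\bu\|_{\mathcal{V}^\eps}$ after Cauchy--Schwarz (we lose a factor of $\eps^{1/2}$ compared with the fully smooth case because $\be_\bu$ is only $H^1$). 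The tangential term $\cR_3^\Gamma$ is estimated analogously, using the hypothesized $C^1$ extensions of $\bftau_i$ and $\tilde\bftau_i$ to handle $|\bftau_i - \tilde\bftau_i| = O(\dist_\Gamma)$ in $\ell^\eps$.

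Finally, I would combine all residual bounds: each is of the form $\eps^{1/2}$ times a quantity that is a priori bounded uniformly in $\eps$ by the energy estimates of both the sharp and diffuse problems. This yields the claimed differential inequality. The principal obstacle is unquestionably the geometric accounting for $\cR_1^\Gamma,\cR_2^\Gamma,\cR_3^\Gamma$: the non-smooth factor $\be_\bu$ (respectively $e_p$) prevents a direct invocation of \cite[Theorem~5.6]{burger2017analysis}, forcing the add-and-subtract strategy with the nearest-point projection and the use of the weighted trace Lemma~\ref{nablaPhi} as a substitute for classical surface trace estimates on $\Gamma$.
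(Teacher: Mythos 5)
Your overall skeleton---start from the identity \eqref{eq:ModelErrorEst} and bound each residual---matches the paper, and your treatment of the eight volume residuals is sound in spirit, but two steps fail as written. First, the closure. You propose to bound each residual by $\eps^{1/2}$ times an a priori bounded error norm and ``close the estimate pointwise in time.'' For $\cR_3^\bu,\cR_4^\bu,\cR_3^p,\cR_4^p$ and all three interface residuals your bounds involve $\|\bD(\be_\bu)(t)\|_{L^2(\Omega_F^\eps,\Phi^\eps)^{d\times d}}$ or $\|\nabla e_p(t)\|_{L^2(\Omega_D^\eps,\Psi^\eps)^d}$, and the energy identities of Theorems~\ref{thm:WellPosedSharpInterface} and~\ref{WellPosednes} control these quantities only in $L^2(0,T)$, not pointwise in time; since the conclusion is a pointwise-in-time differential inequality, the argument does not close. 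The paper instead absorbs these factors into the dissipation on the left-hand side via Young's inequality, which is why it needs only $L^\infty$ regularity of the data and no a priori error bound at all. This is easily repaired, but it must be repaired.

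Second, and more substantively, the interface residuals, where your route genuinely diverges from the paper's. The paper converts the sharp integral over $\Gamma$ into a volume integral by the divergence theorem, rewrites the diffuse integral as a volume integral by observing $-\tfrac1{2\eps}\nabla\dist_\Gamma=\nabla(1-\omega^\eps)$ and integrating by parts, and then compares the two volume integrals of $\nabla\cdot(e_p\bu_s)$ as in \cite[Lemma 5.4]{burger2017analysis}; the rough factor is never evaluated pointwise off $\Gamma$. Your route---nearest-point projection, coarea formula, and a fundamental-theorem-of-calculus step in the normal direction---requires an estimate of the form $\int_\Gamma|\be_\bu(y+s\bn)-\be_\bu(y)|^2\,d\Gamma\lesssim |s|\,\|\nabla\be_\bu\|^2_{L^2(\ell^\eps)^{d\times d}}$, i.e.\ \emph{unweighted} $L^2$ control of $\nabla\be_\bu$ (respectively $\nabla e_p$) on the layer. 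On $\ell_D^\eps$ the weight behaves like $\tfrac12\dist_{\Gamma^\eps}^\alpha$ and vanishes at the outer edge of the layer, so $\|\nabla\be_\bu\|_{L^2(\ell_D^\eps)^{d\times d}}$ is \emph{not} controlled by $\|\be_\bu\|_{\mathcal{V}^\eps}$; the symmetric problem occurs for $e_p$ on $\ell_F^\eps$ where $\Psi^\eps$ degenerates. The claimed bound $\lesssim\eps^{1/2}\|\be_\bu\|_{\mathcal{V}^\eps}$ therefore does not follow from your sketch. It can be rescued by exploiting the explicit one-dimensional profile of the weight: for $|s|<\eps$ one only integrates over $\sigma\in(0,|s|)$ where $\Phi^\eps\geq\tfrac12(1-|s|/\eps)^\alpha$, and the resulting singular factor $(1-|s|/\eps)^{-\alpha}$ is integrable in $s$ precisely because $\alpha<1$---but that computation is the crux of the matter and is missing from your argument.
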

\begin{proof}
Using the assumed regularity, each one of the terms in \eqref{eq:ModelErrorEst} is estimated as follows. First, we use that $0\leq \Phi^\eps \leq 1$, that on $\ell_F^\eps$ we have $\Phi^\eps \geq \tfrac1{2}$, the Cauchy Schwartz, and the weighted Korn's inequality of \eqref{eq:wKorn1} to infer
\[
  \sum_{j=1}^4 \cR_j^\bu \leq C \int_{\ell^\eps}\left(  | \partial_t \bu_s |^2 + | \bD (\bu_s) |^2 \right) + \frac\mu{4} \int_{\Omega_F^\eps} |\bD(\be_\bu)|^2 \Phi^\eps \leq C \eps + \frac\mu{4} \int_{\Omega_F^\eps} |\bD(\be_\bu)|^2 \Phi^\eps,
\]
where in the last step we used \eqref{eq:MeasureOfLayer}. Similarly,
\begin{align*}
  \sum_{j=1}^4 \cR_j^p &\leq C \int_{\ell^\eps} \left( | \partial_t p_s |^2 + | \nabla p_s |^2 \right) + \frac{\boldsymbol\kappa}{4} \int_{\Omega_D^\eps} |\nabla e_p|^2  \Psi^\eps
    \leq C \eps + \frac{\boldsymbol\kappa}{4} \int_{\Omega_D^\eps} |\nabla e_p|^2 \Psi^\eps, \\
  \sum_{j=1}^2 \cR_j^\bF &\leq C \int_{\ell^\eps} | \bF^\eps |^2 + \frac{\mu}4\int_{\Omega_F^\eps} |\bD(\be_\bu)|^2 \Phi^\eps
    \leq C \eps + \frac{\mu}4\int_{\Omega_F^\eps} |\bD(\be_\bu)|^2 \Phi^\eps, \\
  \sum_{j=1}^2 \cR_j^g &\leq C \int_{\ell^\eps} | g^\eps |^2 + \frac{\boldsymbol\kappa}4\int_{\Omega_D^\eps} |\nabla e_p|^2 \Psi^\eps
    \leq C \eps + \frac{\boldsymbol\kappa}4\int_{\Omega_D^\eps} |\nabla e_p|^2 \Psi^\eps
\end{align*}

We now estimate the diffuse interface terms. First we observe that, after implicitly extending to diffuse domains, we have $e_p \in H^1(\Omega_D^\eps,  \Psi^\eps )$ and $\bu_s \in W^{1,\infty}(\Omega_F)^d$. This implies that $e_p \bu_s \in \mathcal{X}^\eps$. This, in particular, implies that
\[
  \int_\Gamma e_p (\bu_s\cdot\bn) = \int_{\Omega_D} \nabla \cdot \left( e_p \bu_s \right).
\]
Notice now that, with the notation used in the proof of Lemma~\ref{nablaPhi}, we have
\[
  - \frac1{2\eps}\int_{\ell^\eps} e_p \bu_s \cdot  \nabla \dist_\Gamma = \int_{\Omega_D^\eps} e_p \bu_s \cdot  \nabla (1-\omega^\eps) = - \int_{\Omega_D^\eps} \nabla \cdot \left( e_p \bu_s \right)(1-\omega^\eps).
\]
Therefore, arguing as in \cite[Lemma 5.4]{burger2017analysis} yields
\[
  \cR_1^\Gamma = \int_{\Omega_D} \nabla \cdot \left( e_p \bu_s \right) - \int_{\Omega_D^\eps} \nabla \cdot \left( e_p \bu_s \right)(1-\omega^\eps) \lesssim \eps^{1/2}.
\]
A similar reasoning also gives $\cR_2^\Gamma \lesssim \eps^{1/2}$.

It remains then to estimate $\cR_3^\Gamma$. Under the assumption that the tangents have $C^1$ extensions, each of the terms that define $\cR_3^\Gamma$ fits into the setting of \cite[Lemma 5.4]{burger2017analysis}. This finally gives
\[
  \cR_3^\Gamma \lesssim \eps^{1/2},
\]
and proves the result.
\end{proof}

As a final comment we mention that, if we assume further regularity on the sharp interface solutions, a better order of convergence can be obtained. More precisely, let $\alpha$ be the exponent from the definition of the phase field function $\Phi^{\eps}$, see \eqref{eq:SfunctionBurgers}. Then under suitable regularity assumptions on the sharp interface solution, one can prove that the convergence rate is $\frac{3}{2}-(1-\alpha)$, i.e., arbitrarily close to the convergence rate from Theorem \ref{ModellinError} by a suitably choice of the phase field function. The proof is analogous to the proof of Theorem \ref{ModellinError} with one important difference.  Namely, the proof of Theorem \ref{ModellinError} relies on the results from \cite[Section 5]{burger2017analysis} which assume that the phase field function is Lipschitz and the function $S$ used \eqref{eq:defofPhiPsi}  is concave on $(0,1)$. Since both of these conditions are not satisfied with our choice of the phase field function (they are not compatible with the condition $\Phi^{\eps}\in A_2$), we cannot use the results from \cite[Section 5]{burger2017analysis} directly. However, by introducing suitable modification in their proofs, one can prove analogous convergence results for diffuse integrals that are valid also for a broader class of weights that also includes our choice of phase field function. The only trade-off is a loss of convergence of order $(1-\alpha)$. For brevity, we will not pursue this here.

\section{Numerical results}\label{Sec:numerics}

In this section, we illustrate the accuracy of our diffuse interface approach and further explore its capabilities with a series of numerical illustrations. All of our computations were carried out using the finite element solver \emph{FreeFem++}~\cite{hecht2012new}. 

We must comment that, for implementation reasons, all the terms in the numerical scheme are formulated over the entire domain $\Omega$. Furthermore, in all cases we considered, we must regularize the phase field functions $\Phi^\eps$ and $\Psi^\eps$ as described in \eqref{regularizacija}. Otherwise, the ensuing system matrices become singular, as all the degrees of freedom that belong to one diffuse subdomain but not the other would have a zero row in these matrices. The value of the regularization parameter $\delta$ is indicated in each example. The numerical method used in this section is summarized as follows: For $n = 0, \ldots, N-1$, find $(\bu_h^{n+1}, p_h^{n+1}, \theta_h^{n+1} )$ such that, for every $(\bv_{h},\psi_h,\varphi_h)$, we have:
\begin{equation}
  \begin{aligned}
	\rho\int_{\Omega}\partial_t \bu^{n+1}_h \cdot\bv_h \Phi^{\epsilon}
	+c_0\int_{\Omega} \partial_t p^{n+1}_h \psi_h \Psi^{\epsilon}  
	+2\mu\int_{\Omega}\bD(\bu^{n+1}_h):\bD(\bv_h)\Phi^\eps
	 -\int_{\Omega} \nabla \cdot \bv_h \pi^{n+1} \Phi^\eps
	 +\int_{\Omega} \nabla \cdot \bu_h^{n+1} \varphi_h \Phi^\eps
	\\ 
	+\int_{\Omega} \boldsymbol\kappa\nabla p^{n+1}_h \cdot \nabla \psi_h \Psi^{\epsilon}
	-\int_{\Omega} p^{n+1}_h \bv_h \cdot \nabla\Phi^{\epsilon}
	+\int_{\Omega} \psi_h \bu^{n+1}_h \cdot \nabla\Phi^{\epsilon}
	+\alpha_{BJ} \sum_{i=0}^{d-1}\int_{\Omega}(\bu^{n+1}_h \cdot \tilde{\bftau}_i)(\bv_h\cdot \tilde{\bftau}_i) |\nabla\Phi^{\epsilon}| 
 		\\ 
	=\int_{\Omega}\bF^{n+1}\cdot\bv_h\Phi^{\epsilon}+\int_{\Omega}g^{n+1}\psi \Psi^{\eps}.
  \end{aligned}
 \label{comp_method}
\end{equation}

\subsection{Rates of convergence}\label{sub:rates}

To test the rates of convergence we proved in previous sections, we use the method of manufactured solutions on a benchmark problem previously used in~\cite{layton2012long}. We define the Stokes domain as $\Omega_F=(0,1)\times (0,1)$ and the Darcy domain as $\Omega_D=(0,1) \times (1,2)$. Therefore, the computational domain used in this example is $\Omega=\Omega_F \cup \Omega_D = (0,1) \times (0,2)$, and the interface is $\Gamma = (0,1) \times \{1\}$. The phase field function, in this example, is defined as
\[
  \Phi^{\epsilon} = \frac12 \left(1+\tanh \left(\frac{y-1}{\eps} \right) \right),
\]
and regularized as in \eqref{regularizacija}. Notice that this weight is smooth near the interface.

The exact, manufactured, solution is
\begin{align*}
  \bu^{exact}(x,y,t) &= \left(-\frac{1}{\hat{\pi}} e^y \sin(\hat{\pi} x) \cos(2 \hat{\pi} t), \;
                          \left( e^y -e \right) \cos(\hat{\pi} x) \cos(2 \hat{\pi} t) \right)^\intercal,\\
  \pi^{exact}(x,y,t) &= 2 e^y \cos(\hat{\pi} x) \cos(2 \hat{\pi} t), \\
  p^{exact}(x,y,t) &= \left( e^y -e y \right) \cos(\hat{\pi} x) \cos(2 \hat{\pi} t),
\end{align*}
where $\hat\pi=3.14159\ldots$ is the universal constant equal to the ratio between the circumference and the diameter of the unit disk. On the bottom boundary, we impose Dirichlet boundary conditions for the Darcy pressure, and on the top boundary we impose Dirichlet conditions for the Stokes velocity. On the left and right boundaries, we impose Neumann conditions for both Stokes and Darcy regions. All the boundary conditions are computed using the exact solution. We set the physical parameters in this example to be $\rho=\nu=c_0=\alpha_{BJ}=1$, $\boldsymbol \kappa =\mathbb{I}$, where $\mathbb{I}$ is the identity matrix, and $T=1$.

The Neumann-type boundary conditions are imposed using the diffuse interface approach as follows:
\begin{align*}
  \int_{\Gamma^F_{\textrm{left/right}}} \bfsigma  \bn \cdot \bv&= \int_{\Gamma_{\textrm{left/right}}} \bfsigma  \bn \cdot \bv \Phi^{\eps},
\\
\int_{\Gamma^D_{\textrm{left/right}}} \boldsymbol\kappa \nabla p \bn \psi &= \int_{\Gamma_{\textrm{left/right}}} \boldsymbol\kappa \nabla p \bn \psi  \Psi^\eps ,
\end{align*}
where $\Gamma^F_{\textrm{left/right}}$ and $\Gamma^D_{\textrm{left/right}}$ denote the left and right boundaries to $\Omega_F$ and $\Omega_D$, respectively, and $\Gamma_{\textrm{left/right}}$ denotes the left and right boundary to $\Omega$. We use $\mathbb{P}_2-\mathbb{P}_1$ elements for the Stokes velocity and pressure, and $\mathbb{P}_1$ elements for the Darcy pressure. We initially set $\Delta t=h=\tfrac15$, $\eps =h$, and $\delta = 10^{-3}$. These parameters are then refined by halving them. The total velocity and pressure are defined, respectively, by
\[
  \bu_{tot} = \bu   \Phi^{\eps}  +\bq  \Psi^{\eps}, \quad p_{tot} =\pi   \Phi^{\eps}  +p \Psi^{\eps},
\]
with their aid we define the relative errors of our scheme to be
\[
  \mathbf{e}_u = \frac{\|\bu_{tot}^{exact} - \bu_{tot}^{\eps}  \|_{L^2(\Omega;\R^2)}}{\| \bu_{tot}^{exact} \|_{L^2(\Omega)^2}},
\qquad
  p_{tot} =\pi   \Phi^{\eps}  +p(1-  \Phi^{\eps}), \quad  {e}_p = \frac{\|p^{exact}_{tot} - p^{\eps}_{tot} \|_{L^2(\Omega)}}{\| p^{exact}_{tot}   \|_{L^2(\Omega)}}.
\]
evaluated at the final time, $T$. In addition to the method based on the backward Euler time discretization described in~\eqref{comp_method}, we also consider a method based on the midpoint scheme. The solution to the midpoint scheme is obtained by solving~\eqref{comp_method} over a half time interval (using $\Delta t/2$), resulting in $(\bu^{n+\frac12}, p^{n+\frac12}, \theta^{n+\frac12})$, and then extrapolating the solution as:
\[
\bu^{n+1} = 2 \bu^{n+\frac12} - \bu^n, \quad
p^{n+1} = 2 p^{n+\frac12} - p^n, \quad
\pi^{n+1} = 2 \pi^{n+\frac12} - \pi^n,
\]
at each time step, as described in~\cite{burkardt2020refactorization}.

\begin{table}[ht]
\begin{center}
 \begin{tabular}{c | c c | c c || c c | c c} 
   &  \multicolumn{4}{c}{backward Euler} & \multicolumn{4}{c}{midpoint}\\
 $h$ & $\mathbf{e}_u$ & rate & $e_p$ & rate & $\mathbf{e}_u$ & rate & $e_p$ & rate
 \\
  \hline  
 $\frac15$ & $3.96\cdot 10^{-1} $ & - & $4.69 \cdot 10^{-1}$  & - & $7.84 \cdot 10^{-1}$ & - &  $1.83 \cdot 10^{0}$ & -
 \\
 $\frac1{10}$ & $9.41\cdot 10^{-2}$ & 2.07 & $1.10 \cdot 10^{-1}$ & 2.09 & $1.17 \cdot 10^{-1}$ & 2.75 &  $1.57  \cdot 10^{-1}$ & 3.55
  \\
 $\frac1{20}$ & $4.06\cdot 10^{-2}$ & 1.21 & $4.80  \cdot 10^{-2}$ & 1.20  & $3.05  \cdot 10^{-2}$  & 1.94 & $3.14  \cdot 10^{-2}$ & 2.32
   \\
 $\frac1{40}$ & $1.87 \cdot 10^{-2}$ & 1.12 & $2.27 \cdot 10^{-2}$ & 1.08 & $9.58  \cdot 10^{-3}$  & 1.67 &  $ 6.81  \cdot 10^{-3}$  & 2.21
    \\
 $\frac1{80}$ & $8.90 \cdot 10^{-3}$ & 1.07 & $1.11  \cdot 10^{-2}$ & 1.03 & $3.36  \cdot 10^{-3}$  & 1.51 & $1.88  \cdot 10^{-3}$ & 1.86
 \end{tabular}       
\end{center}
\caption{Rates of convergence for the total velocity and pressure for the problem of Section~\ref{sub:rates}. The results were obtained with $\mathbb{P}_2-\mathbb{P}_1$ elements for the Stokes velocity and pressure and $\mathbb{P}_2$ elements for the Darcy pressure. We set $\Delta t=h=\epsilon$. The regularization parameter, $\delta$, is initially set to $10^{-3}$, and then refined at the same rate as $h$.}
\label{ratesP1}
\end{table}

Table~\ref{ratesP1} shows the rates of convergence, which agree with the theory. In particular, the first-order convergence is obtained when the backward Euler scheme is used for time discretization, while a loss of a half an order is observed when the midpoint method is used, agreeing with the estimates in~\eqref{ModellinEstimate} which indicate that the dominant error in this case would be due to term $\mathcal{O}(\eps^{\frac32})$. 

\begin{figure}[ht]
  \begin{center}
    \includegraphics[scale=0.6]{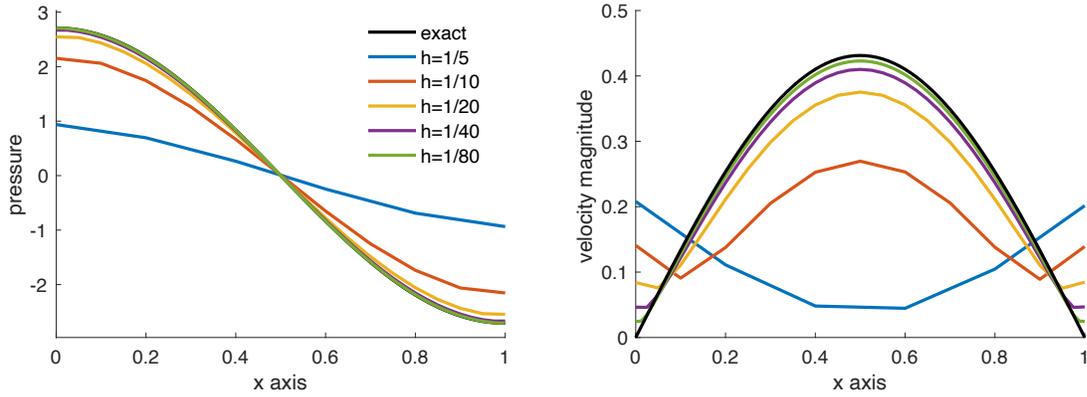}
  \end{center}
\caption{The total pressure (left) and the total velocity magnitude (right) over the interface $\Gamma = (0,1) \times \{1\}$ obtained for different values of $h$ for the problem of Section~\ref{sub:rates}.}
\label{intplot}
\end{figure}
We also plot, in Figure~\ref{intplot}, the Darcy pressure and the fluid velocity over the interface $\Gamma$ for different values of $h$.  We observe that the approximate solutions converge towards the exact ones as $h$ decreases. A more accurate approximation can be obtained if the meshes are refined near the interface, as shown in Example~\ref{sub:fun} below.

\subsection{An interesting interface}\label{sub:fun}

We now consider a benchmark problem with a curved boundary. We define the domain to be $\Omega = (0,1) \times (-1,1)$: see Figure~\ref{domainEx2}. 
\begin{figure}[ht]
  \begin{center}
    \begin{tikzpicture}
      \draw plot[domain=0:3,smooth] (\x, {0.3*sin(4*pi* \x r/3)});
      \draw plot[domain=0:3,smooth] (\x, {-3});
      \draw plot[domain=0:3,smooth] (\x, {3});
      \draw plot[domain=-3:3,smooth] (0, {\x});
      \draw plot[domain=-3:3,smooth] (3, {\x});
      \coordinate (A) at (-0.5,0.5);
      \coordinate (B) at (3.5,0.5);
      \coordinate (C) at (1.5,1.4);
      \coordinate (D) at (1.2,-0.9);
      \coordinate (E) at (-0.5,-2.5);
      \coordinate (F) at (3.5,-2.5);
      \coordinate (G) at (1.5,-3.8);
      \coordinate (I) at (1.5,0.3);
      \coordinate (J) at (1.5,-2.5);
      \node[yshift=1.2cm] at (A) {$\Gamma_D^2$};
      \node[yshift=1.2cm] at (B) {$\Gamma_D^2$};
      \node[yshift=1.2cm] at (C) {$\Gamma_D^1$};
      \node[yshift=1.2cm] at (D) {$\Gamma$};
      \node[yshift=1.2cm] at (E) {$\Gamma_F^{in}$};
      \node[yshift=1.2cm] at (F) {$\Gamma_F^2$};
      \node[yshift=1.2cm] at (G) {$\Gamma_F^1$};
      \node[yshift=1.2cm] at (I) {$\Omega_D$};
      \node[yshift=1.2cm] at (J) {$\Omega_F$};
    \end{tikzpicture}
  \end{center}
  \caption{The domain used in the example of Section~\ref{sub:fun}.} \label{domainEx2}
\end{figure}
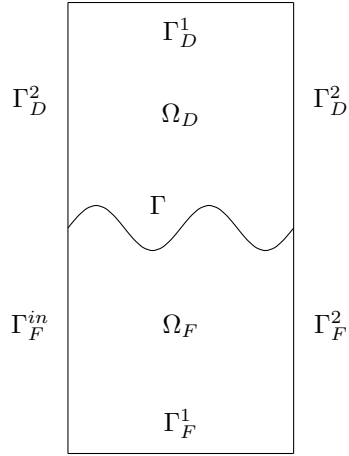
The interface, $\Gamma$, is given by 
\[
  \Gamma = \{ (x,y)  \in \mathbb{R}^2\; : \; x \in (0,1),  \; y = 0.1 \sin(4\pi x)\}.  
\]
The top part of the domain represents the Darcy region, whereas the bottom part corresponds to the Stokes domain. The boundary conditions are as in \eqref{StokesBC}--\eqref{DarcyBC}, except on $\Gamma_F^{in}$, the inflow boundary, where we prescribe
\begin{align*}
  \bu =\left( -u_{in} \frac{y (1+y)}{0.5^2}, \; 0 \right)^\intercal  \quad \textrm{on } \Gamma_F^{in} \times (0,T),
\end{align*}
with $u_{in} = 10$. The physical parameters used in this example are given in Table~\ref{example2Par}. 

\begin{table}[ht]
  \begin{tabular}{l l | l l }
    \textbf{Parameters} & \textbf{Values} & \textbf{Parameters} & \textbf{Values}  \\
    \hline
    \hline
    \textbf{Fluid density} $\rho_f$ (g/cm$^3$)& $1$ &\textbf{Dynamic viscosity} $\mu$ (poise) & $0.035$    \\
    \textbf{Storativity coeff.} $c_0$ (cm$^2$/dyne) & $10^{-3}$ & \textbf{Hydraulic conductivity} $\boldsymbol \kappa$ (cm$^3$ s/g) & $10^{-5} \mathbb{I}$     \\
     \textbf{Slip rate} $\gamma $   (g/cm$^2$ s)& $10^3$    & \textbf{Final time} $T$ (s) & 3\\
  \end{tabular}
\caption{The  parameters used for the problem of Section~\ref{sub:fun}.}
\label{example2Par}
\end{table}

We solve this problem using both sharp and diffuse interface formulations. In both cases, we use MINI elements for the Stokes velocity and pressure and $\mathbb{P}_1$ elements for the Darcy pressure. The Darcy velocity is obtained from the pressure by post-processing.  In the diffuse interface formulation, the phase-field function $\Phi^{\eps}$ is defined as
\[
  \Phi^{\eps} = \frac12 \left(1+\tanh\left(\frac{-y+0.1 \sin(4 \pi x)}{\eps}\right) \right),
\] 
and regularized as described in \eqref{regularizacija}, with regularization parameter $\delta = 10^{-3}$. In both cases, the time step is taken to be $\Delta t =10^{-2}$. The sharp interface problem is solved on a mesh containing 3542 elements, labeled as Mesh 1 in Figure~\ref{ex2meshes}. For the diffuse interface configuration, we consider three unstructured meshes: Meshes 2--4. Mesh 2 and Mesh 3 consist of 12281 and 4129 elements, respectively, and are both refined close to the interface. Mesh 4 is a structured mesh consisting of 7200 elements; see~Figure~\ref{ex2meshes}. 
\begin{figure}[ht]
  \begin{center}
    \includegraphics[scale=1.1]{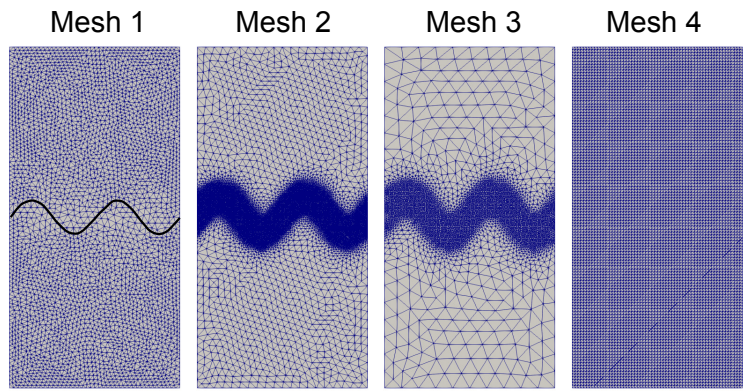}
  \end{center}
\caption{Computational meshes used in the problem of Section~\ref{sub:fun}.  Mesh 1  is used for the sharp interface problem. Meshes 2, 3 and 4  are used for the diffuse interface problem.}
\label{ex2meshes}
\end{figure}

\begin{figure}[ht]
  \begin{center}
    \includegraphics[scale=0.2]{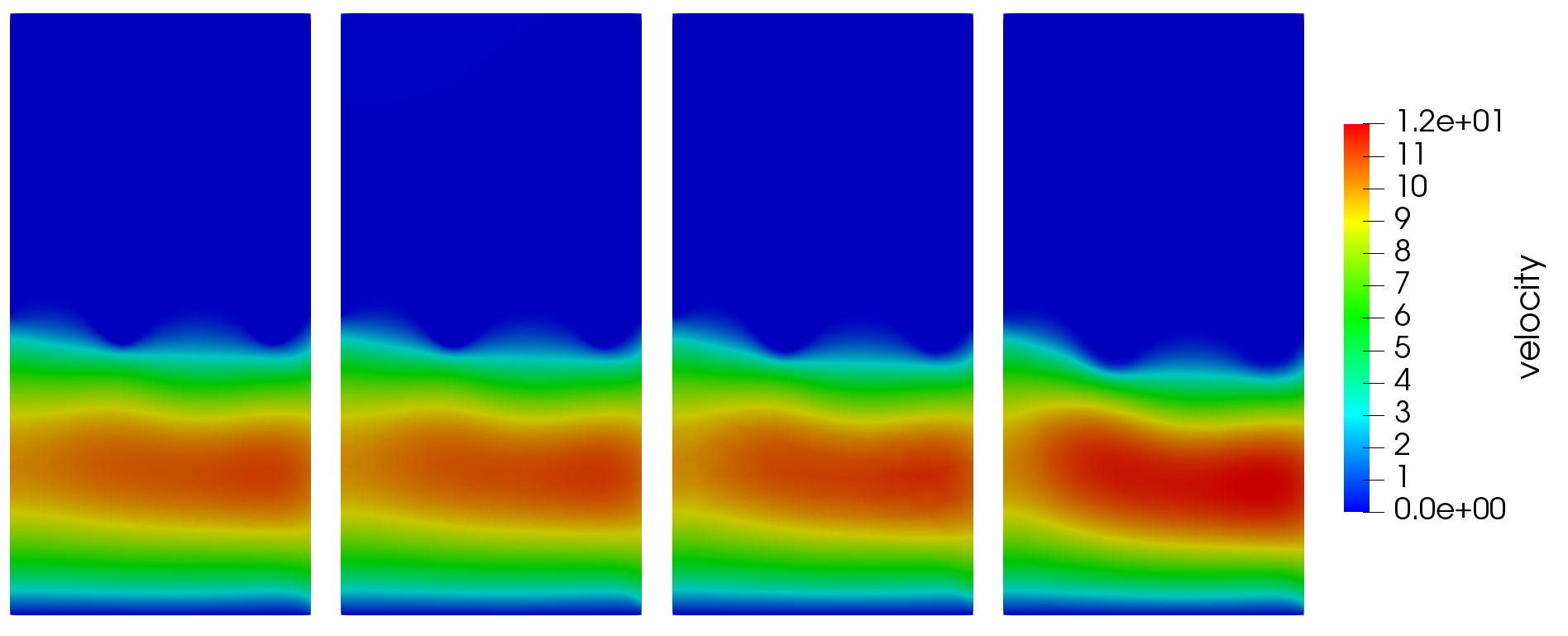}
  \end{center}
\caption{Results for the problem of Section~\ref{sub:fun}. Stokes and Darcy velocities obtained using the sharp interface formulation on Mesh 1 (left). The total velocity obtained using the diffuse interface formulation on Meshes 2 (middle-left), 3 (middle-right) and 4 (right).}
\label{ex2velocity}
\end{figure}
Figure~\ref{ex2velocity} shows the velocity obtained using the sharp interface model (leftmost panel), and the diffuse interface model on Mesh 2 (middle-left panel), Mesh 3 (middle-right panel) and Mesh 4 (rightmost panel). The flow enters the Stokes region from the left and is parallel to the Darcy region. Due to the small hydraulic conductivity coefficient, the flow in the Darcy region is much smaller than the flow in the Stokes region. We can observe that the diffuse interface formulation gives a good approximation of the velocity when the problem is solved using meshes which are refined close to the interface, while small differences are seen when a uniform mesh is used.

\begin{figure}[ht]
  \begin{center}
    \includegraphics[scale=0.2]{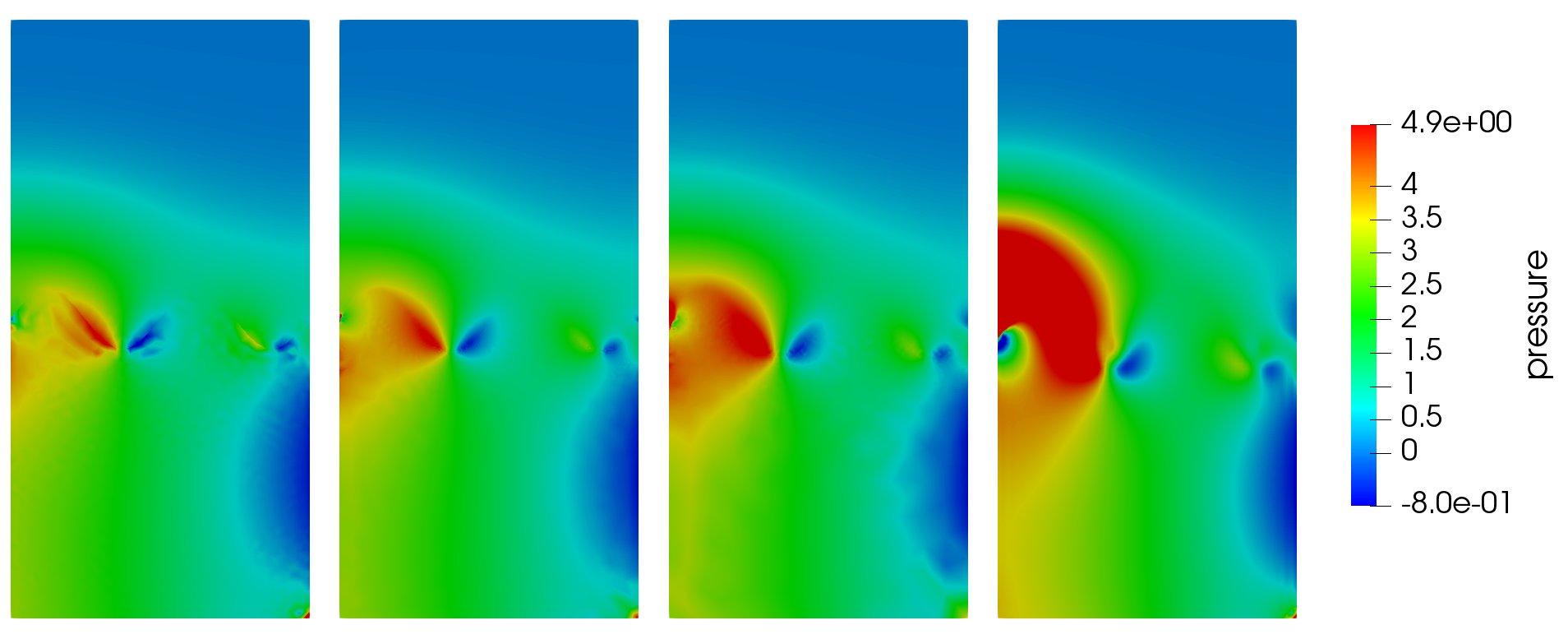}    
  \end{center}
\caption{Results for the problem of Section~\ref{sub:fun}. Stokes and Darcy pressures obtained using the sharp interface formulation on Mesh 1 (left). The total pressure obtained using the diffuse interface formulation on Meshes 2 (middle-left), 3 (middle-right) and 4 (right).}
\label{ex2pressure}
\end{figure}

The Stokes and the  Darcy pressures are shown in Figure~\ref{ex2pressure}. As before, the four panels represent the solution obtained on the four different meshes shown in Figure~\ref{ex2meshes}, where the first mesh is used for the sharp interface method, while the other meshes are used for the diffuse interface method. We observe that the pressure shows more sensitivity to the mesh than the velocity. The solution obtained using the diffuse interface approach seems to much better approximate the sharp interface solution when the mesh is locally refined towards to the interface. 
\begin{figure}[ht]
  \begin{center}
    \includegraphics[scale=0.9]{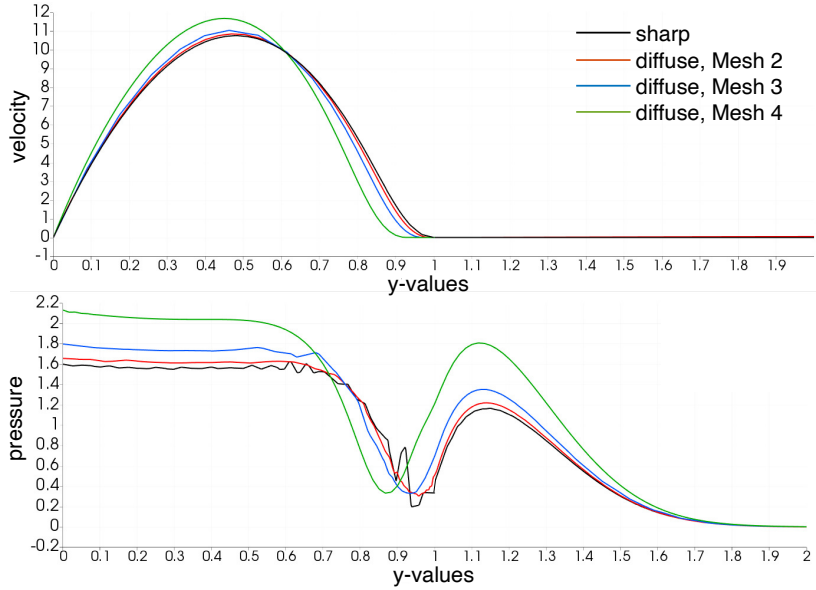}
  \end{center}
\caption{Results for the problem of Section~\ref{sub:fun}. The velocity (top) and pressure (bottom) obtained using the sharp interface method on Mesh 1, and the diffuse interface method on Meshes 2,3, and 4, plotted over the line $x=0.5$. }
\label{ex2LP}
\end{figure}

In order to additionally visualize the solution, we plot the velocity and pressure along a vertical line in the center of the domain,  $x=0.5$.
The one-dimensional plots obtained this way are shown in Figure~\ref{ex2LP}. As observed in Figure~\ref{ex2velocity}, the diffuse interface solution gives a good approximation of the velocity when Meshes 2 and 3 are used. Regarding the pressure, we see that the diffuse interface solution approaches the sharp solution as the mesh is refined. However, we also note that the sharp interface solution exhibits oscillations, particularly in the Stokes region, while the diffuse interface solution remains mostly smooth.

\subsection{Flow through a hexagonal network}\label{sub:hexagon}

This example focuses on modeling the flow in a hexagonal network, motivated by the flow in the lymph capillary network in a mouse tail (see Figure~\ref{ex3mt}).  
\begin{figure}[ht]
  \begin{center}
    \includegraphics[scale=3.3]{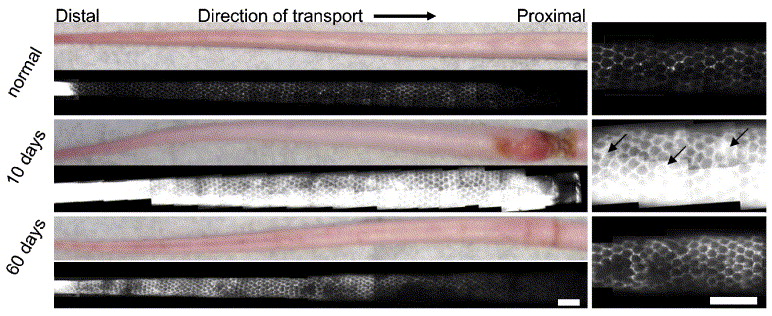}
  \end{center}
\caption{ A well-defined hexagonal network of dermal lymphatics that transport a fluorescent tracer in a mouse tail skin obtained in~\cite{rutkowski2006secondary}. The top panel shows the normal mouse tail skin.  At 10 days of lymphedema (shown in middle panel), the tail was swollen and the fluorescent tracer filled both the lymphatic vessels and the interstitial space (back flow indicated by arrows). At 60 days (bottom panel), lymphatic transport was improved but not fully restored. Reproduced with permission.}
\label{ex3mt}
\end{figure}
The computational domain is a rectangle with length 0.0696 cm and height 0.058 cm.   The computational mesh and the phase-field function for this problem are shown in Figure~\ref{ex3pf}. The mesh consists of 199,511 nodes and 398,491 elements.
\begin{figure}[ht]
  \begin{center}
    \includegraphics[scale=0.6]{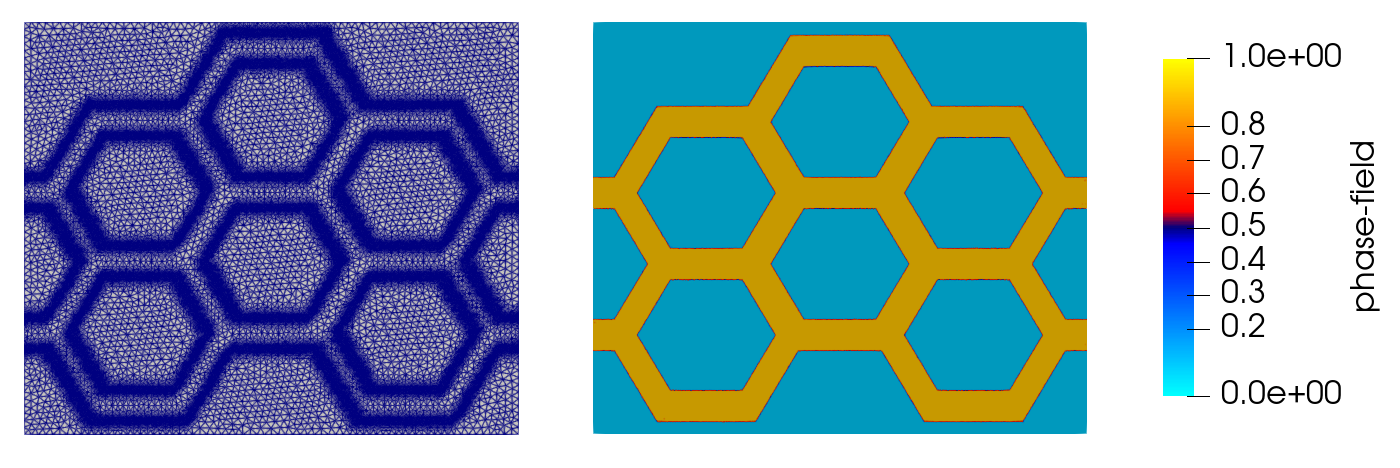}
  \end{center}
\caption{Computational mesh and phase field function for the problem of Section~\ref{sub:hexagon}.}
\label{ex3pf}
\end{figure}

\begin{table}[ht!]
\begin{tabular}{l l | l l }
\textbf{Parameters} & \textbf{Values} & \textbf{Parameters} & \textbf{Values}  \\
\hline
\hline
\textbf{Fluid density} $\rho_f$ (g/cm$^3$)& $1$ &\textbf{Dynamic viscosity} $\mu$ (poise) & $0.015$    \\
\textbf{Storativity coeff.} $c_0$ (cm$^2$/dyne) & $10^{-5}$ & \textbf{Permeability} $\boldsymbol \kappa$ (cm$^3$ s/g) & $4 \times 10^{-10} \mathbb{I}$     \\
 \textbf{Slip rate} $\gamma $   (g/cm$^2$ s)& $10^3$   \\
\end{tabular}
\caption{The  parameters used in the problem of Section~\ref{sub:hexagon}.}
\label{example3Par}
\end{table}

As before, the phase-field function equals one in the Stokes region, and zero in the Darcy region. The hexagonal network corresponds to the lymph capillary network in a mouse tail, where the flow is modeled using the Stokes equations. The Darcy region corresponds to the interstitium. We use $\mathbb{P}_2-\mathbb{P}_1$ elements for the Stokes velocity and pressure and $\mathbb{P}_1$ elements for the Darcy pressure. The parameters for this problem are taken from~\cite{leu1994flow,jayathungage2020investigations}, and are summarized in Table~\ref{example3Par}. 

The left channels of the Stokes region are inflow. On each one of them we impose a parabolic velocity profile with maximum velocity of $u_{in}=10^{-4}$, which is consistent with the experimental measurements of the lymphatic capillary flow velocity, see \cite{aukland1993interstitial,roose2012multiscale,wiig2012interstitial}.
The steady-state flow conditions are justified experimentally in \cite{leu1994flow,swartz1999mechanics,happel2012low}.
More precisely, we impose
\[
  \bu = - u_{in} \begin{dcases}
\left( \displaystyle\frac{(y+R)(y-R)}{R^2}, \; 0 \right)^\intercal,  &  -R \leq y \leq R, \\
   \left( \displaystyle\frac{(y+R+s)(y-R+s)}{R^2}, \; 0 \right)^\intercal,  &  -R-s \leq y \leq R-s,
  \end{dcases}
\]
where $R=2 \cdot 10^{-3}$ and $s=2 \cdot 10^{-2}$. One the right boundary of the Stokes domain  we impose zero normal stress for the fluid. For the Darcy problem, we impose $p=0$ on the entire boundary. The problem is solved until a steady state is reached, using the time step of $\Delta t=10^{-3}$. 

\begin{figure}[ht]
  \begin{center}
    \includegraphics[scale=0.6]{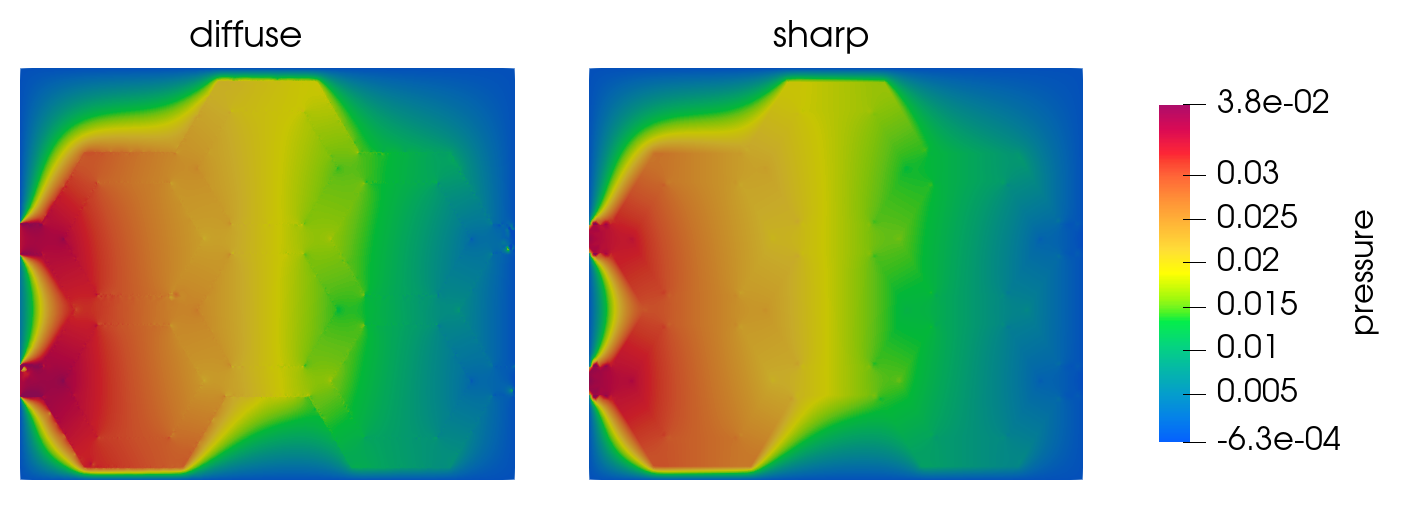}    
  \end{center}
\caption{Results for the problem of Section~\ref{sub:hexagon} at the final time. Left: Total pressure using a diffuse interface. Right: Stokes and Darcy pressures obtained using the sharp interface approach.}
\label{ex3press}
\end{figure}

To regularize the phase field function, we used $\delta=10^{-1}$. We compare the solutions obtained using the diffuse and sharp interface approaches. In Figure~\ref{ex3press} we compare the total pressure obtained using the diffuse interface method with the Stokes and Darcy pressures obtained using the sharp interface approach. An excellent agreement is observed. The velocities are compared in Figure~\ref{ex3vel}. Once again, the diffuse interface solution agrees very well with the sharp interface one. 

\begin{figure}[ht]
  \begin{center}
    \includegraphics[scale=0.6]{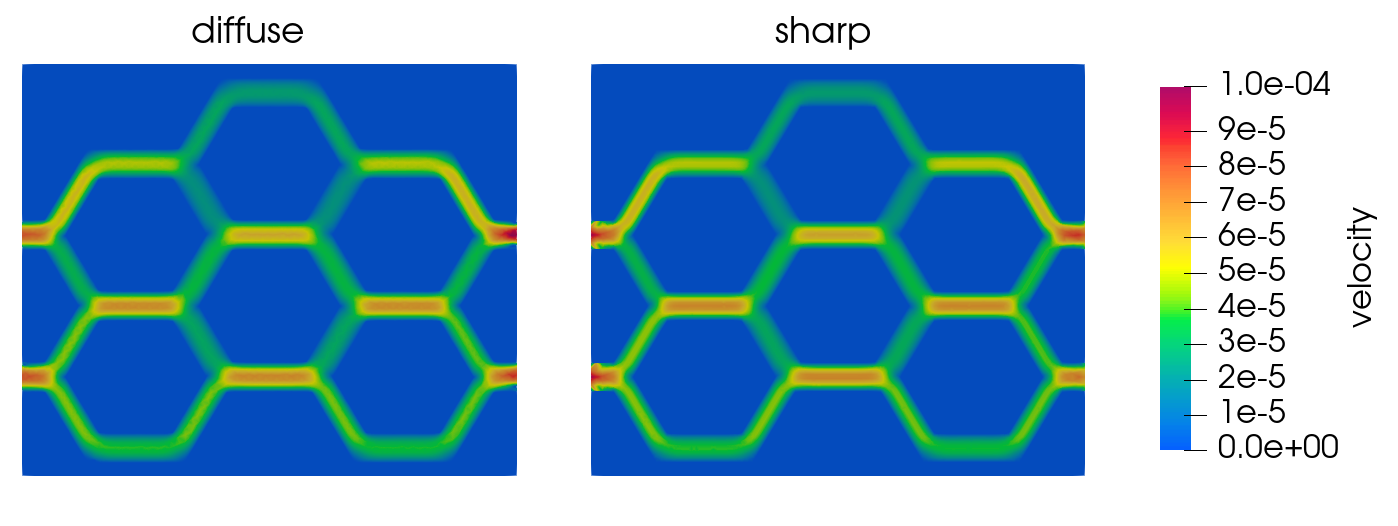}    
  \end{center}
\caption{Results for the problem of Section~\ref{sub:hexagon} at the final time. Left: Total velocity obtained using a diffuse interface. Right: Stokes and Darcy velocity obtained using the sharp interface approach. }
\label{ex3vel}
\end{figure}

\section{Conclusions}

In this work, we  considered the  time dependent Stokes-Darcy coupled problem formulated  using a diffuse interface approach. Our main focus was the convergence analysis of the diffuse interface formulation to the sharp interface one. This analysis consisted of two stages. We separately studied the modeling error resulting from the difference between the sharp and diffuse interface formulations at the continuous level, and the approximation error obtained when analyzing the difference between the continuous and discrete diffuse domain formulations.

Under suitable smoothness assumptions we obtained that the total (i.e., modeling plus discretization) error is of order
\[
  \mathcal{O}( \eps^{1/4} + \Delta t + h ),
\]
in the case the phase field function is a Muckenhoupt weight. We also presented some numerical simulations that show that higher order convergence rates are possible. Motivated by these results we also consider the case of a Lipschitz weight that is regularized with parameter $\delta$, and obtained
\[
  \mathcal{O}( \eps^{3/2} + \delta^{1/2} + \Delta t + h ).
\]
In this case, the numerical results agree with the theoretical predictions. Applications of the method to a problem with a curved interface, and a problem motivated by a lymphatic flow in a mouse tail have been explored numerically, demonstrating  the flexibility of the diffuse interface approach. 

\section{Acknowledgements }

Martina Buka\v{c} is partially supported by the National Science Foundation under grants  DMS-2208219, DMS-2205695, DMS-1912908 and DCSD-1934300. Boris Muha is partially supported by  the Croatian Science Foundation (Hrvatska Zaklada za Znanost) grant number IP-2018-01-3706. Abner J. Salgado is partially supported by the National Science Foundation under grant DMS-2111228.

\bibliographystyle{plain}
\bibliography{bibfile}

\end{document}